\newtheorem{thm}{Theorem}[section]
\newtheorem{lem}[thm]{Lemma}
\newtheorem{prop}[thm]{Proposition}
\newtheorem{bigthm}{Theorem}
\theoremstyle{definition}
\newtheorem*{kohnconj}{Kohn's Conjecture}
\newtheorem{ques}{Question}
\newtheorem{conj}[ques]{Conjecture}
\theoremstyle{remark}
\newtheorem{rem}[thm]{Remark}
\newtheorem{example}[thm]{Example}
\numberwithin{equation}{section}
\newcommand{\norm}[1]{\lVert #1 \rVert^2}
\DeclareMathOperator*{\supp}{supp}
\DeclareMathOperator*{\suppp}{supp^+}
\DeclareMathOperator*{\suppm}{supp^{-}}
\newcommand{\spin}{\ifmmode{\rm Spin}\else{${\rm spin}$\ }\fi}
\newcommand{\spinc}{\ifmmode{{\rm Spin}^c}\else{${\rm spin}^c$\ }\fi}
\newcommand{\spincs}{\mathfrak s}
\begin{document}

\title{Alternating knots with unknotting number one}%
\author{Duncan McCoy\\
{\small University of Glasgow}\\
{\small\texttt{d.mccoy.1@research.gla.ac.uk}}}%
\date{April 2014}
\maketitle
\begin{abstract}
We prove that if an alternating knot has unknotting number one, then there exists an unknotting crossing in any alternating diagram. This is done by showing that the obstruction to unknotting number one developed by Greene in his work on alternating 3-braid knots is sufficient to identify all unknotting number one alternating knots. As a consequence, we also get a converse to the Montesinos trick: an alternating knot has unknotting number one if its branched double cover arises as half-integer surgery on a knot in $S^3$. We also reprove a characterisation of almost-alternating diagrams of the unknot originally due to Tsukamoto.
\end{abstract}
\section{Introduction}
Given a knot $K\subset S^3$, its {\em unknotting number}, $u(K)$, is a classical knot invariant going back to the work of Tait in the 19th century \cite{tait1876knots}. It is defined to be the minimal number of crossing changes required in any diagram of $K$ to obtain the unknot. Upper bounds for the unknotting number are easy to obtain, since one can take some diagram and find a sequence of crossing changes giving the unknot. It is far harder to establish effective lower bounds for the unknotting number, as it is not generally known which diagrams will exhibit the actual unknotting number \cite{bleiler1984note, bernhard1994unknotting, jablan1998unknotting}. One classical lower bound is the signature of a knot as defined by Trotter \cite{trotter1962homology}, which satisfies $|\sigma(K)|\leq 2 u(K)$ \cite{murasugi1965certain}. This is a particularly useful bound, since it may be computed in a variety of ways \cite{trotter1962homology, gordon1978signature}. Other bounds and obstructions have been constructed through the use of various knot-theoretic and topological invariants, including, among others, the Alexander module \cite[Theorem 7.10]{lickorish1997introduction}, the Jones polynomial \cite{stoimenow2004polynomial}, and the intersection form of 4-manifolds \cite{cochran1986unknotting, owens2008unknotting}.

The case of unknotting number one has been particularly well-studied. Recall that a {\em minimal diagram} for a knot is one containing the minimal possible number of crossings. Kohn made the following conjecture regarding unknotting number one knots and their minimal diagrams.

\begin{kohnconj}[Conjecture 12, \cite{kohn1991two}]
If $K$ is a knot with $u(K)=1$, then it has an unknotting crossing in a minimal diagram.
\end{kohnconj}

This has been resolved in a number of cases. The two-bridge knots with unknotting number one were classified by Kanenobu and Murakami \cite{Unknottwobridge}, using the Cyclic Surgery Theorem \cite{cglscyclic}. For alternating large algebraic knots, the conjecture was settled by Gordon and Luecke \cite{gordon2006knots}. Most recently, the conjecture was proved for alternating 3-braid knots by Greene \cite{Greene3Braid}, using a refined version of obstructions first developed from Heegaard Floer homology by  Ozsv{\'a}th and Szab{\'o} \cite{ozsvath2005knots}.

Our main result is the following.

\begin{bigthm}\label{thm:main}
For an alternating knot, $K$, the following are equivalent:
\begin{enumerate}[(i)]
\item $u(K)=1$;
\item The branched double cover, $\Sigma(K)$, can be obtained by half-integer surgery on a knot in $S^3$;
\item $K$ has an unknotting crossing in any alternating diagram.
\end{enumerate}
\end{bigthm}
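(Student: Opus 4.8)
The plan is to establish the cycle of implications (i) $\Rightarrow$ (ii) $\Rightarrow$ (iii) $\Rightarrow$ (i); two of these are quick and all the content lies in the remaining one. For (iii) $\Rightarrow$ (i): an unknotting crossing in a diagram immediately gives $u(K)\le 1$, and $u(K)=1$ because a nontrivial alternating knot is knotted. For (i) $\Rightarrow$ (ii): this is the Montesinos trick --- an unknotting crossing change lifts to a surgery on the knot $\gamma\subset\Sigma(K)$ obtained by lifting the crossing arc, and following this surgery through exhibits $\Sigma(K)$ as half-integer surgery $S^3_{\pm\det(K)/2}(\kappa)$ on a knot $\kappa\subset S^3$. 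Hence everything is in (ii) $\Rightarrow$ (iii), where Greene's obstruction must be shown to be \emph{sharp} for alternating knots.

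For (ii) $\Rightarrow$ (iii) I would fix a reduced alternating diagram $D$ of $K$. The canonical negative-definite plumbing $X_D$ associated to one checkerboard surface bounds $\Sigma(K)$, has intersection lattice the (negated) Goeritz lattice $\Lambda_D$, and is sharp by Ozsv\'ath and Szab\'o. Building the 4-manifold of the half-integer surgery supplied by (ii) and gluing it to $-X_D$ produces a closed definite manifold; Donaldson's theorem forces its form to be diagonal, and Greene's refinement comparing the correction terms upgrades this to a concrete lattice embedding $\Lambda_D\hookrightarrow\mathbb{Z}^{n+1}$ realizing $\Lambda_D$ as the orthogonal complement $\sigma^\perp$ of a changemaker vector $\sigma=(\sigma_0,\dots,\sigma_n)$ with $0\le\sigma_0\le\cdots\le\sigma_n$ and $\sigma_i\le 1+\sum_{j<i}\sigma_j$. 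Crucially, the hypothesis (ii) is diagram-independent while $X_D$ is built from whichever $D$ we choose, so the embedding is available for \emph{every} reduced alternating diagram; it therefore suffices to prove the combinatorial statement that a reduced alternating diagram whose Goeritz lattice is a changemaker lattice contains an unknotting crossing. A nugatory-crossing argument then extends the conclusion from reduced diagrams to arbitrary alternating diagrams.

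To prove that combinatorial statement I would translate the changemaker data into the Tait graph $\mathbb{G}$ of $D$, whose edges correspond to crossings. Via the Montesinos trick a crossing change is surgery on the lift of the crossing arc, so the goal is to locate an edge whose surgery dual realizes the $S^3$-surgery encoded by $\sigma$. The changemaker inequalities rigidly constrain how the standard basis of $\mathbb{Z}^{n+1}$ meets $\Lambda_D$, and hence the combinatorics of $\mathbb{G}$; the plan is to classify the resulting families of Goeritz--Tait structures and, in each, to exhibit a distinguished coordinate of $\sigma$ whose associated edge is an unknotting crossing --- equivalently, a crossing whose change leaves an almost-alternating diagram of the unknot. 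Carrying this out should simultaneously reprove Tsukamoto's characterization of such diagrams.

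The main obstacle is precisely this realization step: passing from the algebraic changemaker embedding, which is only an inequality on an ordered integer vector, to the concrete planar feature of an unknotting crossing. Condition (ii) provides the surgery dual only abstractly inside $\Sigma(K)$, whereas an unknotting crossing must be visible as an edge of $\mathbb{G}$, so the classification has to track not merely the isomorphism type of $\Lambda_D$ but how that lattice sits over the planar graph. I expect the bulk of the work to be a finite but intricate case analysis of changemaker lattices, verifying in each case that the distinguished vector is induced by a genuine crossing arc, with the recurring difficulty being control over the inequivalent Tait graphs that can realize a single changemaker lattice.
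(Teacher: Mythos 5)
Your reduction to the single implication (ii) $\Rightarrow$ (iii), with (i) $\Rightarrow$ (ii) by the Montesinos trick and (iii) $\Rightarrow$ (i) trivial, is exactly the paper's structure, as is the observation that the change-maker embedding is available for every reduced alternating diagram simultaneously. But the heart of the argument --- producing an unknotting crossing from the change-maker embedding --- is left as a programme rather than a proof, and the programme as stated would not go through. First, you have written down the change-maker lattice for integer surgery, $\sigma^{\perp}\subseteq\mathbb{Z}^{n+1}$; the half-integer version that condition (ii) actually delivers is $\langle\rho,\sigma\rangle^{\perp}\subseteq\mathbb{Z}^{r+2}$ with $\rho=e_{-1}-e_{0}$, and this extra vector is not a technicality: the distinguished pair of coordinates $e_{-1},e_{0}$ is precisely what singles out a candidate crossing. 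The paper defines a crossing to be \emph{marked} when it lies between the unique white region whose label pairs positively with $e_{0}$ and the unique one pairing negatively, and proves such a crossing exists; there is no analogous canonical choice in the integer-surgery picture you describe, so ``exhibit a distinguished coordinate of $\sigma$ whose associated edge is an unknotting crossing'' has no starting point as written.

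Second, ``a finite but intricate case analysis of change-maker lattices'' cannot suffice, because change-maker lattices of every rank occur and the hard case (a single marked crossing) genuinely requires an induction on rank. Changing the marked crossing yields a knot of determinant $1$, but determinant $1$ does not certify the unknot; the paper instead flypes the diagram into a standard form (detecting the flypes algebraically from the lattice, Lemma~\ref{lem:flype}), shows that after the crossing change an untongue or untwirl move is available, and then proves that the resulting smaller almost-alternating diagram again arises from changing a marked crossing in a reduced alternating diagram whose Goeritz lattice is a change-maker lattice of strictly smaller rank (Lemma~\ref{lem:inductstep}). That descent step --- re-embedding the smaller lattice so the induction can close --- is the key idea missing from your outline. (Minor points: the checkerboard 4-manifold is taken positive-definite here, and the passage from reduced to arbitrary alternating diagrams is immediate since a nugatory crossing is never an unknotting crossing.)
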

Since the minimal diagrams of alternating knots are alternating, it resolves Kohn's conjecture for alternating knots.

\paragraph{}In general, Kohn's Conjecture seems somewhat optimistic. For example, there are 14-crossing knots with unknotting number one with minimal diagrams not containing an unknotting crossing \cite{stoimenow2001examples}. However these examples are not sufficient to disprove the conjecture, since they do have some minimal diagram with an unknotting crossing.

\paragraph{}An {\em almost-alternating diagram} is one which is obtained by a single crossing change in an alternating diagram. Theorem~\ref{thm:main} can be interpreted as showing that understanding alternating knots with unknotting number one is equivalent to understanding almost-alternating diagrams of the unknot. The methods in this paper allow us to give a proof of a characterisation of almost alternating diagrams of the unknot originally due to Tsukamoto \cite{tsukamoto2009almost}. A {\em reduced} diagram is one not containing any {\em nugatory crossings} (See Figure~\ref{fig:nugatory}).
 \begin{figure}[h]
  \centering
  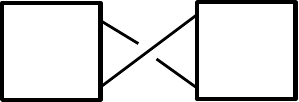
 \caption{A nugatory crossing}
 \label{fig:nugatory}
\end{figure}
Any reduced almost-alternating diagram of the unknot can be built up using only certain types of isotopies: {\em flypes} are illustrated in Figure~\ref{fig:flypedef}; and {\em tongue} and {\em twirl} moves, which are the inverses of the {\em untongue} and {\em untwirl} moves depicted in Figure~\ref{fig:unswirluntongue}. The basic almost-alternating diagrams from which all others are built are shown in Figure~\ref{fig:claspdiagram}, and denoted by $\mathcal{C}_m$.

\begin{bigthm}\cite[Corollary 1.1]{tsukamoto2009almost}\label{thm:tsukamoto}
Any reduced almost-alternating diagram of the unknot can be obtained from $\mathcal{C}_m$ for some non-zero integer $m$, by a sequence of flypes, tongue moves and twirl moves.
\end{bigthm}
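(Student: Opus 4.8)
The plan is to route the statement through the unknotting-number-one machinery of Theorem~\ref{thm:main} and then run an induction on crossing number. Let $D$ be a reduced almost-alternating diagram of the unknot, with distinguished crossing $c$ (the crossing whose change turns $D$ into an alternating diagram). Changing $c$ produces an alternating diagram $D'$ of a knot $K$, and by construction $c$ is an unknotting crossing for $K$, so $u(K)\le 1$. The Montesinos trick then exhibits the branched double cover $\Sigma(K)$ as half-integer surgery on a knot in $\Sigma(\text{unknot})=S^3$. Thus $D'$ lands exactly in the situation analysed for Theorem~\ref{thm:main}, and I would import its structural conclusions rather than starting over. Note that the crossing change does not alter the crossing number, so an induction on the crossings of $D$ is the same as an induction on those of $D'$.

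First I would record the lattice-theoretic input. Since $D'$ is alternating, $\Sigma(K)$ bounds a sharp definite four-manifold whose intersection form is the Goeritz lattice $\Lambda_{D'}$; combining this with the half-integer surgery description forces $\Lambda_{D'}$ to embed as a \emph{changemaker lattice}, that is, as the orthogonal complement of a changemaker vector $\sigma$ inside a standard integer lattice $\mathbb{Z}^{N+1}$, exactly as in Greene's obstruction. Because $\Lambda_{D'}$ is the graph lattice of a checkerboard graph of a planar alternating diagram, the interaction between ``being a graph lattice'' and ``being a changemaker lattice'' is extremely rigid, and this rigidity is what one leverages. The first genuine task is to read off, from the admissible vectors $\sigma$, a normal form for the pair $(D',c)$, equivalently for $D$.

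The heart of the argument is then the induction. I would show that the combinatorial data of the changemaker embedding splits into two cases. In the base case the vector $\sigma$, and hence the checkerboard graph, is of the simplest shape, and one identifies $D$ directly, up to flypes, with a clasp diagram $\mathcal{C}_m$. Otherwise the embedding must contain a distinguished extremal feature (a tight changemaker entry, or a repeated standard basis vector), and I would translate this feature—after normalising $D$ by a sequence of flypes—into a visible tongue or twirl. Performing the corresponding untongue or untwirl move yields a strictly smaller reduced almost-alternating diagram of the unknot, to which the inductive hypothesis applies; reversing the move (a tongue or twirl, together with flypes) rebuilds $D$, closing the step.

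The main obstacle, and where essentially all the work concentrates, is the dictionary between the algebra and the diagrammatics: one must verify that every changemaker lattice arising here is realised by an honest diagram, that the extremal feature of the embedding always corresponds to a genuine tongue or twirl so that the reducing move can actually be carried out and strictly lowers the crossing number, and that the residual ambiguity in the embedding is accounted for exactly by flypes. Care is also needed to ensure the reducing moves preserve both reducedness and the almost-alternating-unknot property, to pass cleanly between reduced and possibly non-reduced alternating diagrams $D'$ (where nugatory crossings contribute only unimodular summands to $\Lambda_{D'}$), and to confirm that the base cases are exhausted precisely by the family $\{\mathcal{C}_m\}$ over nonzero integers $m$. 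Managing these compatibility issues, rather than any single hard computation, is what makes the reduction go through.
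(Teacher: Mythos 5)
Your overall architecture agrees with the paper's: change the distinguished crossing $c$ to get an alternating diagram $D'$ of a knot $K$ with $u(K)=1$, apply the Montesinos trick and Greene's obstruction to realise $\Lambda_{D'}$ as a change-maker lattice, and then induct on size by locating an untongue or untwirl move after suitable flypes (this is exactly Proposition~\ref{prop:singlecrossingsummary} together with Lemmas~\ref{lem:manymarkedcrossings} and \ref{lem:inductstep}). However, there is a genuine gap at the point where you pass from ``$\Lambda_{D'}$ embeds as a change-maker lattice'' to ``a normal form for the pair $(D',c)$.'' A change-maker embedding singles out a \emph{marked} crossing of $D'$ (the crossing between the two regions whose labels have nonzero $e_0$-coefficient), and the entire structural analysis --- standard form, Situations A1/A2/B, the visible tongue or twirl --- is anchored at that marked crossing, not at your given crossing $c$. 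If $D'$ has more than one unknotting crossing, nothing guarantees that $c$ is marked for the particular embedding Greene's theorem hands you; the reducing move you extract would then sit at the wrong crossing and produce a different almost-alternating diagram than $D$, so the inductive step does not close. As written, your argument only treats those almost-alternating unknot diagrams obtained by changing a \emph{marked} crossing.

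Closing this gap is precisely the content of Lemma~\ref{lem:unknottingismarked}, and it is not a formality. The paper proves it by stabilising: append $n$ twirls at $c$ to form $D^{(n)}$, show by explicit determinant and recurrence estimates on Goeritz matrices (Lemma~\ref{lem:manyswirls}) that for $n$ large the new crossing is the \emph{unique} unknotting crossing of $D^{(n)}$ --- forcing the marked crossing of any change-maker embedding of $\Lambda_{D^{(n)}}$ to be that crossing --- and then transport the embedding back down through the twirls via the modification of Lemma~\ref{lem:inductstep} to exhibit $c$ itself as a marked crossing of some embedding of $\Lambda_{D'}$. Your remark that ``the residual ambiguity in the embedding is accounted for exactly by flypes'' does not address this: the issue is not flype-ambiguity of one embedding, but the existence, for each unknotting crossing separately, of a possibly different change-maker embedding detecting it. Until that is supplied, the proposal proves Tsukamoto's theorem only for the subfamily of diagrams arising from marked crossings.
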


Our proof is of very different flavour to the original which employed spanning surfaces and geometric arguments, rather than the machinery of Heegaard Floer homology and the topology of 4-manifolds. Together Theorem~\ref{thm:main} and Theorem~\ref{thm:tsukamoto} may be viewed as a complete description of alternating knots with unknotting number one.

\begin{figure}[h!]
  \centering
  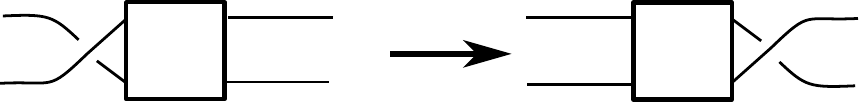
 \caption{A flype. It is known that any two reduced alternating diagrams of a knot can be related by a sequence of flypes \cite{Menasco93classification}.}
 \label{fig:flypedef}
\end{figure}

\begin{figure}[h!]
  \centering
  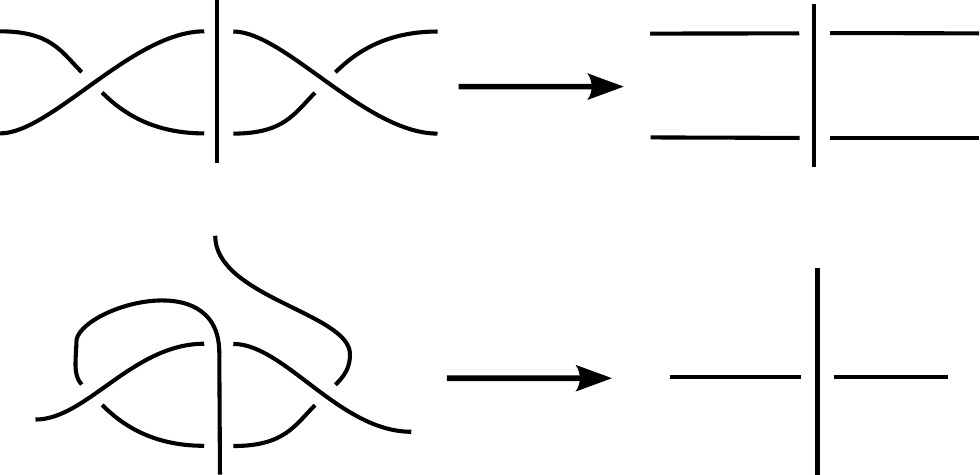
 \caption{An untongue move (top) and an untwirl (bottom). Their inverses are known as tongue and twirl moves respectively.}
 \label{fig:unswirluntongue}
\end{figure}

\begin{figure}[h!]
  \centering
  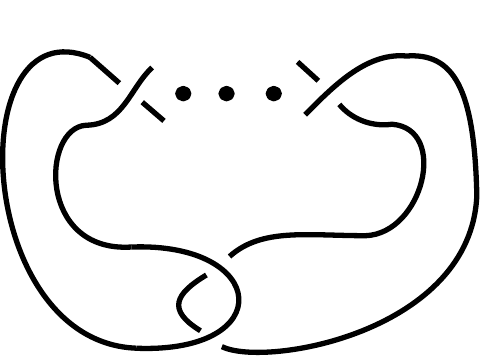
 \caption{The diagram $\mathcal{C}_m$, where the $|m|$ crossings are positive if $m>0$ and negative otherwise.}
 \label{fig:claspdiagram}
\end{figure}

\subsection{Outline of the proof}
The implication $(i)\Rightarrow (ii)$ is the Montesinos trick \cite{montesinos1973variedades} and $(iii)\Rightarrow (i)$ is trivial. We need to establish $(ii)\Rightarrow (iii)$. The minimal diagrams of an alternating knot are precisely the reduced alternating ones \cite{kauffman87state, murasugi86jones, thistlethwaite88alternating}. Thus it suffices to consider only reduced alternating diagrams, since a nugatory crossing cannot be an unknotting crossing. The starting point will be the obstruction to half-integer surgery of Greene, \cite[Theorem 4.5]{Greene3Braid}, which is a combination of Donaldson's Theorem A \cite{donaldson1983application} with restrictions arising from the $d$-invariants of Heegaard Floer theory. The version we will use is stated here as Theorem~\ref{thm:GreeneLspaceversion}. It shows that if the branched double cover of an alternating knot, $K$, arises as half-integer surgery on a knot in $S^3$, then, up to reflection of $K$, the lattice associated to the Goeritz matrix of an alternating diagram is a change-maker lattice. Change-maker lattices will be fully defined in Section~\ref{sec:CMlattices}. We will use the change-maker condition to establish the existence of an unknotting crossing.

\paragraph{}Taking further inspiration from Greene \cite{Greene3Braid}, the key definition we develop is that of a marked crossing, which we will describe briefly here. A change-maker lattice, $L=\langle \sigma, \rho \rangle^{\bot} \subseteq \mathbb{Z}^{r+2}$, is an orthogonal complement of two vectors, $\sigma$ and $\rho$, where $\sigma$ satisfies the change-maker condition and $\norm{\rho}=2$. If $\{e_{-1}, \dotsc, e_r\}$ is an orthonormal basis for $\mathbb{Z}^{r+2}$, then we can we can assume $\rho=e_0-e_{-1}$. Thus, any $v\in L$ satisfies $v\cdot e_0=v\cdot e_{-1}$.
Given an alternating diagram, $D$, with a chessboard colouring, let $\Lambda_D$ be the lattice defined by the associated Goeritz matrix. An isomorphism between $\Lambda_D$ and $L$ allows us to label each unshaded region of $D$ with a vector in $L$. We say that a crossing $c$ is {\em marked} if it occurs between regions labeled by vectors of the form $v_1+e_0+e_{-1}$ and $v_2-e_0-e_{-1}$, with $v_1\cdot e_0=v_2\cdot e_0=0$, as illustrated in Figure~\ref{fig:markedcrossing}.

\begin{figure}[h!]
  \centering
  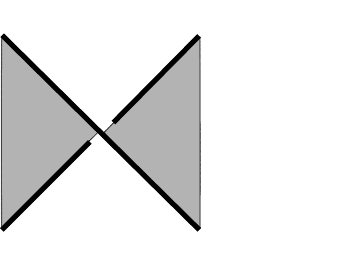
 \caption{A marked crossing.}
 \label{fig:markedcrossing}
\end{figure}
To complete the proof of the implication $(ii)\Rightarrow (iii)$, we show that marked crossings exist and that they are unknotting crossings.

\begin{bigthm}\label{thm:markedmeansunknotting}
Let $D$ be a reduced alternating diagram, and suppose that the lattice $\Lambda_D$ can be embedded into $\mathbb{Z}^{r+2}$ as a change-maker lattice. Then there is at least one marked crossing in $D$, and any marked crossing is an unknotting crossing.
\end{bigthm}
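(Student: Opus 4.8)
The plan is to prove Theorem~\ref{thm:markedmeansunknotting} in two logically separate parts: an \emph{existence} statement (every such diagram has a marked crossing) and an \emph{interpretation} statement (any marked crossing is an unknotting crossing). I expect the existence part to be the genuine combinatorial heart of the argument, while the second part should follow from a careful translation between the lattice picture and the topology of the Montesinos trick.

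\paragraph{Setting up the dictionary.} First I would fix an isomorphism $\Lambda_D \cong L = \langle\sigma,\rho\rangle^\perp \subseteq \mathbb{Z}^{r+2}$ and use it to label the unshaded regions of $D$ by vectors in $L$, exactly as in the outline. The Goeritz form encodes the incidence of unshaded regions across crossings, so a crossing between regions labelled $x$ and $y$ contributes in a controlled way to the quadratic form; in particular the standard basis vectors of $\Lambda_D$ (associated to the regions) should map to vectors whose pairwise inner products record the combinatorics of $D$. The key structural input I would extract from the change-maker condition is that $\sigma = (\sigma_0,\dots,\sigma_r)$ has $\sigma_0 = 0$ or $\sigma_0 = 1$ and the entries satisfy the recursive ``change-making'' inequalities; together with $\rho = e_0 - e_{-1}$ this forces the vectors realising the region-labels to have a rigid shape. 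The marked-crossing pattern $v_1 + e_0 + e_{-1}$ and $v_2 - e_0 - e_{-1}$ (with $v_i\cdot e_0 = 0$) is precisely the configuration where the two $\rho$-coordinates $e_0,e_{-1}$ appear with full weight and opposite sign, so I want to show such a pair of labels must occur among adjacent regions.

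\paragraph{Existence of a marked crossing.} This is where I expect the main obstacle to lie. The strategy I would pursue is to locate the standard basis vectors of $\mathbb{Z}^{r+2}$, or small combinations of them, inside the lattice $L$ and then argue that the element $e_0 + e_{-1}$ (which pairs trivially with every $v$ satisfying $v\cdot e_0 = v\cdot e_{-1}$, i.e.\ with all of $L$) must be realised geometrically by a crossing of $D$. Concretely, change-maker lattices come equipped with a distinguished ``standard basis'' of vectors $v_i$ (of the form $e_i - \sum \epsilon_j e_j$ reflecting the change-maker expansions), and I would examine how these are distributed among the regions of the planar diagram. The planarity of $D$ and the fact that $\Lambda_D$ is the Goeritz lattice of a \emph{connected reduced alternating} diagram should guarantee that adjacent regions correspond to vectors differing by something supported on the crossing, forcing at least one adjacency to realise the $\pm(e_0+e_{-1})$ jump. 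I anticipate needing a minimality or extremality argument—choosing a region whose label is extremal with respect to the $e_0$-coordinate, and showing its neighbours across crossings must include one with the complementary sign—to pin the marked crossing down; ruling out degenerate configurations (where the extremal region is a nugatory one, excluded by the reducedness hypothesis) will require care.

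\paragraph{Marked crossings are unknotting crossings.} For the second half I would show that changing a marked crossing $c$ produces a diagram $D'$ whose branched double cover $\Sigma(D')$ is again a lens space (indeed $S^3$), which by the classification of alternating knots and the Montesinos trick identifies $K$ as having $u=1$ with $c$ the unknotting crossing. At the lattice level, changing the crossing $c$ between the marked regions corresponds to surgering the quadratic form along the vector $e_0 + e_{-1}$: passing from $L$ to the lattice obtained by deleting the $\rho$-direction should yield the diagonal lattice $\mathbb{Z}^{r}$ (or the Goeritz lattice of an unknot diagram), which is exactly the signature of the branched double cover becoming $S^3$. The cleanest route is probably to verify directly that the Goeritz matrix of $D'$ presents a trivial (or $\pm 1$-determinant) form, so $|\det| = |H_1(\Sigma(D'))| = 1$; combined with $\Sigma(K)$ arising from half-integer surgery, the Montesinos trick then says the crossing change realises the surgery dual, giving the unknot. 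The main subtlety here is bookkeeping the effect of the crossing change on the Goeritz matrix and confirming it matches the lattice operation $L \mapsto \langle\sigma\rangle^\perp$; once that correspondence is nailed down, the conclusion that $c$ is an unknotting crossing is immediate.
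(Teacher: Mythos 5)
The existence half of your plan is broadly in the spirit of the paper's argument, though the details matter: the paper first proves (via the inequality $(x-z)\cdot z\leq 0$ for sums of vertices, applied to carefully chosen $z\in L$) that exactly one region label has $e_0$-coefficient $+1$ and exactly one has $-1$, and then shows these two ``marker'' regions actually share a crossing. That adjacency is not automatic from planarity or extremality: the paper has to flype $D$ so that one marker vertex becomes literally $-e_1+e_0+e_{-1}$, after which the pairing $v\cdot w=-2-w\cdot e_1<0$ (using the bound $|w\cdot e_1|\leq 1$) forces an edge between the marker vertices. Your extremal-coordinate heuristic would need to be developed into something of this shape, but this is the easier half and your instincts here are reasonable.

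The genuine gap is in the second half. You correctly anticipate that changing a marked crossing yields an almost-alternating diagram $D'$ with determinant $1$ (the paper proves exactly this, exhibiting the Goeritz matrix of $D'$ as $CC^T$ with $C$ unimodular). But $\det(K')=1$ only says $\Sigma(K')$ is an integer homology sphere, not $S^3$, and there are many non-trivial knots of determinant one, so this does not identify $K'$ as the unknot. Moreover, there is no ``converse Montesinos trick'' to invoke: the Montesinos trick runs from $u(K)=1$ to the surgery description, and establishing the converse for alternating knots is precisely what the theorem is for — so your last step is circular. The paper closes this gap with the bulk of its technical work. When there is more than one marked crossing, it shows the white graph of the alternating diagram obtained after a Reidemeister II move must be a path, so $D'$ is one of the clasp diagrams $\mathcal{C}_m$ and $K'$ is visibly unknotted. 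When there is a single marked crossing, it flypes $D$ into a standard form (Situations A1, A2, B of Proposition~\ref{prop:singlecrossingsummary}), performs an untongue or untwirl move on $D'$, and proves (Lemma~\ref{lem:inductstep}) that the result comes from changing a marked crossing in a strictly smaller reduced alternating diagram whose Goeritz lattice is again a change-maker lattice; the conclusion then follows by induction on the rank, with the trefoil as base case. Without a substitute for this induction, the proposal does not establish that marked crossings are unknotting crossings.
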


We establish Theorem~\ref{thm:markedmeansunknotting} by a series of careful arguments which exploit the fact that $\Lambda_D$ is both a graph lattice and a change-maker lattice. The necessary features of graph and change-maker lattices are explored in Sections~\ref{sec:graphlattices} and \ref{sec:CMlattices}, respectively. By combining properties of these two types of lattice we are able to place strong restrictions on the embedding of $\Lambda_D$ into $\mathbb{Z}^{r+2}$. Using Lemma~\ref{lem:flype}, we will use the properties of $\Lambda_D$ to deduce the existence of flypes in $D$ algebraically. In particular, we will find a sequence of flypes fixing the marked crossings which give a diagram, $\hat{D}$, with a nice embedding of $\Lambda_{\hat{D}}$ into $\mathbb{Z}^{r+2}$. Since flypes preserve unknotting crossings, we can use properties of $\hat{D}$ to prove Theorem~\ref{thm:markedmeansunknotting} for the original diagram $D$.

The existence of marked crossings is established in Section~\ref{sec:markedcrossings}, where we also show that there are only ever two regions between which marked crossings can be found. Let $D'$ be the almost-alternating diagram obtained by changing a marked crossing and let $K'$ be the corresponding knot. It can be shown that $K'$ has determinant 1. If there is more than one marked crossing, this will be sufficient to show that $K'$ is the unknot. In the case of a general alternating knot, we must work harder to show that $K'$ is the unknot. The motivation for this part of the proof comes from the results of Tsukamoto \cite{tsukamoto2009almost}.

In Section~\ref{sec:singlemarkedc}, we show there is a sequence of flypes to a diagram which admits an untongue or untwirl move after changing the marked crossing. Changing the marked crossing and performing this move gives a diagram $\widetilde{D}'$ for $K'$. We prove inductively that $K'$ is the unknot by showing that $\widetilde{D}'$ can be obtained by changing a marked crossing in a smaller alternating knot diagram $\widetilde{D}$. This is the work of Section~\ref{sec:smallerdiagram}.

The programme outlined above provides a proof of the following result from which Theorem~\ref{thm:main} easily follows.
\begin{bigthm}\label{thm:technical}
Let $K$ be an alternating knot with reduced alternating diagram $D$ and $\det K =d$. The following are equivalent:
\begin{enumerate}[(i)]
\item $K$ can be unknotted by changing a negative crossing if $\sigma(K)=0$ or positive crossing if $\sigma(K)=-2$;
\item there is a knot $\kappa \subset S^3$ such that the branched double cover $\Sigma(K)$ arises as the half-integer surgery $S^3_{-\frac{d}{2}}(\kappa)$;
\item the Goeritz lattice $\Lambda_D$ is isomorphic to a change-maker lattice;
\item there is an unknotting crossing in $D$ which is negative if $\sigma(K)=0$ and positive crossing if $\sigma(K)=-2$.
\end{enumerate}
\end{bigthm}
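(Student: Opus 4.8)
The plan is to prove the four statements equivalent by establishing a cycle of implications, drawing the individual links from the results already assembled in the excerpt. The implication $(i)\Rightarrow(ii)$ is the Montesinos trick \cite{montesinos1973variedades}: a crossing change unknotting $K$ exhibits $\Sigma(K)$ as half-integer surgery on a knot, and the sign condition on the crossing together with the sign of $\sigma(K)$ is exactly what pins down the surgery coefficient to be $-\tfrac{d}{2}$ rather than $+\tfrac{d}{2}$. I would verify the sign bookkeeping by recalling that a reduced alternating diagram has a definite Goeritz form whose signature is governed by $\sigma(K)$, so the two cases $\sigma(K)=0$ and $\sigma(K)=-2$ correspond to changing a negative versus a positive crossing. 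The implication $(ii)\Rightarrow(iii)$ is precisely Greene's obstruction, quoted as Theorem~\ref{thm:GreeneLspaceversion}: if $\Sigma(K)$ arises as half-integer surgery, then (up to reflection, which is controlled by the same signature dichotomy) the Goeritz lattice $\Lambda_D$ embeds as a change-maker lattice. The implication $(iii)\Rightarrow(iv)$ is the content of Theorem~\ref{thm:markedmeansunknotting}: a change-maker structure on $\Lambda_D$ forces the existence of a marked crossing, and every marked crossing is an unknotting crossing. Finally $(iv)\Rightarrow(i)$ is immediate once one checks that the unknotting crossing produced has the asserted sign, which again follows from the signature convention.

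The heart of the argument, and the step I expect to be the main obstacle, is $(iii)\Rightarrow(iv)$, i.e.\ Theorem~\ref{thm:markedmeansunknotting}, which the excerpt signals is proved only through the long program of Sections~\ref{sec:markedcrossings}--\ref{sec:smallerdiagram}. Granting that theorem, the real work remaining in Theorem~\ref{thm:technical} is to thread the signature conditions consistently through every implication so that the sign of the crossing, the sign of the surgery coefficient $-\tfrac{d}{2}$, and the choice of reflection of $K$ all agree. My approach would be to fix once and for all a chessboard colouring and orientation convention for the reduced alternating diagram $D$, compute $\sigma(K)$ from the Goeritz form in that convention, and record in a single lemma how the definiteness of $\Lambda_D$ and its reflection interact with the sign of a changed crossing. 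With that bookkeeping lemma in hand, each of the four implications reduces to citing the corresponding earlier result and matching signs.

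One subtlety I would be careful about is that Greene's theorem delivers the change-maker condition only \emph{up to reflection} of $K$, whereas statements $(i)$ and $(iv)$ of Theorem~\ref{thm:technical} make an asymmetric claim tied to the sign of $\sigma(K)$. The resolution is that reflecting $K$ negates $\sigma(K)$ and swaps positive and negative crossings simultaneously, so the two cases $\sigma(K)=0$ and $\sigma(K)=-2$ are interchanged under reflection; thus the ambiguity in Greene's result is exactly absorbed by allowing both cases in the statement. I would make this explicit, noting that for a reduced alternating diagram the only possible values relevant here are $\sigma(K)\in\{0,-2\}$ after fixing the reflection that makes $\Lambda_D$ the appropriate definite lattice. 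Finally, to deduce Theorem~\ref{thm:main} I would observe that its clause~$(i)$ discards the sign information, so it follows from Theorem~\ref{thm:technical} simply by forgetting the signs: $u(K)=1$ holds iff some crossing change unknots $K$, and Theorem~\ref{thm:technical} locates such a crossing inside $D$ itself, which is what clause~$(iii)$ of Theorem~\ref{thm:main} asserts for an arbitrary alternating diagram once flype-invariance of unknotting crossings is invoked.
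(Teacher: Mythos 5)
Your proposal follows the same cycle of implications $(i)\Rightarrow(ii)\Rightarrow(iii)\Rightarrow(iv)\Rightarrow(i)$ as the paper, citing the Montesinos trick for the first link, Greene's obstruction for the second, Theorem~\ref{thm:markedmeansunknotting} (plus a signature check via Lemma~\ref{lem:manymarkedcrossings}) for the third, and triviality for the fourth, so the approach is essentially identical. The one point you gloss over is that $(ii)\Rightarrow(iii)$ is not quite a direct quotation of Theorem~\ref{thm:GreeneLspaceversion}: one must first verify its hypotheses, namely that $\Sigma(K)$ is an $L$-space bounding a sharp, simply-connected, positive-definite $4$-manifold with intersection form $\Lambda_D$ (Ozsv\'ath--Szab\'o) and that the $d$-invariant condition on the spin structure is satisfied (Lemma~\ref{lem:deficiencies}).
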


The proof of Theorem~\ref{thm:markedmeansunknotting} shows that the almost-alternating diagram of the unknot obtained by changing a marked crossing can be reduced by flypes, untongue and untwirl moves to one of the $\mathcal{C}_m$. Thus when we show in Section~\ref{sec:aadiagrams} that any unknotting crossing can be made into a marked crossing for some embedding of the Goeritz form as a change-maker lattice, we obtain a proof of Theorem~\ref{thm:tsukamoto}.

\paragraph{} In Section~\ref{sec:furtherqus}, we comment briefly on the implications of this work and
some of the further questions it raises.

\subsection{Acknowledgements}
The author would like to thank his supervisor, Brendan Owens, for his patient and careful reading of this paper, and for providing a great deal of useful advice. He is grateful to Liam Watson for helpful discussions and for suggesting Example~\ref{exam:montyconverse}. He wishes to thank Joshua Greene for helpful correspondence and acknowledge the influential role of his work, in particular \cite{Greene3Braid} and \cite{GreeneLRP}, which provided inspiration and motivation for many of the ideas in this work. He would also like to thank Tara Brendle and Ana Lecuona for helpful comments on this paper.

\section{Lattices}\label{sec:lattices}
In this section, we develop the lattice theoretic results on which the proofs of the main results rely. The two types of lattice we will require are graph lattices and change-maker lattices, which are discussed in Sections~\ref{sec:graphlattices} and \ref{sec:CMlattices} respectively.

\paragraph{}A {\em positive-definite integral lattice}, $L$, is a finitely-generated free abelian group with a positive-definite, symmetric form $L\times L \rightarrow \mathbb{Z}$, given by $(x,y)\mapsto x\cdot y$. All lattices occurring in this paper will be of this form. We say that $L$ is {\em indecomposable} if it cannot be written as an orthogonal direct sum $L= L_1 \oplus L_2$ with $L_1,L_2$ non-zero sublattices.

For $v \in L$, its {\em norm} is the quantity $\norm{v}=v\cdot v$. We say that $v$ is {\em irreducible} if we cannot find $x,y \in L \setminus \{0\}$ such that $v=x+y$ and $x\cdot y \geq 0$.

\subsection{Graph lattices}\label{sec:graphlattices}
In this section, we collect the necessary results on graph lattices. The material in this section is largely based on that of Greene \cite[Section 3.2]{GreeneLRP}. There has, however, been some reworking since we find it convenient to avoid using the concept of a root vertex.

Let $G=(V,E)$ be a finite, connected, undirected graph with no self-loops. For a pair of disjoint subsets $R,S \subset V$, let $E(R,S)$ be the set of edges between $R$ and $S$. Define $e(R,S)=|E(R,S)|$. We will use the notation $d(R)=e(R,V\setminus R)$.

\paragraph{} Let $\overline{\Lambda}(G)$ be the free abelian group generated by $v\in V$. Define a symmetric bilinear form on $\overline{\Lambda}(G)$ by
\[
v\cdot w =
  \begin{cases}
   d(v)            & \text{if } v=w \\
   -e(v,w)       & \text{if } v\ne w.
  \end{cases}
\]
In this section we will use the notation $[R]=\sum_{v\in R}v$, for $R\subseteq V$. The above definition gives
\[
v\cdot [R] =
  \begin{cases}
   -e(v,R)            & \text{if } v\notin R \\
   e(v,V\setminus R)       & \text{if } v\in R.
  \end{cases}
\]

From this it follows that $[V]\cdot x= 0$ for all $x \in \overline{\Lambda}(G)$. We define the {\em graph lattice} of $G$ to be
$$\Lambda(G):= \frac{\overline{\Lambda}(G)}{\mathbb{Z}[V]}.$$
The bilinear form on $\overline{\Lambda}(G)$ descends to $\Lambda(G)$. Since we have assumed that $G$ is connected, the pairing on $\Lambda(G)$ is positive-definite. This makes $\Lambda(G)$ into an integral lattice. Henceforth, we will abuse notation by using $v$ to denote both the vertex $v \in V$ and its image in $\Lambda(G)$.

\paragraph{}We compute the product of arbitrary $x,y \in \Lambda(G)$. Let $x=\sum_{v\in V} b_v v$ and $y=\sum_{v\in V} c_v v$ be elements of $\Lambda(G)$. For any $v\in V$,
$$v\cdot y=c_v d(v) - \sum_{u\ne v} e(v,u)c_u = \sum_{u \in V} (c_v-c_u)e(v,u).$$
Therefore,
$$x \cdot y = \sum_{v \in V}b_v \sum_{u\in V}(c_v-c_u)e(v,u)=\sum_{u,v \in V}b_v(c_v-c_u)e(v,u).$$
Since we also have
$$x \cdot y =\sum_{u,v \in V}c_u(b_u-b_v)e(v,u),$$
we can express the pairing $x\cdot y$ as
\begin{equation}\label{eq:prod1}
x \cdot y = \frac{1}{2}\sum_{u,v \in V}(c_v-c_u)(b_v-b_u)e(v,u).
\end{equation}
If $y=[R]-x$ for some $R\subseteq V$, then
\[
c_v =
  \begin{cases}
   1-b_v            & \text{if } v\in R \\
   -b_v       & \text{if } v\notin R.
  \end{cases}
\]
and \eqref{eq:prod1} shows

\begin{equation}\label{eq:usefulformula}
([R]-x)\cdot x =
\sum_{u\in R, v\in V\setminus R}b_{v,u} (1-b_{v,u})e(v,u)
-\frac{1}{2}\sum_{u,v\in R}b_{v,u}^2 e(v,u)
-\frac{1}{2}\sum_{u,v \in V\setminus R}b_{v,u}^2 e(v,u),
\end{equation}
where $b_{v,u}=b_v-b_u$. Examining each term in \eqref{eq:usefulformula} individually, we see that the right hand side is at most zero. This inequality will be used so often that we will record it as the following lemma.
\begin{lem}\label{lem:usefulbound}
Let $x=[R]$ be a sum of vertices, then for any $z\in \Lambda(G)$, we have
$$(x-z)\cdot z\leq 0.$$
\end{lem}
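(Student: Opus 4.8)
The plan is to read the claim directly off the identity \eqref{eq:usefulformula}, which was derived for an arbitrary element of $\Lambda(G)$. Writing $z=\sum_{v\in V}b_v v$ with integer coefficients $b_v$ and taking $x=[R]$, the quantity $(x-z)\cdot z=([R]-z)\cdot z$ is exactly the left-hand side of \eqref{eq:usefulformula}, with $z$ now playing the role of the arbitrary element called $x$ there. So it suffices to show that the three sums on the right-hand side of \eqref{eq:usefulformula} are each nonpositive.

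First I would dispose of the last two sums. Each is the negative of a sum of terms $b_{v,u}^2\,e(v,u)$, and since $b_{v,u}^2\geq 0$ and the edge counts satisfy $e(v,u)\geq 0$, both of these sums are $\leq 0$.

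The crucial term is the first one, $\sum_{u\in R,\,v\in V\setminus R} b_{v,u}(1-b_{v,u})e(v,u)$. Here the key observation — and essentially the only place any content enters — is that the coefficients $b_v$ are integers, so each difference $b_{v,u}=b_v-b_u$ is an integer. For every integer $n$ one has $n(1-n)\leq 0$ (it vanishes precisely at $n=0$ and $n=1$ and is strictly negative otherwise), so $b_{v,u}(1-b_{v,u})\leq 0$; combined with $e(v,u)\geq 0$ this makes the first sum nonpositive as well. Adding the three nonpositive contributions gives $(x-z)\cdot z\leq 0$.

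I anticipate the only point requiring care is well-definedness: $z$ is an element of the quotient $\Lambda(G)=\overline{\Lambda}(G)/\mathbb{Z}[V]$, so its coefficient vector $(b_v)$ is determined only up to adding a global constant. This causes no difficulty, since the entire right-hand side of \eqref{eq:usefulformula} depends only on the differences $b_{v,u}$, which are invariant under $b_v\mapsto b_v+c$; the term-by-term estimate is therefore independent of the chosen representative. Beyond recording this integrality argument I do not expect any genuine obstacle, as the lemma is simply the assertion that each summand in the already-computed formula is nonpositive.
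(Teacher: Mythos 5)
Your proposal is correct and is essentially the paper's own argument: the paper derives \eqref{eq:usefulformula} and then simply observes that each summand is nonpositive (the first because $b_{v,u}(1-b_{v,u})\leq 0$ for integers, the other two manifestly), which is exactly what you do. Your remark on well-definedness under the quotient by $\mathbb{Z}[V]$ is a sensible extra check but introduces nothing beyond the paper's reasoning.
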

The irreducible vectors in $\Lambda(G)$ will be of particular interest.
\begin{lem}\label{lem:irreducible}
 If $x \in \Lambda(G)\setminus \{0\}$, then $x$ is irreducible if, and only if, $x=[R]$ for some $R\subseteq V$ such that $R$ and $V\setminus R$ induce connected subgraphs of $G$.
\end{lem}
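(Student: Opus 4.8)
The plan is to prove both implications with the formula \eqref{eq:usefulformula} — whose sign analysis underlies Lemma~\ref{lem:usefulbound} — as the main engine. Throughout I would represent an element by coefficients $x = \sum_{v} b_v v$, using the freedom to add a constant to all $b_v$ (since $[V]=0$) to normalise $\min_v b_v = 0$. Then $x \ne 0$ in $\Lambda(G)$ exactly when the $b_v$ are not all equal, and $x = [R]$ precisely when every $b_v \in \{0,1\}$. It is also worth recording at the outset that irreducibility is preserved under negation and that $-[R] = [V\setminus R]$; this lets me treat $R$ and $V\setminus R$ symmetrically.

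For the forward implication, suppose $x$ is irreducible. First I would show $x=[R]$. Using the normalised representative, set $R_t = \{v : b_v \ge t\}$, so that $x = \sum_{t\ge 1}[R_t]$. A short edge-by-edge check (splitting each edge $\{u,v\}$ according to whether the threshold $t$ separates $b_u$ and $b_v$) shows via \eqref{eq:prod1} that $[R_t]\cdot(x-[R_t]) \ge 0$. Taking $t=1$, the set $R_1$ is proper and nonempty since $\min b_v = 0 < \max b_v$, so $[R_1]\ne 0$; and if some $b_v \ge 2$ then $x-[R_1]$ still has non-constant coefficients, so $x-[R_1]\ne 0$. This would contradict irreducibility, forcing $\max b_v \le 1$, i.e.\ $x=[R]$. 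Next, were $R$ to induce a disconnected subgraph, write $R = R_1 \sqcup R_2$ with no edges between the parts; then $[R_1]\cdot[R_2] = 0$ with both summands nonzero, again contradicting irreducibility. Hence $R$ is connected, and applying the same argument to the (also irreducible) element $-x = [V\setminus R]$ shows $V\setminus R$ is connected.

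For the reverse implication, suppose $x=[R]$ with both $R$ and $V\setminus R$ connected, and suppose for contradiction that $x = y+z$ with $y,z \ne 0$ and $y\cdot z \ge 0$. Writing $z = x-y$ gives $y\cdot z = x\cdot y - \norm{y}$, so $y\cdot z\ge 0$ means $x\cdot y \ge \norm{y}$; but Lemma~\ref{lem:usefulbound} applied to $[R]=x$ and $z=y$ gives $x\cdot y \le \norm{y}$. Thus equality holds, and \eqref{eq:usefulformula} (with its decomposed element taken to be $y=\sum b_v v$) must vanish term by term. Since $b_{v,u}(1-b_{v,u})\le 0$ for integers and the remaining terms are manifestly $\le 0$, vanishing forces $b_v = b_u$ across every edge inside $R$ and inside $V\setminus R$, and $b_v - b_u \in \{0,1\}$ across the cut. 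Connectivity of $R$ and of $V\setminus R$ then makes $b$ constant on each side, say $\alpha$ on $R$ and $\beta$ on $V\setminus R$ with $\beta - \alpha \in \{0,1\}$ (a cut edge exists since $G$ is connected). If $\beta=\alpha$ then $y=0$; if $\beta = \alpha+1$ then $y = [V\setminus R] = -x$, whence $z = 2x$ and $y\cdot z = -2\norm{x} < 0$, contradicting $y\cdot z\ge 0$. Either way the decomposition is impossible, so $x$ is irreducible.

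The main obstacle, and the step requiring the most care, is the equality analysis in the reverse direction: one must pin down exactly which configurations of the $b_v$ make every term of \eqref{eq:usefulformula} vanish, and then upgrade the resulting ``locally constant across each edge'' conditions to ``globally constant on each side'' using connectivity. The forward direction is more routine, but the bookkeeping needed to guarantee that the exhibited summands are genuinely nonzero in the quotient $\Lambda(G)$ — rather than secretly multiples of $[V]$ — is where the normalisation and sign-of-coefficient care pay off.
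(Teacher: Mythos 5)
Your argument follows the same route as the paper's: both directions are driven by \eqref{eq:prod1}, \eqref{eq:usefulformula} and Lemma~\ref{lem:usefulbound}, and the forward direction is correct. Your superlevel-set decomposition $x=\sum_{t\ge 1}[R_t]$ is a mild variant of the paper's use of the top level set $\{v : a_v=\max_u a_u\}$; both yield $x=[R]$ after one application of irreducibility, and the connectivity argument is identical.

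There is, however, a sign slip in the equality analysis of the reverse direction. Computing the cross term of $([R]-y)\cdot y$ directly from \eqref{eq:prod1}, an edge with $u\in R$ and $v\in V\setminus R$ contributes $b_{u,v}(1-b_{u,v})e(u,v)$ with $b_{u,v}=b_u-b_v$, i.e.\ the coefficient on the $R$ side minus the coefficient on the other side. (The displayed \eqref{eq:usefulformula} has these indices transposed, which is presumably where your version comes from; the prose in the paper's own proof uses the correct orientation, namely $c_u=c_v$ or $c_u=1+c_v$ for $u\in R$, $v\in V\setminus R$.) So term-by-term vanishing forces $\alpha-\beta\in\{0,1\}$, not $\beta-\alpha\in\{0,1\}$, in your notation. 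Consequently the genuinely possible non-constant configuration is $y=\beta[V]+[R]=[R]=x$, giving $z=0$; the configuration you analyse, $y=[V\setminus R]=-x$, never satisfies $([R]-y)\cdot y=0$ (it gives $-2\norm{x}<0$), while the configuration you omit, $y=x$, does satisfy it. The omission is harmless given your standing hypothesis $z\ne 0$, since the correct nontrivial case is dispatched in one line by $z=x-y=0$; but as written your case analysis rules out a vacuous case and misses the real one, so the sign should be corrected.
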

\begin{proof}
Write $x=\sum_{v\in V} a_v v$. Since $[V]=0$, we can assume the $a_v$ are chosen so that $\min_{v\in V} a_v = 0$.

Suppose $x\ne 0$ is irreducible. Let $a=\max_{v \in V}a_v\geq 1$ and $R=\{v|a_v=a\}$.
By direct computation, we get
\begin{align*}
(x-[R])\cdot [R] &= \sum_{v\in R} \bigg((a-1)d(v) -\sum_{u\in V\setminus \{v\}}a_u e(u,v)\bigg)\\
&\geq (a-1)\sum_{v\in R} \bigg(d(v) -\sum_{u\in V\setminus \{v\}}e(u,v)\bigg)\\
&= 0.
\end{align*}
Combining this with the irreducibility of $x$, it follows that $x=[R]$. If the subgraph induced by $R$ is not connected there would be $S,T$ disjoint with $R=S\cup T$ and $e(S,T)=0$, but this would give $x=[S]+[T]$ and $[S]\cdot [T]=0$. This also shows the subgraph induced by $V\setminus R$ must be connected, since $x$ irreducible implies that $-x=[V]-x$ is irreducible.
\paragraph{} To show the converse, we apply \eqref{eq:usefulformula} in the case where $R$ and $V\setminus R$ induce connected subgraphs. Let $y= \sum_{v\in V} c_v v$ be such that $([R]-y)\cdot y\geq 0$. By Lemma~\ref{lem:usefulbound}, this means $([R]-y)\cdot y = 0$. In particular, using (\ref{eq:usefulformula}), we have
\begin{equation*}
([R]-y)\cdot y=\sum_{u\in R, v \in V\setminus R} c_{u,v}(1-c_{u,v})e(u,v)-\frac{1}{2}\sum_{u,v \in V\setminus R}c_{u,v}^2e(u,v)-\sum_{u,v \in R}c_{u,v}^2 e(u,v)=0,
\end{equation*}
where $c_{u,v}=c_u-c_v$. This means every summand in the above equation must be 0. Thus if $u$ and $v$ are both in $R$ or $V\setminus R$ and $e(u,v)>0$, then $c_u=c_v$. Thus since $R$ and $V\setminus R$ induce connected subgraphs, $c_v$ is constant on $R$ and $V\setminus R$. If $u\in R$ and $v\in V \setminus R$, then $c_u=c_v$ or $c_u=1+c_v$. Thus if $c=c_v$ for some $v \in V\setminus R$, then $y=c[V]=0$ or $y=c[V\setminus R] + (c+1)[R]=[R]$. Thus $[R]$ is irreducible.
\end{proof}
Recall that a connected graph is {\em 2-connected} if it can not be disconnected by deleting a vertex. This property is characterised nicely in the corresponding graph lattice.
\begin{lem}\label{lem:2connectgraphlat}
The following are equivalent:
\begin{enumerate}[(i)]
\item The graph $G$ is 2-connected;
\item Every vertex $v\in V$ is irreducible;
\item The lattice $\Lambda(G)$ is indecomposable.
\end{enumerate}
\end{lem}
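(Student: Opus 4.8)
The plan is to prove the cycle of implications $(i)\Rightarrow(ii)\Rightarrow(iii)\Rightarrow(i)$, letting Lemma~\ref{lem:irreducible} carry most of the weight by translating between graph connectivity and lattice irreducibility. The guiding dictionary is that a single vertex $v$, viewed in $\Lambda(G)$, is the nonzero element $[\{v\}]$, and no other subset represents it (as $[R]=[R']$ in $\Lambda(G)$ forces $R=R'$ or $\{R,R'\}=\{\emptyset,V\}$). Hence by Lemma~\ref{lem:irreducible} the element $v$ is irreducible precisely when both $\{v\}$ and $V\setminus\{v\}$ induce connected subgraphs; since $\{v\}$ is trivially connected, this says exactly that $v$ is irreducible if and only if $V\setminus\{v\}$ is connected, i.e.\ if and only if $v$ is not a cut vertex.

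For $(i)\Rightarrow(ii)$ I would simply invoke this dictionary: if $G$ is 2-connected then no vertex is a cut vertex, so $V\setminus\{v\}$ is connected for every $v$, and every vertex is irreducible. For $(ii)\Rightarrow(iii)$, the key observation is that an irreducible element cannot be split across an orthogonal decomposition: if $\Lambda(G)=L_1\oplus L_2$ with both summands nonzero and $x=x_1+x_2$ with $x_1\in L_1\setminus\{0\}$ and $x_2\in L_2\setminus\{0\}$, then $x_1\cdot x_2=0\geq 0$ exhibits $x$ as reducible. So, assuming every vertex is irreducible, each vertex lies entirely in $L_1$ or in $L_2$, inducing a partition $V=V_1\sqcup V_2$; both parts are nonempty because $L_i$ is generated by the vertices it contains and is nonzero. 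Orthogonality of $L_1$ and $L_2$ then forces $-e(v,w)=v\cdot w=0$ for all $v\in V_1$ and $w\in V_2$, so there are no edges between $V_1$ and $V_2$, contradicting the connectedness of $G$; thus $\Lambda(G)$ is indecomposable.

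For $(iii)\Rightarrow(i)$ I would argue the contrapositive. A connected graph that is not 2-connected has a cut vertex $w$, so $V\setminus\{w\}$ splits as a disjoint union $A\sqcup B$ of nonempty sets with $e(A,B)=0$. Let $L_A$ and $L_B$ be the sublattices of $\Lambda(G)$ generated by the vertices of $A$ and of $B$ respectively. Since $[V]=0$ gives $w=-[A]-[B]\in L_A+L_B$, these sublattices span $\Lambda(G)$; since $e(A,B)=0$ they are orthogonal; and positive-definiteness then forces $L_A\cap L_B=\{0\}$. Both summands are nonzero because a cut vertex forces $|V|\geq 3$, which makes every vertex a nonzero element of $\Lambda(G)$. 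Hence $\Lambda(G)=L_A\oplus L_B$ is decomposable.

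The routine part is verifying the three conditions (spanning, trivial intersection, nonzero summands) for the decomposition in the last implication, and keeping an eye on the degenerate small-graph cases. The one genuinely load-bearing idea, and the step I expect to require the most care, is the observation in $(ii)\Rightarrow(iii)$ that irreducibility of the vertices is exactly what prevents them from being distributed across a nontrivial orthogonal splitting; everything else is bookkeeping around Lemma~\ref{lem:irreducible} and the identity $[V]=0$.
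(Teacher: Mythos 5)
Your proposal is correct and follows essentially the same route as the paper: $(i)\Leftrightarrow(ii)$ via Lemma~\ref{lem:irreducible}, $(ii)\Rightarrow(iii)$ by observing that irreducible vertices cannot be split across an orthogonal decomposition and so partition $V$, and $(iii)\Rightarrow(i)$ by building the decomposition from the components of $G\setminus\{w\}$ for a cut vertex $w$. The only differences are cosmetic (you close the contradiction in $(ii)\Rightarrow(iii)$ via connectedness of $G$ rather than via $[R_1]=-[R_2]$ and positive-definiteness, and you prove only the three-step cycle), so no further comment is needed.
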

\begin{proof} The equivalence $(i)\Leftrightarrow (ii)$ follows from Lemma \ref{lem:irreducible}.

To show $(ii) \Rightarrow (iii)$, suppose $\Lambda(G)= L_1 \oplus L_2$. For any $v\in V$, $v$ irreducible implies $v\in L_1$ or $v\in L_2$. So if every $v$ is irreducible, then the sets $R_i=\{v | [v]\in L_i\}$ partition $V$. This gives $[R_1]+[R_2]=[V]=0$ and $[R_1]\cdot [R_2]=0$. It follows that $[R_1]=[R_2]=0$ and so, assuming $L_1\ne 0$, the partition is trivial, i.e. $R_1=V$ and $R_2=\emptyset$. This implies $L_1=\Lambda(G)$ and so $\Lambda(G)$ is indecomposable.

The implication $(iii)\Rightarrow (i)$ is shown as follows. Suppose $G$ is not 2-connected, so there is a vertex $v$ such that $G\setminus\{v\}$ is disconnected. Suppose this has components $G_1$ and $G_2$. Then we have $\Lambda(G)=L_1\oplus L_2$, where $L_1$ and $L_2$ are the sublattices spanned by the vertices of $G_1$ and $G_2$ respectively. So $\Lambda(G)$ is decomposable, if $G$ is not 2-connected.
\end{proof}

Finally, we use the above material to gain further information on the structure of a graph from its lattice. Recall that an edge, $e$, is a {\em cut-edge} if $G\setminus \{e\}$ is disconnected.

\begin{lem}\label{lem:cutedge}
Suppose that $G$ is 2-connected, contains no cut-edges and there is a vertex $v$ such that we can find $x,y\in \Lambda(G)$, with $v=x+y$ and $x\cdot y=-1$. Then there is a cut edge $e$ in $G\setminus \{v\}$ and if $R,S$ are the vertices of the two components of $(G\setminus \{v\})\setminus\{e\}$ then $\{x,y\}=\{[R]+v,[S]+v\}$. Furthermore, there are unique vertices $u_1,u_2\ne v$, with $x\cdot u_1=y\cdot u_2=1$, and any vertex $w \notin \{v,u_1,u_2\}$ satisfies $w\cdot x,w\cdot y\leq 0$.
\end{lem}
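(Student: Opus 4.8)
The plan is to decode the hypothesis $v = x+y$ with $x\cdot y = -1$ using the structure of irreducible vectors established in Lemma~\ref{lem:irreducible}. Since $v$ is a single vertex and $G$ is 2-connected, $v$ is irreducible, so we cannot have $x\cdot y \geq 0$; the equation $x\cdot y = -1$ is the closest possible to a genuine decomposition. First I would write $x = \sum_{w} b_w w$ and consider the standard normalization where I examine the coefficient function $b$ and its complement $c$ defined by $y = v - x$, so that $c_w = -b_w$ for $w \neq v$ and $c_v = 1 - b_v$. The strategy is to apply the formula \eqref{eq:usefulformula} (or its proof technique) to extract rigidity from the condition that $x\cdot y$ equals $-1$ rather than being $\leq 0$.

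The key computational step is to run the argument of Lemma~\ref{lem:irreducible} but tracking the single unit of ``defect.'' Writing $x\cdot y = ([R]-z)\cdot z$-type expressions, each summand in the analogue of \eqref{eq:usefulformula} is nonpositive, and their total is $-1$. Hence exactly one summand contributes $-1$ and all others vanish. The vanishing summands force the coefficient function to be constant across each edge within the relevant vertex subsets, so — exactly as in the converse direction of Lemma~\ref{lem:irreducible} — the coefficients of $x$ take only the values appearing in an indicator-type vector. I would argue that after removing $v$, the coefficients partition $V\setminus\{v\}$ into two connected pieces $R$ and $S$ whose boundary in $G\setminus\{v\}$ consists of a single edge $e$ (this single edge is precisely the one summand equal to $-1$). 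This identifies $\{x,y\} = \{[R]+v, [S]+v\}$: the two pieces together with $v$, with $v$ included in both to account for the coefficient on $v$. The fact that $e$ is a cut-edge of $G\setminus\{v\}$ follows because $R$ and $S$ are the two components of $(G\setminus\{v\})\setminus\{e\}$, and $G$ being 2-connected with no cut-edges guarantees this configuration is internally consistent (the removed vertex $v$ is what glues $R$ and $S$ into a 2-connected whole).

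For the ``furthermore'' clause, I would compute $x\cdot u$ for vertices $u$ using the formula $u\cdot x = \sum_{w}(b_u - b_w)e(u,w)$ from the preamble. Since the coefficient vector of $x = [R]+v$ is essentially the indicator of $R\cup\{v\}$, a neighbor $u$ of this set lying outside it contributes a positive term only across edges leaving $R\cup\{v\}$. The single cut-edge $e$ between $R$ and $S$ produces exactly one vertex $u_1 \in S$ adjacent to $R$ across $e$, giving $x\cdot u_1 = 1$; symmetrically $y\cdot u_2 = 1$ for the endpoint $u_2\in R$ of $e$. For any other vertex $w\notin\{v,u_1,u_2\}$, all its edges either stay within one of the connected pieces (contributing $0$ to the relevant product) or run to $v$ (which lies in both $x$ and $y$'s support and cancels), yielding $w\cdot x, w\cdot y \leq 0$.

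The main obstacle I anticipate is bookkeeping the coefficient on the vertex $v$ itself and confirming it lands in the support of \emph{both} $x$ and $y$ with the right value. Because $v = x+y$, the coefficients of $x$ and $y$ on $v$ must sum to $1$, yet the rigidity argument naturally produces $0/1$-valued indicators on $V\setminus\{v\}$; I will need to check that the only consistent split is the one where $v$ appears with coefficient contributing to both $[R]+v$ and $[S]+v$. This requires care because $[R]+v$ and $[S]+v$ overlap precisely at $v$, so verifying $([R]+v)+([S]+v) = [R]+[S]+2v = [V] - (\text{coefficient adjustment})$ reduces correctly to $v$ modulo $[V]=0$ is the delicate point. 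Handling the sign conventions and the reduction modulo $[V]$ carefully is where the bulk of the genuine verification lies, though each individual inequality is routine once the partition is identified.
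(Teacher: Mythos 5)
Your route is genuinely different from the paper's and it can be made to work, but two points need attention before it is a proof. The paper first shows that $x$ and $y$ are themselves irreducible (using $x\cdot y=-1$ together with the absence of norm-one vectors, which is where the ``no cut-edges'' hypothesis enters there), then invokes Lemma~\ref{lem:irreducible} to write $x=[R']$, $y=[S']$ with $R'\cup S'=V$ and $R'\cap S'=\{v\}$, and finally computes $x\cdot y=[R]\cdot[S]=-e(R,S)$. You instead apply the rigidity of \eqref{eq:usefulformula} directly with $R=\{v\}$: the cross terms $t(1-t)e(v,w)$ are each $0$ or $\le -2$, so they must all vanish, and exactly one edge inside $V\setminus\{v\}$ carries the $-1$; this is shorter and bypasses the irreducibility step entirely. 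What your outline leaves open is precisely the step you flag at the end, and it is where ``no cut-edges'' must enter \emph{your} argument: the vanishing cross terms only constrain $b_w$ for $w$ adjacent to $v$, so to force the two constant values of $b$ on the components $R$ and $S$ of $(G\setminus\{v\})\setminus\{e\}$ to be exactly $\{b_v-1,b_v\}$ you must show that $v$ has a neighbour in \emph{both} components. If, say, $S$ contained no neighbour of $v$, then $e$ would be a cut-edge of $G$; without excluding this, $x$ could equal $[S]$ rather than $[R]+v$ and the conclusion fails. This is a short argument (any path from $R$ to $S$ in $G\setminus\{e\}$ must pass through $v$), but it is the actual content of the deferred verification, not mere sign bookkeeping.

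Second, your identification of $u_1$ is backwards: for $u_1\in S$, lying outside the support of $x=[R]+v=[R\cup\{v\}]$, one has $x\cdot u_1=-e(u_1,R\cup\{v\})\le -1$, not $+1$. The vertex pairing positively with $x$ is the endpoint of $e$ lying \emph{inside} $R$: for $u\in R$ one has $x\cdot u=e(u,S)$, which equals $1$ for that endpoint and $0$ for every other vertex of $R$; symmetrically for $y$ and the endpoint in $S$. The stated conclusion survives once the two endpoints are swapped, and the remaining inequalities $w\cdot x,\,w\cdot y\le 0$ for $w\notin\{v,u_1,u_2\}$ then follow as you indicate.
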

\begin{proof}
We will use the irreducibility of $v$ to show that $x$ and $y$ are irreducible. Suppose that we can write $x=z+w$ with $z\cdot w\geq 0$. Since $x\cdot y =-1$, it follows that $z\cdot y\geq 0$ or $w\cdot y\geq 0$. Without loss of generality, we may assume that $w\cdot y\geq 0$ and hence that $w\cdot (y+z)\geq 0$. Using the irreducibility of $v$, this implies either $w=0$ or $y+z=0$. If we assume $w\ne 0$, then it follows that $w=v$ and $y=-z$. Thus from $x\cdot y=-1$ and $w.z\geq 0$, we obtain $z\cdot (v+z)=1$ and  $v.z\geq 0$. Combining these shows that $\norm{z}\leq 1$. As $G$ contains no cut-edges, $\Lambda(G)$ does not contain any vectors of norm 1, so we can conclude that $z=0$. Therefore we have shown that $z=0$ or $w=0$, so we have shown that $x$ is irreducible.

\paragraph{} As $x$ and $y$ are irreducible, Lemma \ref{lem:irreducible} shows that they take the form $x=[R']$ and $y=[S']$, for some $R',S' \subseteq V$ satisfying
$$v=[R']+[S']=[R' \cup S'] + [R' \cap S'].$$
Since $x,y \ne 0$, we must have $R' \cup S'=V$ and $R' \cap S'=\{v\}$. Thus, $x=v+[R]$ and $y=v+[S]$, where $R$ and $S$ are disjoint and $R\cup S=V\setminus\{v\}$. Moreover, using irreducibility and Lemma \ref{lem:irreducible} again, it follows that  subgraphs induced by $R,S, R\cup\{v\}$ and $S\cup\{v\}$ must all be connected. Since $[R]+[S]=[V]-v=-v$, it follows that
\begin{equation*}
x\cdot y=\norm{v}+v\cdot ([R]+[S]) + [R]\cdot[S] =[R]\cdot[S]=-e(R,S)=-1.
\end{equation*}
This gives a unique edge from $R$ to $S$. The rest of the lemma follows by taking $u_1\in R$ and $u_2\in S$ to be the end points of this edge.
\end{proof}

\subsection{Change-Maker Lattices}\label{sec:CMlattices}
In this section, we define change-maker lattices and explore their properties. Change-maker lattices were defined by Greene in his solution to the lens space realization problem \cite{GreeneLRP} and work on the cabling conjecture \cite{greene2010space}. Our definition is the variant on these that arises in the case of half-integer surgery, (cf. \cite{Greene3Braid}).

\paragraph{}Let $\mathbb{Z}^{r+2}= \langle e_{-1}, e_0, \dotsc , e_r \rangle$, where $\{e_{-1}, \dotsc , e_r\}$ is an orthornormal basis. A {\em change-maker lattice} $L \subseteq \mathbb{Z}^{r+2}$ is defined to be the orthogonal complement,
$$L = \langle \rho , \sigma \rangle ^\bot,$$
where $\rho =e_{-1} - e_0$ and $\sigma = e_0+\sigma_1 e_1 + \dotsb \sigma_r e_r$ satisfies the {\em change-maker condition},
$$0\leq \sigma_1 \leq 1, \text{ and } \sigma_{i-1} \leq \sigma_i \leq \sigma_1 + \dotsb + \sigma_{i-1} +1,\text{ for } 1<i\leq r.$$
For the rest of the section, $L = \langle \rho , \sigma \rangle ^\bot$ will be a change-maker lattice. The case where $\sigma_1=0$ is a degenerate one. We will deal fully with this case at the end of the section in Lemma~\ref{lem:indecomp}. In the meantime, we will work under the assumption $\sigma_s\geq 1$ for $s\geq1$.

The change-maker condition is equivalent to the following combinatorial result.
\begin{prop}[Brown \cite{brown1961note}]
Let  $\sigma =\{\sigma_1, \dotsc , \sigma_s\}$, with $\sigma_1\leq \dotsb \leq \sigma_s$. There is $A\subseteq \{ 1, \dotsc , s\}$ such that $k=\sum_{i\in A} \sigma_i$, for every integer  $k$ with $0\leq k\leq \sigma_1 + \dotsb + \sigma_s$, if and only if $\sigma$ satisfies the change-maker condition.
\end{prop}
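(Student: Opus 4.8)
The plan is to prove both implications by partial-sum arguments. Throughout I would write $S_j=\sigma_1+\dotsb+\sigma_j$ for the partial sums, with $S_0=0$ and $S=S_s$ the total, and represent a subset sum of $\{\sigma_1,\dotsc,\sigma_j\}$ as $\sum_{i\in A}\sigma_i$ for $A\subseteq\{1,\dotsc,j\}$. The first observation I would record is that, since the $\sigma_i$ are already assumed to satisfy $\sigma_1\leq\dotsb\leq\sigma_s$, the change-maker condition is equivalent to the single family of inequalities $\sigma_i\leq S_{i-1}+1$ for all $1\leq i\leq s$; here the case $i=1$ records $\sigma_1\leq 1$ via $S_0=0$. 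This reformulation is what makes both directions transparent.

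For the forward direction (change-maker condition $\Rightarrow$ every $k$ is a subset sum), I would induct on $j$ to prove the stronger statement that every integer $k$ with $0\leq k\leq S_j$ is a subset sum of $\{\sigma_1,\dotsc,\sigma_j\}$. The base case $j=0$ is immediate from the empty set. For the inductive step, take $0\leq k\leq S_j=S_{j-1}+\sigma_j$: if $k\leq S_{j-1}$, apply the hypothesis without using $\sigma_j$; if $k>S_{j-1}$, then the bound $\sigma_j\leq S_{j-1}+1\leq k$ gives $0\leq k-\sigma_j\leq S_{j-1}$, so $k-\sigma_j$ is a subset sum of the first $j-1$ terms and adjoining $\sigma_j$ represents $k$. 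Setting $j=s$ yields the claim.

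For the converse (representability $\Rightarrow$ change-maker condition) I would argue by contradiction using a minimal failing index. Suppose every $k$ in $[0,S]$ is a subset sum but the condition fails; let $i$ be least with $\sigma_i\geq S_{i-1}+2$, and set $k=S_{i-1}+1$. Then $1\leq k<S_i\leq S$, so by hypothesis $k$ is a subset sum. But any representation using a term $\sigma_j$ with $j\geq i$ already has $\sigma_j\geq\sigma_i\geq S_{i-1}+2>k$ by monotonicity, so it overshoots, while the first $i-1$ terms total only $S_{i-1}=k-1<k$. Hence $k$ admits no representation, a contradiction.

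This is an elementary result, so there is no serious obstacle; the care lies in the bookkeeping. In the forward direction the one place the hypothesis is genuinely used is the verification that $k-\sigma_j\geq 0$, and in the converse the crux is recognising $S_{i-1}+1$ as the smallest value the first $i-1$ coins cannot reach while every later coin is strictly too large. The only conceptual step is the initial reformulation of the change-maker condition as $\sigma_i\leq S_{i-1}+1$, after which both halves reduce to these short inductive and minimal-counterexample arguments.
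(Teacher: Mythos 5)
Your argument is correct, but there is nothing in the paper to compare it against: the paper states this proposition as a black box, attributing it to Brown's 1961 note, and never proves it. What you have written is the standard self-contained argument, and both halves check out. The reformulation of the change-maker condition as $\sigma_i\leq S_{i-1}+1$ (given the monotonicity hypothesis) is the right simplification, the induction in the forward direction correctly uses $\sigma_j\leq S_{j-1}+1\leq k$ to keep $k-\sigma_j$ in range, and the choice of $k=S_{i-1}+1$ at a minimal failing index is exactly the value that falls in the gap between what the first $i-1$ terms can reach and what any later term already exceeds. One small point worth making explicit: both the step ``a subset of the first $i-1$ terms sums to at most $S_{i-1}$'' and the step ``a representation containing $\sigma_j$ overshoots'' silently use that all $\sigma_i$ are non-negative. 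This is not literally part of the hypotheses as the proposition is phrased (only $\sigma_1\leq\dotsb\leq\sigma_s$ is assumed), though it is forced in the forward direction by the change-maker condition itself and is implicit throughout the paper, where the $\sigma_i$ arise as coefficients of a change-maker vector. A one-line standing assumption that the $\sigma_i$ are non-negative integers would close this and costs nothing.
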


We say that $\sigma_s$ is {\em tight} if
$$\sigma_s=1+\sigma_1 + \dotsb + \sigma_{s-1},$$
and that $\sigma$ is {\em tight} if there is $s>1$ with $\sigma_s$ tight. Otherwise, we say that $\sigma$ is {\em slack}. The following variation on the preceding proposition will be useful.

\begin{lem}\label{lem:slacksum}
For any $s>1$ we can write $\sigma_s=\sum_{i \in A} \sigma_i$ with $A\subseteq \{0, \dotsc , s-1 \}$, and $1\in A$. Moreover, if $\sigma$ is slack we may assume that $0\notin A$.
\end{lem}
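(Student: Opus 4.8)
The plan is to exploit the normalisation $\sigma_0=\sigma_1=1$ forced by our working hypotheses: the coefficient of $e_0$ in $\sigma$ is $1$ by definition, and since $0\le\sigma_1\le 1$ together with the standing assumption $\sigma_1\ge 1$ we get $\sigma_1=1$. The central trick is to \emph{reserve} the index $1$ by putting it into $A$ from the outset: since $\sigma_1=1$, finding $A$ with $1\in A$ and $\sigma_s=\sum_{i\in A}\sigma_i$ is equivalent to representing the smaller number $\sigma_s-1$ as a subset sum drawn from the remaining indices. I would then produce such a representation by applying Brown's combinatorial characterisation above to a suitable subsequence of $\{\sigma_0,\sigma_2,\dots,\sigma_{s-1}\}$, choosing the subsequence so that it genuinely satisfies the change-maker condition.

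For the first (unconditional) statement I would work with the multiset $\{\sigma_0,\sigma_2,\dots,\sigma_{s-1}\}$, i.e.\ the original coefficients with $\sigma_1$ deleted but $\sigma_0=1$ retained. One checks this is sorted with smallest entry $\sigma_0=1\le 1$, that $\sigma_2\le\sigma_1+1=\sigma_0+1$, and that for $k\ge 3$ the bound $\sigma_k\le 1+\sigma_1+\dots+\sigma_{k-1}=1+\sigma_0+\sigma_2+\dots+\sigma_{k-1}$ is exactly the change-maker inequality for the new sequence (using $\sigma_0=\sigma_1=1$). Hence Brown's proposition applies and every integer in $[0,\,\sigma_0+\sigma_2+\dots+\sigma_{s-1}]$ is a subset sum over indices in $\{0,2,\dots,s-1\}$. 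The change-maker bound $\sigma_s\le 1+\sigma_1+\dots+\sigma_{s-1}$ rearranges to $\sigma_s-1\le \sigma_0+\sigma_2+\dots+\sigma_{s-1}$, so $\sigma_s-1$ lies in this range; writing $\sigma_s-1=\sum_{i\in A'}\sigma_i$ with $A'\subseteq\{0,2,\dots,s-1\}$ and setting $A=A'\cup\{1\}$ gives the claim with $1\in A$.

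For the ``moreover'' clause I would first observe that slackness \emph{upgrades} the relevant inequalities to $\sigma_j\le\sigma_1+\dots+\sigma_{j-1}$ for every $j>1$; in particular $\sigma_2\le\sigma_1=1$, so $\sigma_2=1$. This is precisely what lets me discard $\sigma_0$: the sequence $\{\sigma_2,\dots,\sigma_{s-1}\}$ now has smallest entry $\sigma_2=1\le 1$, and for each later index the slack inequality $\sigma_{k}\le\sigma_1+\dots+\sigma_{k-1}=1+\sigma_2+\dots+\sigma_{k-1}$ is exactly its change-maker condition. Brown's proposition then represents every integer in $[0,\,\sigma_2+\dots+\sigma_{s-1}]$ over indices in $\{2,\dots,s-1\}$, and slackness for $s$ gives $\sigma_s-1\le\sigma_2+\dots+\sigma_{s-1}$, so $\sigma_s-1$ is representable there; adjoining the index $1$ yields $A\subseteq\{1,\dots,s-1\}$ with $1\in A$ and $0\notin A$. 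The main point to get right—and the only place any real work happens—is the verification that the truncated coefficient sequences still satisfy the change-maker condition; everything hinges on the bookkeeping identity $\sigma_0=\sigma_1=1$ in the general case and on the strengthened slack inequality $\sigma_j\le\sigma_1+\dots+\sigma_{j-1}$ in the slack case.
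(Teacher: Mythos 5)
Your argument is correct, and every step checks out: with the standing normalisation $\sigma_0=\sigma_1=1$ the truncated sequences $(\sigma_0,\sigma_2,\dots,\sigma_{s-1})$ and, in the slack case, $(\sigma_2,\dots,\sigma_{s-1})$ really do inherit the change-maker condition (the verification you flag as the only real work is exactly right, including the observation that slackness forces $\sigma_2=1$), and the range conditions $0\le\sigma_s-1\le\sum\sigma_i$ follow from $\sigma_s\ge 1$ and the change-maker (resp.\ slack) upper bound. The route is, however, different from the paper's: the paper disposes of this lemma with a one-line appeal to induction on $s$ (presumably a greedy descent of the kind used to prove Brown's proposition itself), whereas you quote Brown's proposition as a black box and reduce the lemma to checking that deleting $\sigma_1$ (and, when slack, also $\sigma_0$) from the coefficient sequence preserves the change-maker condition. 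Your version has the advantage of being fully explicit and of isolating exactly where slackness is used, at the cost of a re-indexing verification the paper's induction would avoid; either is acceptable, and yours supplies the detail the paper omits. One very minor point worth a sentence in a write-up: when $s=2$ the truncated sequences are degenerate (empty in the slack case), so you should note that $\sigma_s-1=0$ is then the empty subset sum, which your argument already covers implicitly.
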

\begin{proof}
This can be done by induction on $s$.
\end{proof}

We establish the following notation for $v \in L$:
\begin{itemize}
 \item $ \supp(v) = \{i : v\cdot e_i \ne 0\}$
 \item $ \suppp(v) = \{i : v \cdot e_i> 0\}$
 \item $ \suppm(v) = \{i : v\cdot e_i < 0\}$
\end{itemize}
For $s \in \{1, \dotsc , r\}$, we construct a vector $v_k$ as follows. If $\sigma_s$ is tight, then set
$$ v_s = -e_s + e_{-1} + e_0 + \dotsb + e_{s-1}.$$
If $\sigma_s < \sigma_1 + \dotsb + \sigma_{s-1} + 1$, then there is $A_s \subset \{1, \dotsc , s-2\}$ such that $\sigma_s -\sigma_{s-1} = \sum_{i \in A_s} \sigma_i$.  Set
$$v_s = -e_s + e_{s-1} + \sum_{i \in A_s} e_i.$$
We call $S := \{v_1, \dotsc , v_r\}$ a {\em standard basis}. The requirements that $s-1 \in \supp(v_s)$ and that $0\notin \supp(v_s)$ unless $s$ is tight are not essential, and are chosen for convenience later in the paper.
\begin{lem}\label{lem:stdbasisisbasis}
A standard basis, $S$, is a basis for $L$.
\end{lem}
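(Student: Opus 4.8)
The plan is to show directly that every $v \in L$ is a \emph{unique} integer combination of the $v_s$; uniqueness will give linear independence, and together with spanning this shows $S$ is a $\mathbb{Z}$-basis. The whole argument rests on the observation that the matrix expressing the $v_s$ in the orthonormal basis $\{e_{-1}, \dotsc, e_r\}$ is unitriangular.

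First I would check that each $v_s$ genuinely lies in $L$, i.e.\ that $v_s \cdot \rho = v_s \cdot \sigma = 0$. In the tight case $v_s = -e_s + e_{-1} + e_0 + \dotsb + e_{s-1}$ one has $v_s \cdot \rho = 1 - 1 = 0$ and, since the $e_0$-coefficient of $\sigma$ is $1$, $v_s \cdot \sigma = 1 + \sigma_1 + \dotsb + \sigma_{s-1} - \sigma_s$, which vanishes exactly because $\sigma_s$ is tight. In the slack case $v_s = -e_s + e_{s-1} + \sum_{i \in A_s} e_i$ neither $e_{-1}$ nor $e_0$ appears, so $v_s \cdot \rho = 0$, while $v_s \cdot \sigma = -\sigma_s + \sigma_{s-1} + \sum_{i \in A_s} \sigma_i = 0$ by the defining property of $A_s$. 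These are routine.

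The structural fact I would record next is that, in both cases, $\supp(v_s) \cap \{1, \dotsc, r\} \subseteq \{1, \dotsc, s\}$, with $v_s \cdot e_s = -1$. Given an arbitrary $v \in L$, write $a_i = v \cdot e_i$. I then determine integers $n_r, n_{r-1}, \dotsc, n_1$ by downward induction: having fixed $n_{s'}$ for all $s' > s$, the $e_s$-coefficient of $\sum_{s'} n_{s'} v_{s'}$ equals $-n_s + \sum_{s' > s} n_{s'} (v_{s'} \cdot e_s)$ (terms with $s' < s$ contribute nothing by the support condition), so matching it to $a_s$ forces
$$n_s = \sum_{s' > s} n_{s'} (v_{s'} \cdot e_s) - a_s \in \mathbb{Z}.$$
The base step $s = r$ works because only $v_r$ meets $e_r$. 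By construction $w := v - \sum_s n_s v_s$ satisfies $w \cdot e_i = 0$ for all $i \geq 1$, so $w = a'_{-1} e_{-1} + a'_0 e_0$; but $w \in L$ gives $a'_{-1} = a'_0$ from $w \cdot \rho = 0$ and then $a'_0 = 0$ from $w \cdot \sigma = 0$, whence $w = 0$. Thus $v = \sum_s n_s v_s$, and the recursion shows the $n_s$ are uniquely determined by $v$. This simultaneously proves that $S$ spans $L$ over $\mathbb{Z}$ and that the $v_s$ are linearly independent, so $S$ is a basis.

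The only point requiring care — and the crux of the argument — is that $S$ generates $L$ over $\mathbb{Z}$ rather than merely over $\mathbb{Q}$; this is guaranteed precisely by the unitriangular shape (leading coefficient $-1$ on $e_s$), which makes each $n_s$ come out integral. Everything else is bookkeeping, with a small verification that the boundary case $s = 1$ — which under the standing assumption $\sigma_1 \geq 1$ is forced to be tight with $\sigma_1 = 1$, giving $v_1 = -e_1 + e_{-1} + e_0$ — behaves as claimed.
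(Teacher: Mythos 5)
Your proof is correct and is essentially the paper's argument: the paper also uses the unitriangular structure, killing the top coefficient via $v' = v + (v\cdot e_k)v_k$ and iterating downward, then disposing of the residual supported on $\{e_{-1},e_0\}$ by exactly the same computation with $\rho$ and $\sigma$. The only cosmetic difference is that you package linear independence as uniqueness of the coefficients $n_s$, whereas the paper deduces it directly from the support condition $k\in\supp(v_k)$, $k\notin\supp(v_j)$ for $j<k$.
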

\begin{proof}
Since $k \in \supp(v_k)$ and $k \notin \supp(v_j)$ for $j<k$, we see that the the standard basis vectors are linearly independent.

Now consider $v\in L$. Suppose $k= \max \supp(v)\geq 1$. Set $v' = v + (v\cdot e_k)v_k$. By construction, $\max \supp(v') < k$. So to show that $S$ spans $L$, it suffices to show that there is no $v\neq 0$ with $\max \supp(v)<1$. Suppose we have such a $v$. Since $v\cdot\rho = 0$, we have $v=a(e_{-1} + e_0)$. This gives $\sigma\cdot v = a$. So $v \in L$ implies $v=0$. Thus the standard basis spans $L$.
\end{proof}

The following shows that if a lattice has a basis that looks like a standard basis, then it is a change-maker lattice.
\begin{lem}\label{lem:sufficentCMcondition}
Suppose we have a collection of vectors $\{w_1, \dotsc , w_r\} \subseteq \mathbb{Z}^{r+2}$ of the form
$w_s=-e_s+e_{s-1}+\sum_{i\in A_s} e_i$ with $A_s \subseteq \{-1,\dotsc , s-2\}$ and $-1\in A_s \Leftrightarrow 0\in A_s$, then there is an indecomposable change-maker lattice with $\{w_1, \dotsc , w_r\}$ as a basis.
\end{lem}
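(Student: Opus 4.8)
The plan is to reverse-engineer the construction of a change-maker lattice from the given basis vectors $\{w_1,\dots,w_r\}$. Each $w_s=-e_s+e_{s-1}+\sum_{i\in A_s}e_i$ is built to mimic a standard basis vector, so I would first read off what the coefficients $\sigma_i$ must be. The natural candidate is $\sigma=e_0+\sum_{i=1}^r\sigma_ie_i$ where I recover $\sigma_s$ from the requirement that $w_s\cdot\sigma=0$: expanding, $w_s\cdot\sigma = -\sigma_s+\sigma_{s-1}+\sum_{i\in A_s}\sigma_i$ (with the convention $\sigma_{-1}=\sigma_0=1$, matching the $e_{-1},e_0$ entries of $\sigma$ and $\rho$), so I define $\sigma_s$ recursively by $\sigma_s=\sigma_{s-1}+\sum_{i\in A_s}\sigma_i$. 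The base cases are set by $\sigma_0=1$ and whether $-1,0\in A_1$.

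With $\sigma$ so defined, the main verification has three parts. First I would check that $\sigma$ satisfies the change-maker condition $0\le\sigma_1\le1$ and $\sigma_{s-1}\le\sigma_s\le1+\sigma_1+\dots+\sigma_{s-1}$. The lower bound $\sigma_{s-1}\le\sigma_s$ is immediate from the recursion since the $\sigma_i$ are nonnegative and $\sigma_{s-1}$ appears explicitly. The upper bound follows because $A_s\subseteq\{-1,\dots,s-2\}$, so the sum $\sum_{i\in A_s}\sigma_i$ is at most $\sigma_{-1}+\sigma_0+\sigma_1+\dots+\sigma_{s-2}=2+\sigma_1+\dots+\sigma_{s-2}$; combined with the condition $-1\in A_s\Leftrightarrow 0\in A_s$, I would argue the total stays within $1+\sigma_1+\dots+\sigma_{s-1}$. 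Proving nonnegativity and $\sigma_1\le1$ from the base case is routine. Second, I would form the genuine change-maker lattice $L=\langle\rho,\sigma\rangle^\perp$ for this $\sigma$ and confirm each $w_s$ lies in $L$: by construction $w_s\cdot\sigma=0$, and $w_s\cdot\rho=0$ follows from the hypothesis $-1\in A_s\Leftrightarrow 0\in A_s$ (the $e_{-1}$ and $e_0$ coefficients of $w_s$ agree, since $\rho=e_{-1}-e_0$). Third, I would show $\{w_1,\dots,w_r\}$ is actually a basis of $L$, not merely a linearly independent subset — the $w_s$ are triangular with respect to the ordering (the top index $s$ appears only in $w_s$), exactly as in the proof of Lemma~\ref{lem:stdbasisisbasis}, so the same spanning argument applies: any $v\in L$ can be reduced to have empty positive support, forcing $v=0$.

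The indecomposability claim is where I would invest the most care, and I expect it to be the main obstacle. The cleanest route is to produce an explicit standard basis for $L$ and invoke whatever structural criterion the paper develops later; but since this lemma is stated before those tools, I would instead argue directly that no nontrivial orthogonal splitting $L=L_1\oplus L_2$ can exist. The key observation is that the vectors $w_s$ chain together: consecutive basis vectors share support through the $e_{s-1}$ terms, so $w_s\cdot w_{s+1}\neq0$ in general, which ties all the basis vectors into a single indecomposable block. Concretely, I would suppose a splitting existed, assign each $w_s$ to one summand, and use the nonzero pairings $w_s\cdot w_{s-1}$ (arising from the shared $e_{s-1}$ entry) to force all $w_s$ into the same summand, contradicting nontriviality. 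The delicate point is handling the cases where $A_s$ is empty or small, so that the linking pairing might vanish; I would need to check that the term $e_{s-1}$ in $w_s$ always interacts nontrivially either with $w_{s-1}$ or with some $w_i$ having $s-1\in A_i$, establishing a connected "interaction graph" on $\{w_1,\dots,w_r\}$ whose connectivity yields indecomposability.
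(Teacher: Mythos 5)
Your overall strategy is the same as the paper's: recover $\sigma_s$ from the requirement $w_s\cdot\sigma=0$ via the recursion $\sigma_s=\sigma_{s-1}+\sum_{i\in A_s}\sigma_i$, verify the change-maker condition, and use the spanning argument of Lemma~\ref{lem:stdbasisisbasis} to see that the $w_s$ generate all of $\langle\sigma,\rho\rangle^{\bot}$. However, your base case is wrong, and the error is not cosmetic. Since $\sigma=e_0+\sigma_1e_1+\dotsb+\sigma_re_r$ has \emph{no} $e_{-1}$-component, the correct expansion of $w_s\cdot\sigma$ requires the convention $(\sigma_{-1},\sigma_0)=(0,1)$ --- which is exactly the initialisation the paper uses --- not $\sigma_{-1}=\sigma_0=1$. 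With your convention, whenever $-1\in A_s$ (equivalently $0\in A_s$) the pair $\{-1,0\}$ contributes $2$ to your $\sigma_s$ but only $1$ to $w_s\cdot\sigma$, so $w_s\cdot\sigma=-1\neq 0$ and $w_s$ does not lie in the lattice you have built. The same miscount is what forces you to hedge on the upper bound: with $(\sigma_{-1},\sigma_0)=(1,1)$ and $A_s=\{-1,0,1,\dotsc,s-2\}$ one gets $\sigma_s=2+\sigma_1+\dotsb+\sigma_{s-1}$, which genuinely violates the change-maker condition, and no appeal to $-1\in A_s\Leftrightarrow 0\in A_s$ can repair it. With the base case $(0,1)$ both problems disappear: orthogonality holds by construction, and $\sum_{i\in A_s}\sigma_i\leq\sigma_{-1}+\sigma_0+\sigma_1+\dotsb+\sigma_{s-2}=1+\sigma_1+\dotsb+\sigma_{s-2}$ gives the upper bound with no further argument.

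On indecomposability you are taking a harder road than necessary, and the road as sketched has a real hole: consecutive basis vectors need not pair nontrivially (for instance $w_s=-e_s+e_{s-1}$ and $w_{s+1}=-e_{s+1}+e_s+e_{s-1}$ are orthogonal), so connectivity of your ``interaction graph'' does not follow from the shared $e_{s-1}$ entries, and you acknowledge but do not close this gap. The short route is the one implicit in the paper: the recursion gives $\sigma_s\geq\sigma_{s-1}\geq\dotsb\geq\sigma_1\geq\sigma_0=1$ for all $s\geq 1$, and Lemma~\ref{lem:indecomp} states that a change-maker lattice with all $\sigma_s\geq 1$ is indecomposable. Although Lemma~\ref{lem:indecomp} appears later in the section, its proof does not depend on the present lemma, so there is no circularity in invoking it.
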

\begin{proof}
Take $\rho=e_{-1}-e_0$ as in the definition of a change-maker lattice. Define $\sigma_s'$, for $1\leq s\leq r$ inductively by
$$(\sigma_{-1}',\sigma_0')=(0,1)$$
and
$$\sigma_s' = \sigma_{s-1}' +\sum_{i\in A_s} \sigma_{i}'$$
By construction, $\{\sigma_1, \dotsc, \sigma_r'\}$ satisfies the change-maker condition. Set
$$\sigma'=e_0+ \sum_{s=1}^r \sigma_s'e_s,$$
then Lemma~\ref{lem:stdbasisisbasis} shows that
$$\langle w_1,\dotsc, w_r \rangle = \langle \sigma' , \rho \rangle^\bot,$$
as required.
\end{proof}

It will be useful to identify certain irreducible vectors in a change-maker lattice.
\begin{lem}\label{lem:cmirreducible}
Suppose we have $v= -e_k + \sum_{i\in A} e_i \in L$, where $A \subset \{-1, \dotsc , k-1\}$. Then $v$ is irreducible. In particular, all standard basis elements are irreducible. Additionally, if $\sigma_k$ is tight, then $v=-e_k + e_{k-1}+ \dotsb + e_2 + 2e_1$ is irreducible.
\end{lem}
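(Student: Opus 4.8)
The plan is to work entirely at the level of coordinates, writing each lattice element as $\sum_i (v\cdot e_i)e_i$ and exploiting that every element of $L$ is orthogonal to both $\sigma$ and $\rho$. The starting observation is purely arithmetic: if $v=x+y$ is any decomposition in $\mathbb{Z}^{r+2}$, then $x\cdot y=\sum_i (x\cdot e_i)(y\cdot e_i)$, and for each coordinate $i$ the integers $a=x\cdot e_i$, $b=y\cdot e_i$ satisfy $a+b=v\cdot e_i$. When $v\cdot e_i\in\{-1,0,1\}$ one checks directly that $ab\le 0$, since $ab=a(c-a)$ with $c\in\{-1,0,1\}$ is a nonpositive integer for every integer $a$. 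Hence for the first vector, whose coordinates all lie in $\{-1,0,1\}$, every summand of $x\cdot y$ is nonpositive, so $x\cdot y\ge 0$ forces $x\cdot y=0$ and moreover $(x\cdot e_i)(y\cdot e_i)=0$ for every $i$.

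First I would use this to conclude that $\supp(x)$ and $\supp(y)$ are disjoint and together partition $\supp(v)$, with $x$ and $y$ inheriting the corresponding coordinates of $v$. Since $v\cdot e_k=-1$, the index $k$ lies in exactly one of the two supports, say that of $x$; then $y=\sum_{i\in B}e_i$ for some $B\subseteq A$. Now I invoke $y\cdot\sigma=0$: as $\sigma\cdot e_i\ge 1$ for every $i\ge 0$ while $\sigma\cdot e_{-1}=0$, this forces $B\subseteq\{-1\}$, and $B=\{-1\}$ is impossible because $e_{-1}\cdot\rho=1\ne 0$. Hence $y=0$, contradicting $y\ne 0$, so $v$ is irreducible. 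The standard basis elements have exactly the asserted shape, so they are covered by this argument.

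For the tight vector $v=-e_k+e_{k-1}+\dots+e_2+2e_1$ the coordinate $e_1$ now carries coefficient $2$, so the termwise bound fails there: if $a+b=2$ then $ab$ can equal $1$. This refined analysis is the main obstacle. Writing $x\cdot y=(x\cdot e_1)(y\cdot e_1)+\sum_{i\ne 1}(x\cdot e_i)(y\cdot e_i)$ with every term of the sum nonpositive and the first term at most $1$, the inequality $x\cdot y\ge 0$ leaves only a few configurations, which I would organise by whether $(x\cdot e_1)(y\cdot e_1)\le 0$ or $(x\cdot e_1)(y\cdot e_1)=1$. The case $(x\cdot e_1)(y\cdot e_1)\le 0$ reduces, as before, to a support partition; feeding this into $x\cdot\sigma=y\cdot\sigma=0$ and using $\sigma\cdot e_1=1$ (which holds because tightness of $\sigma_k$ gives $v\cdot\sigma=\sigma_1-1$, so $v\in L$ forces $\sigma_1=1$) quickly produces a strictly positive summand, a contradiction.

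The genuinely delicate case is $(x\cdot e_1)(y\cdot e_1)=1$, i.e.\ $x\cdot e_1=y\cdot e_1=1$, where $x\cdot y\ge 0$ permits at most one further coordinate $j$ with $(x\cdot e_j)(y\cdot e_j)=-1$. Checking $a+b=v\cdot e_j$ against $ab=-1$ shows such a $j$ must lie outside $\supp(v)$, and the relations $x\cdot\rho=y\cdot\rho=0$ together with $v\cdot e_{-1}=v\cdot e_0=0$ rule out $j\in\{-1,0\}$, forcing $j>k$. After placing the coordinate $k$ in one of $x,y$ (the two choices handled separately, one being immediate from positivity of the $\sigma$-coordinates), the equations $x\cdot\sigma=y\cdot\sigma=0$ give $\sigma\cdot e_j=1+\sum_{i\in C}\sigma_i$ for some $C\subseteq\{2,\dots,k-1\}$, whence $\sigma\cdot e_j\le\sigma_k-1<\sigma_k$; but $j>k$ and monotonicity of the change-maker sequence give $\sigma\cdot e_j\ge\sigma_k$, the final contradiction. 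I expect this clash between monotonicity of $\sigma$ and the tightness identity $\sigma_k=2+\sum_{i=2}^{k-1}\sigma_i$ to be the crux of the argument.
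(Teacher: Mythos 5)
Your proposal is correct and follows essentially the same route as the paper's proof: the termwise bound $ab\le 0$ when $a+b\in\{-1,0,1\}$, the case split on $(x\cdot e_1)(y\cdot e_1)\in\{0,1\}$ for the tight vector, and the final clash between $\sigma_j\le\sigma_k-1$ (from $\sigma$-orthogonality plus tightness) and $\sigma_j\ge\sigma_k$ (from monotonicity). The only cosmetic difference is that the paper packages the $\sigma$- and $\rho$-orthogonality constraints as the preliminary observation that every non-zero element of $L$ has non-empty positive and negative support, whereas you invoke them coordinate-by-coordinate.
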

\begin{proof}
Since we are assuming $\sigma_s\geq 1$ for $s\geq 1$, any non-zero $y\in L$ must have $\suppp(y)$ and $\suppm(y)$ non-empty.

First consider $v= -e_k + \sum_{i\in A} e_i \in L$. If there are $x,y \in L$ such that $v = x+y$ and $x\cdot y \geq 0$, then write $x=\sum x_i e_i$ and $y = \sum y_i e_i$. If $a,b \in \mathbb{Z}$ satisfy $a+b \in \{-1,0,1\}$, then $ab\leq 0$. Thus $x\cdot y =0$ and hence $(x_i,y_i) \in \{(\pm 1,0),(0,\pm 1),(0,0)\}$, for all $i$.

We have $x_i+y_i \geq 0$, for $i\ne k$ and $x_k+y_k = -1$. So, without loss of generality, we may assume that $x_k=-1$. It follows that $y_i \geq 0$ for all i. By the remark at the start of the proof, this implies $y=0$. Thus $v$ is irreducible.

\paragraph{} Now take $v=-e_k + e_{k-1}+ \dotsb + e_2 + 2e_1$. Suppose we write $v=x+y$, $x,y\in L$ with $x\cdot y\geq0$. Write $x=\sum x_i e_i$ and $y = \sum y_i e_i$, where $y_i=v\cdot e_i-x_i$. We have
\begin{equation}\label{eqn:irredcomp}
x\cdot y=\sum_{i=-1}^{r}x_i y_i=-x_{-1}^2-x_{0}^2+(2-x_1)x_1+\sum_{i=2}^{k-1}(1-x_i)x_i -(x_k+1)x_k -\sum_{i=k+1}^r x_i^2.
\end{equation}
The only term in the right hand side of (\ref{eqn:irredcomp}) which can possibly be positive is the $(2-x_1)x_1$ summand and this is at most 1. So $x\cdot y\geq 0$ implies that $(2-x_1)x_1=x_1y_1\in \{0,1\}$.

If $x_1 y_1 =0$, then it follows that $x_i y_i=0$ for all $i$. Thus we must have $\suppm(x)=\emptyset$ or $\suppm(y)=\emptyset$, which implies $x=0$ or $y=0$.

We will show that $x_1 y_1=1$ cannot occur. If $x_1 y_1 = 1$, then $x_1=y_1=1$ and there can be at most one $0\leq l\leq r$ with $x_l y_l<0$ and this has $x_l y_l =-1$. Since $x$ and $y$ are non-zero in this case, we require $\suppm(x)$ and $\suppm(y)$ non-empty, so such an $l$ exists. Since $x_l+y_l=0$ and $x_0=x_{-1}$, it follows that $l>k$. Thus, without loss of generality, $\suppm(x)=l$ and $\suppm(y)=k$. But we must also have $\suppp(x)\subseteq \{1, \dotsc , k-1\}$. This is a contradiction, since it implies $\sigma_l<\sigma_k$. Thus we have $x=0$ or $y=0$ and so $v$ is irreducible.
\end{proof}

Finally, we derive useful conditions for $L$ to be indecomposable.
\begin{lem}\label{lem:indecomp}
The following are equivalent:
\begin{enumerate}[(i)]
\item $L$ is indecomposable;
\item $\sigma_s\geq 1$ for all $s\geq 1$;
\item there is no $v\in L$ with $\norm{v}=1$.
\end{enumerate}
\end{lem}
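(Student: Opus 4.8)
The plan is to establish the cycle by first disposing of the easy equivalence (ii)$\Leftrightarrow$(iii), then proving the substantive implication (ii)$\Rightarrow$(i), and finally handling the degenerate direction $\neg$(ii)$\Rightarrow\neg$(i). Since $\{e_{-1},\dots,e_r\}$ is orthonormal, the only vectors with $\norm{v}=1$ in $\mathbb{Z}^{r+2}$ are the $\pm e_i$, and such a vector lies in $L=\langle\rho,\sigma\rangle^\bot$ exactly when $e_i\cdot\rho=e_i\cdot\sigma=0$. As $e_{-1}\cdot\rho=1$ and $e_0\cdot\rho=-1$, neither $e_{-1}$ nor $e_0$ qualifies, while for $i\ge1$ we have $e_i\cdot\rho=0$ and $e_i\cdot\sigma=\sigma_i$. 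Hence $L$ contains a norm-one vector if and only if $\sigma_i=0$ for some $i$, and since $\sigma$ is non-decreasing this occurs precisely when $\sigma_1=0$, that is, exactly when (ii) fails. This gives (ii)$\Leftrightarrow$(iii).

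The heart of the lemma is (ii)$\Rightarrow$(i), and this is where I expect the main difficulty. Assume $\sigma_s\ge1$ for all $s\ge1$. By Lemma~\ref{lem:stdbasisisbasis} the standard basis $\{v_1,\dots,v_r\}$ is a basis of $L$, and by Lemma~\ref{lem:cmirreducible} each $v_s$ is irreducible. Suppose for contradiction that $L=L_1\oplus L_2$ is a nontrivial orthogonal splitting. Irreducibility forces each $v_s$ to lie entirely in $L_1$ or in $L_2$, and two basis vectors lying in different summands must be orthogonal. It therefore suffices to show that the graph on $\{1,\dots,r\}$ with an edge $\{s,t\}$ whenever $v_s\cdot v_t\neq0$ is connected: then all $v_s$ lie in a single summand, forcing the other to be zero, a contradiction. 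Connectivity will follow once I show that every $v_s$ with $s\ge2$ pairs nontrivially with some earlier $v_j$, $j<s$.

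The subtlety is that consecutive basis vectors need not be linked — for $\sigma=(1,1,2)$ one computes $v_2=-e_2+e_1$, $v_3=-e_3+e_2+e_1$ and hence $v_2\cdot v_3=0$ — so I must choose the partner $v_j$ carefully. If $v_s$ is slack, its support avoids $\{-1,0\}$, so I take $j=\min\suppp(v_s)\ge1$; then $v_s$ vanishes in every coordinate below $j$ while $v_j$ vanishes in every coordinate above $j$, leaving $v_s\cdot v_j=(v_s\cdot e_j)(v_j\cdot e_j)=-1\neq0$. If $v_s$ is tight it contains $e_{-1}+e_0+e_1$, and pairing it with the (tight) vector $v_1=-e_1+e_{-1}+e_0$ gives $v_s\cdot v_1=+1\neq0$. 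In either case $v_s$ connects backward to some $v_j$ with $j<s$, so the graph is connected and $L$ is indecomposable.

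Finally, for $\neg$(ii)$\Rightarrow\neg$(i): if (ii) fails then $\sigma_1=0$, so $e_1\in L$ with $\norm{e_1}=1$, and the orthogonal splitting $L=\langle e_1\rangle\oplus\langle e_1\rangle^\bot$ exhibits $L$ as decomposable, its complementary summand being nonzero whenever $r\ge2$. Combined with (ii)$\Leftrightarrow$(iii) and (ii)$\Rightarrow$(i), this closes the equivalence. I expect the only real obstacle to be the connectivity step of the second implication, and specifically the realisation — forced by the example above — that one cannot simply chain adjacent standard basis vectors but must instead route through the minimal element of $\suppp(v_s)$ in the slack case and through $v_1$ in the tight case.
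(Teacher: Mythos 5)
Your proof is correct and follows essentially the same route as the paper's: both rest on the irreducibility of the standard basis elements and on the observation that each $v_s$ pairs nontrivially with $v_{\min\supp(v_s)}$ when that index is positive (your slack case) and with $v_1$ when $\min\supp(v_s)=-1$ (your tight case), so the basis cannot be split between two orthogonal summands. Your connectivity-graph packaging is interchangeable with the paper's minimal-counterexample packaging, and your explicit treatments of (ii)$\Leftrightarrow$(iii) and of $\neg$(ii)$\Rightarrow\neg$(i) merely fill in steps the paper declares straightforward.
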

\begin{proof}
The implications (i) $\Rightarrow$ (ii) and (ii) $\Leftrightarrow$ (iii) are straightforward; observe that if $\sigma_s=0$ for some $s$ then we get a $\mathbb{Z}$ summand of $L$. It remains only to check (ii) $\Rightarrow$ (i). Consider a standard basis, $\{v_1, \dotsc, v_r \}$. If $L=L_1 \oplus L_2$, then, by irreducibility, we must have $v_i \in L_1$ or $v_i \in L_2$ for every standard basis element. Suppose $v_1 \in L_1$. Suppose $L_2 \ne 0$, so there is $v_k \in L_2$. In fact, we may assume that $k$ is minimal with this property. This means that $v_k\cdot v_i=0$ for $1\leq i <k$. Since $\sigma_k>0$, we know $m=\min \supp(v_k)<k$. If $m>0$, then $v_k\cdot v_m=-1$. If $m\leq 0$, then $m=-1$ and $v_k \cdot v_1=1$. In either case, this is a contradiction. Thus, we have $L_2=0$ and it follows that $L$ is indecomposable.
\end{proof}

\section{The Goeritz matrix}\label{sec:goeritz}
 Given a diagram $D$ of a knot $K$, we get a division of the plane into connected regions. We may colour these regions black and white in a chessboard manner. There are two possible choices of colouring, and each gives an incidence number, $\mu(c)\in \{\pm 1\}$, at each crossing $c$ of $D$, as shown in Figure \ref{fig:incidencenumber}.
 \begin{figure}[h]
  \centering
  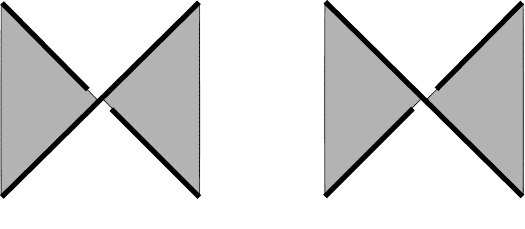
 \caption{The incidence number of a crossing.}
 \label{fig:incidencenumber}
\end{figure}
 We construct a planar graph, $\Gamma_D$, by drawing a vertex in each white region and an edge $e$ for every crossing $c$ between the two white regions it joins and we define $\mu(e):=\mu(c)$. We call this the {\em white graph} corresponding to $D$. This gives rise to a {\em Goeritz matrix}, $G_D=(G_{ij})$, defined by labeling the vertices of $\Gamma_D$, by $v_1,\dotsc , v_{r+1}$ and for $1\leq i,j \leq r$, set
 $$g_{ij}=\sum_{e \in E(v_i,v_j)}\mu(e)$$
 for $i\ne j$ and
 $$g_{ii} = - \sum_{e \in E(v_i, \Gamma_D\setminus v_i)}\mu(e)$$
 otherwise \cite[Chapter 9]{lickorish1997introduction}.

 \paragraph{} Although the Goeritz matrix depends on the choice of diagram, its determinant does not. Hence we can define the {\em determinant} of $K$ by
 $$\det K:= |\det G_D|.$$
 The other invariant we wish to compute from the Goeritz matrix is the knot signature.
 \begin{prop}[Gordon and Litherland \cite{gordon1978signature}]\label{prop:sigformula}
 Let $D$ be a diagram for a knot $K$ that is shaded so that there are $n_+$ positive crossings of incidence -1 and $n_-$ negative crossings of incidence +1. Let $\operatorname{sig}G_D$ be the signature of the Goeritz matrix $G_D$. Then the knot signature can be computed by the formula,
 \begin{equation*}
\sigma(K)=\operatorname{sig}G_D + n_- -n_+.
\end{equation*}
\end{prop}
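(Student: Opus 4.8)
The plan is to recognise the Goeritz matrix as a matrix for the Gordon--Litherland form of a checkerboard spanning surface, and then to invoke the signature theorem of Gordon and Litherland, whose correction term I will identify as $n_- - n_+$. Accordingly, I would take $F$ to be the shaded spanning surface determined by the given colouring: glue the shaded regions together along a half-twisted band at each crossing. This is a (possibly non-orientable) surface with $\partial F = K$, and an Euler characteristic count using $c = b + w - 2$ (with $b,w$ the numbers of shaded and unshaded regions and $c$ the number of crossings) shows that $H_1(F;\mathbb{Z}) \cong \mathbb{Z}^r$ has a basis $\gamma_1, \dots, \gamma_r$ of loops, one encircling each of the $r$ unshaded regions labelled $v_1, \dots, v_r$. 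On $H_1(F)$ one has the Gordon--Litherland form $\mathcal{G}_F(a,b) = \operatorname{lk}(a, b^{\updownarrow})$, where $b^{\updownarrow}$ is the pushoff of $b$ in both normal directions of $F$; this is a symmetric, well-defined integral form even when $F$ is non-orientable.

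The first computation is to check that $\mathcal{G}_F(\gamma_i, \gamma_j) = g_{ij}$. This is a direct linking-number count: the loops $\gamma_i$ and $\gamma_j$ interact only at the twisted bands through which they run, and each crossing adjoining the unshaded regions $v_i$ and $v_j$ contributes its incidence number $\mu(c)$, reproducing the off-diagonal formula $g_{ij} = \sum_{e \in E(v_i, v_j)} \mu(e)$ and, by the same token, the diagonal formula $g_{ii} = -\sum \mu$. Hence $G_D$ represents $\mathcal{G}_F$ and $\operatorname{sig} G_D = \operatorname{sig} \mathcal{G}_F$. The second computation is the normal Euler number $e(F)$, the self-linking of $K$ with respect to the framing induced by $F$. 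Pushing $K$ off along $F$ and summing crossing contributions, each crossing contributes $\pm 2$ according to the combination of its sign (from the orientation of $K$) and its incidence number $\mu(c)$; grouping the four crossing types, the only surviving contributions come from positive crossings of incidence $-1$ and negative crossings of incidence $+1$, giving $e(F) = 2(n_+ - n_-)$, so that $-\tfrac{1}{2} e(F) = n_- - n_+$. Finally, the Gordon--Litherland theorem asserts $\sigma(K) = \operatorname{sig} \mathcal{G}_F - \tfrac{1}{2} e(F)$, and substituting the two computations yields $\sigma(K) = \operatorname{sig} G_D + n_- - n_+$.

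The main obstacle is the Gordon--Litherland theorem itself rather than the bookkeeping, and its proof is not lattice-theoretic. One pushes the interior of $F$ into $B^4$ and forms the double cover $W_F$ of $B^4$ branched over the pushed-in surface; then $\partial W_F = \Sigma(K)$, the intersection form of $W_F$ is, after an orientability correction, isomorphic to $\mathcal{G}_F$, and a signature computation---via the $G$-signature theorem, or via Novikov additivity applied to this branched cover---relates $\sigma(W_F)$ to $\sigma(K)$ with precisely the Euler-number defect $\tfrac{1}{2} e(F)$. Since the proposition is attributed to Gordon and Litherland, I would cite this theorem and present only the two explicit computations above. The delicate point that must nonetheless be handled with care is the sign convention tying the crossing sign and the incidence number $\mu(c)$ to the $\pm 2$ contributions in $e(F)$, since it is exactly this asymmetry that separates the two distinguished crossing types and produces the term $n_- - n_+$ rather than a symmetric combination.
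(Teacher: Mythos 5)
Your proposal is correct: the paper offers no proof of this proposition, simply citing Gordon and Litherland, and your derivation (identifying $G_D$ with the Gordon--Litherland form of the shaded checkerboard surface, computing $e(F)=2(n_+-n_-)$, and invoking their signature theorem $\sigma(K)=\operatorname{sig}\mathcal{G}_F-\tfrac{1}{2}e(F)$) is exactly the standard route behind the cited result. The bookkeeping checks out, e.g.\ on the right-handed trefoil with the all-$\mu=-1$ colouring one gets $\sigma = 1 + 0 - 3 = -2$ as it should.
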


 \paragraph{} Now suppose that $K$ is an alternating knot. If $D$ is any alternating diagram, then we may fix the colouring so that $\mu(c)=-1$ for all crossings. In this case, $G_D$ defines a positive-definite bilinear form. This in turn gives a lattice, $\Lambda_D$ which we will refer to as the {\em white lattice} of $D$. Observe that if $D$ is reduced (i.e. contains no nugatory crossings), then $\Gamma_D$ contains no self-loops or cut-edges and $\Lambda_D$ is isomorphic to the graph lattice $\Lambda(\Gamma_D)$. Moreover, if $D$ and $D'$ are any two reduced diagrams, then one can be obtained from another by a sequence of flypes \cite{Menasco93classification} and it is not hard to see that $\Lambda_D\cong \Lambda_{D'}$. The following lemma allows us to detect certain flypes algebraically and, provides an explicit isomorphism by relating the vertices of $\Gamma_D$ and $\Gamma_{D'}$.

 \begin{lem}\label{lem:flype} Let $D$ be a reduced alternating diagram. Suppose $\Gamma_D$ is 2-connected and has a vertex $v$, which can be written as $v=x+y$, for some $x,y \in \Lambda_D$ with $x\cdot y=-1$. Then there are unique vertices, $u_1,u_2\ne v$, satisfying $u_1\cdot x, u_2\cdot y>0$, and there is a flype to a diagram $D'$ and an isomorphism $\Lambda_{D'}\cong \Lambda_{D}$, such that the vertices of $\Gamma_{D'}$ are obtained from those of $\Gamma_D$ by replacing $v,u_1$ and $u_2$, with $x,y$ and $u_1+u_2$.
 \end{lem}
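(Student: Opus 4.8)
The statement is essentially the topological (diagrammatic) counterpart of the purely lattice-theoretic Lemma~\ref{lem:cutedge}, so my strategy is to reduce the geometry to that algebra and then read off the flype. The hypotheses of Lemma~\ref{lem:flype} are exactly those of Lemma~\ref{lem:cutedge} (since $\Lambda_D \cong \Lambda(\Gamma_D)$ for a reduced alternating diagram, and a reduced diagram has no cut-edges), so I would begin by invoking Lemma~\ref{lem:cutedge}. It hands me $x=[R]+v$ and $y=[S]+v$ where $R,S$ partition $V\setminus\{v\}$, a unique cut-edge $e$ of $G\setminus\{v\}$ joining $R$ to $S$, and the unique vertices $u_1\in R$, $u_2\in S$ that are its endpoints, with $u_1\cdot x, u_2\cdot y>0$ and all other vertices pairing non-positively with both $x$ and $y$. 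This immediately gives the uniqueness-of-$u_1,u_2$ clause of the present lemma. The real content is then to interpret the combinatorial configuration $(v;R,S;e)$ as a flypeable tangle in the diagram $D$.

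\textbf{From the lattice to the diagram.}
The key translation is that $v$ is a vertex of the white graph $\Gamma_D$, i.e.\ an unshaded region of $D$, and the edges incident to $v$ are the crossings on the boundary of that region. The vertex $v$ of $G\setminus\{v\}$ being a cut-vertex whose removal, after further deleting the single edge $e$, splits the remaining graph into the two pieces $R$ and $S$ tells me that in $D$ the region $v$ together with the crossing $e$ separates the diagram into two tangles $T_R$ and $T_S$, meeting the region $v$ and meeting each other only at the crossing $e$. Concretely, the two crossings of $D$ corresponding to the edges $vu_1$ and $vu_2$ (the unique edges realizing $u_1\cdot x=1$ and $u_2\cdot y=1$), together with the crossing $e$, bound a configuration in which a tangle is clasped into the region $v$ by a single crossing $e$ — precisely the picture in Figure~\ref{fig:flypedef}. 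I would verify this by checking that $2$-connectedness of $\Gamma_D$ forces the region $v$ to be adjacent to both tangles and that the absence of cut-edges in $\Gamma_D$ (so that $e$ is not a cut-edge of $\Gamma_D$ itself, only of $G\setminus\{v\}$) guarantees the two tangles are genuinely linked through $v$, so that a flype is available and non-trivial.

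\textbf{Performing the flype and computing the new lattice.}
Once the flypeable region is located, performing the flype across the crossing $e$ produces a new reduced alternating diagram $D'$. The effect of a flype on the white graph is a local modification at $v$ and its neighbours: the region $v$ is slid around the tangle, which amounts to re-attaching the edges of $R$ and $S$ so that the new white graph $\Gamma_{D'}$ has, in place of $\{v,u_1,u_2\}$, the three vertices $x=[R]+v$, $y=[S]+v$, and $u_1+u_2$. I would make the isomorphism $\Lambda_{D'}\cong\Lambda_D$ explicit by sending these three new vertices to the named elements of $\Lambda_D$ and fixing every other vertex, then check that this respects the bilinear form using the pairing data supplied by Lemma~\ref{lem:cutedge} (namely $x\cdot y=-1$, $u_1\cdot x=u_2\cdot y=1$, and $w\cdot x,w\cdot y\le 0$ for all other $w$) together with formula~\eqref{eq:prod1}. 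The main obstacle, and where I would spend the most care, is precisely this geometric identification step: confirming that the abstract graph-surgery (replacing $v,u_1,u_2$ by $x,y,u_1+u_2$) is realized by an honest flype on the diagram rather than by some non-planar graph move. This requires tracking the planar embedding of $\Gamma_D$ through the tangle decomposition and checking that the cut-edge $e$ of $G\setminus\{v\}$ corresponds to the distinguished crossing of a flype region; the algebraic bookkeeping of the new adjacencies is then routine given Lemma~\ref{lem:cutedge}.
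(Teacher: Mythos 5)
Your proposal follows the paper's proof essentially verbatim: invoke Lemma~\ref{lem:cutedge} to extract the cut edge $e$ of $\Gamma_D\setminus\{v\}$ together with the decomposition $x=[R]+v$, $y=[S]+v$, and then realise the graph surgery replacing $v,u_1,u_2$ by $x,y,u_1+u_2$ as a flype about the crossing $e$ and the region $v$. One small inaccuracy worth noting: the unique edge witnessing $u_1\cdot x=u_2\cdot y=1$ is the cut edge $e$ itself (joining $u_1$ to $u_2$), not edges $vu_1$ and $vu_2$, which need not exist in $\Gamma_D$ --- but this is a descriptive slip that does not affect the argument.
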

 \begin{proof}
 By Lemma \ref{lem:cutedge}, there is a cut edge, $e$, in $\Gamma_D\setminus\{v\}$ between $u_1$ and $u_2$ satisfying $u_1\cdot x=u_2\cdot y=1$. Thus there are subgraphs $G_1$ and $G_2$ such that $x=v+u_1+\sum_{z\in G_1}z$ and $y=v+u_2+\sum_{z\in G_2}z$. For $z_1\in G_1$, we have $z_1\cdot x=0$ and $z_1\cdot y= z_1\cdot v$. For $z_2\in G_2$, we have $z_2\cdot y=0$ and $z_2\cdot x=z_2\cdot v$. Thus, it follows that replacing $v,u_1$ and $u_2$ by $x,y$ and $u_1+u_2$ gives the set of vertices for the graph $\widetilde{G}$, obtained by replacing $v$ by two vertices with an edge $e'$ between them, in such a way that $\{e',e\}$ is a cut set, and then contracting $e$. This graph is planar, and can be drawn in the plane so that $\widetilde{G}=\Gamma_{D'}$ for $D'$, obtained by a flype about the crossing corresponding to the edge $e$ and the region corresponding to $v$. See Figure~\ref{fig:flype}.
 \begin{figure}[h]
  \centering
  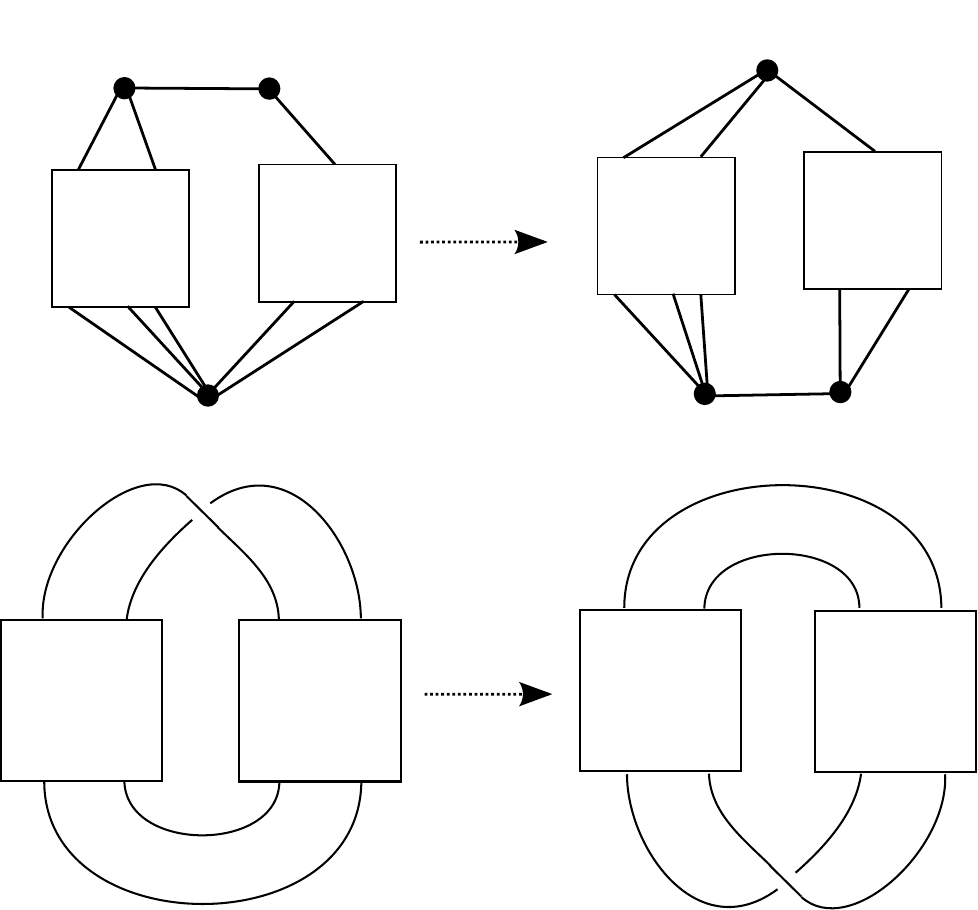
 \caption{The flype and new embeddings given by Lemma \ref{lem:flype}, where $\overline{G}_1$, is the reflection of $G_1$ about a vertical line and $\overline{T}_1$ is the corresponding tangle (which can also be obtained by rotating $T_1$ by an angle of $\pi$ about a vertical line).}
 \label{fig:flype}
\end{figure}
\end{proof}

\section{Branched double covers and surgery}\label{sec:branchedcovers}
Let $K\subset S^3$ be any knot and $\Sigma(K)$ its branched double cover. If $K$ has unknotting number one, then the Montesinos trick tells us that $\Sigma(K)$ can be constructed by half-integer surgery on a knot in $S^3$ \cite{montesinos1973variedades}. We state the precise version that we require (c.f. \cite{Greene3Braid,ozsvath2005knots}.)
\begin{prop}\label{prop:Montesinos}
Suppose that $K$ can be unknotted by changing a negative crossing to a positive one. Then there is a knot $\kappa \subset S^3$ such that $\Sigma(K)=S_{-\delta/2}^3(\kappa)$ where $\delta=(-1)^{\sigma(K)/2}\det(K)$.
\end{prop}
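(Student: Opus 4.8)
The plan is to reconstruct the Montesinos trick locally at the changed crossing and then to pin down the surgery slope using the homological invariants $\det K$ and $\sigma(K)$. First I would isolate the crossing being changed inside a ball $B\subseteq S^3$, so that $(B,K\cap B)$ is a trivial two-string tangle carrying the crossing while the rest of $K$ lies outside $B$. The hypothesised crossing change replaces $K\cap B$ by a new pair of arcs and leaves $K$ fixed outside $B$, producing the unknot $U$. The key local fact is that the double cover of a ball branched over a trivial two-string tangle is a solid torus; writing $N=\Sigma(S^3\setminus\mathring B,\,K\setminus B)$ and $\Sigma(K)=N\cup_{T^2}V$ for a solid torus $V$, the crossing change alters only the filling of the torus boundary, so that $\Sigma(U)=N\cup_{T^2}V'$ for a second solid torus $V'$. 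Since $U$ is the unknot, $\Sigma(U)=S^3$; letting $\kappa\subseteq S^3$ be the core of $V'$, we may therefore view $\Sigma(K)$ as a Dehn surgery on $\kappa$.

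Next I would identify the slope. The two fillings corresponding to $K$ and to $U$ are the branched double covers of the two rational tangles differing by the crossing change, and a standard computation shows that the resulting filling slopes on $T^2$ have geometric intersection number $2$. Taking the meridian of $\kappa$ to be the slope that gives $\Sigma(U)=S^3$, this means $\Sigma(K)=S^3_{p/2}(\kappa)$ is a genuine half-integer surgery, the denominator $2$ recording that the two slopes meet twice. The absolute value of $p$ is then fixed by first homology: $|H_1(\Sigma(K))|=\det K$ while $|H_1(S^3_{p/2}(\kappa))|=|p|$, so $|p|=\det K$ and hence $p=\pm\det K$. It remains only to determine the sign of $p$.

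Pinning down this sign --- equivalently, showing $p=-\delta$ with $\delta=(-1)^{\sigma(K)/2}\det K$ --- is the main obstacle, since the construction so far is insensitive to orientation conventions. The strategy I would use is four-dimensional. Pushing a Seifert surface for $K$ into $B^4$ and taking the double cover branched over it produces a smooth $4$-manifold $W$ with $\partial W=\Sigma(K)$ and $\sigma(W)=\sigma(K)$, while the cobordism realising the crossing change lifts to a single two-handle attachment along $\kappa$ whose framing sign is dictated by the sign of the changed crossing. Fixing the orientation of $\Sigma(K)$ as $\partial W$ and comparing the two descriptions then determines both the sign of the half-integer coefficient and the way $\sigma(K)$ enters, producing the factor $(-1)^{\sigma(K)/2}$ and the coefficient $-\delta/2$. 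This orientation bookkeeping, rather than any new idea, is where the real care lies; the underlying topology is the classical argument of Montesinos \cite{montesinos1973variedades}, in the refined form used in \cite{ozsvath2005knots,Greene3Braid}.
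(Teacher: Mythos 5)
The paper does not actually prove this proposition: it is quoted as a known result, with the underlying construction attributed to Montesinos and the signed refinement to Ozsv\'ath--Szab\'o and Greene (``We state the precise version that we require''). So the comparison is really between your outline and the proofs in those references. Your reconstruction of the classical, unsigned part is correct and is the standard argument: the crossing ball lifts to a solid torus in the branched double cover, the crossing change replaces one filling of that torus by another at distance two, $\Sigma(U)=S^3$ identifies the complement as a knot exterior, and $|H_1(\Sigma(K))|=\det K$ pins down $|p|$. This yields $\Sigma(K)=S^3_{\pm\det K/2}(\kappa)$, which is all that Montesinos' original argument gives.

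The gap is exactly where you concede the ``real care'' lies, and it is more than orientation bookkeeping. The sign --- the factor $(-1)^{\sigma(K)/2}$ --- is the only content of the statement beyond the classical trick, and it is the part the paper actually needs (it drives the sign conventions in Theorem~\ref{thm:technical} and the positivity hypotheses feeding into Theorem~\ref{thm:GreeneLspaceversion}). Moreover, the mechanism you sketch has a concrete inaccuracy: the crossing change is effected by $\mp 1$-surgery on an unknot $c$ encircling the two strands, and since $\operatorname{lk}(c,K)$ is even, $c$ lifts to a \emph{two-component} link in the double cover; the lifted cobordism is therefore built from two $2$-handles, not ``a single two-handle attachment along $\kappa$.'' Indeed, a single $2$-handle attached to $B^4$ can only produce integer surgery on its boundary --- the two lifted handles are precisely why the coefficient acquires denominator $2$. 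To nail the sign one must specify the framings of the two lifts, assemble the resulting definite $4$-manifold (equivalently, the linear plumbing realising $p/2$-surgery), and compare its signature with the Gordon--Litherland/Goeritz computation of $\sigma(K)$, using that $\sigma$ changes by $0$ or $\mp 2$ under a crossing change; your appeal to the branched double cover of $B^4$ over a pushed-in Seifert surface is a reasonable ingredient but is never connected to the surgery picture by the argument as written. As it stands, the proposal proves the proposition only up to the sign of the surgery coefficient.
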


\paragraph{} The form of Greene's obstruction to half-integer surgery that will be most useful for our purposes is the following.
\begin{thm}[Greene, \cite{Greene3Braid}]\label{thm:GreeneLspaceversion}
Let $Y$ be an $L$-space bounding a sharp, simply-connected, positive-definite manifold, $X$, with intersection form $Q_X$ of rank $r$. Suppose also that there is a knot $\kappa\subset S^3$, such that $-Y=S_{\delta/2}^3(\kappa)$ and that
$$d(-Y,\spincs_0)=d(S_{\delta/2}^3(U),\spincs_0),$$
where $\spincs_0\in \spinc(Y)$ is the spin structure.
Then there is a change-maker lattice such that
$$Q_X \cong \langle \rho , \sigma \rangle ^\bot\subseteq \mathbb{Z}^{r+2}.$$
\end{thm}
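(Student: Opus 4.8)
The plan is to assemble the two ingredients named in the statement: Donaldson's diagonalization theorem to pin down the rank and produce a lattice embedding, and the Heegaard Floer correction terms of a half-integer surgery to force the change-maker condition. The first move is to manufacture a \emph{second} definite filling of $Y$ out of the surgery hypothesis. Since $-Y=S^3_{\delta/2}(\kappa)$ and $\det K=d$ is odd, the fraction $\delta/2$ admits a continued fraction expansion of length two whose final entry is $2$. Realising this expansion by attaching two $2$-handles to $B^4$ — one along $\kappa$ and one along a meridional unknot with framing $2$ — produces a simply-connected, positive-definite $4$-manifold $Z$ with $\partial Z=-Y$ and $b_2(Z)=2$ (orienting everything so that $Z$ is positive-definite; the case $\delta<0$ is handled by reflecting). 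The two generators of $Q_Z$ are linked once, and the generator coming from the framing-$2$ unknot has square $2$.

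Next I would glue. Because $Y$ is an $L$-space, hence a rational homology sphere, the closed manifold $N=X\cup_Y Z$ is simply connected and positive-definite with $b_2(N)=r+2$, and a Mayer--Vietoris argument identifies $H_2(X)$ and $H_2(Z)$ with sublattices whose forms are $Q_X$ and $Q_Z$. Donaldson's Theorem A \cite{donaldson1983application} then forces $Q_N\cong\mathbb{Z}^{r+2}=\langle e_{-1},\dots,e_r\rangle$. Writing $\rho,\sigma$ for the images of the two generators of $Q_Z$, we obtain an embedding under which $Q_X\cong\langle\rho,\sigma\rangle^\bot$. Since the only norm-$2$ vectors of the standard lattice are $\pm e_i\pm e_j$, after a sign-change of the orthonormal basis I may take $\rho=e_{-1}-e_0$ with $\norm{\rho}=2$; the linking relation $\sigma\cdot\rho=-1$, after adding a multiple of $\rho$ (which does not alter $\langle\rho,\sigma\rangle$), normalises $\sigma=e_0+\sigma_1e_1+\dots+\sigma_re_r$. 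This yields every assertion of the theorem except the change-maker condition on $\sigma$.

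That condition is where the Floer input enters, and I would obtain it by computing the correction terms of $-Y=S^3_{\delta/2}(\kappa)$ in two ways. On the surgery side, the rational surgery formula of Ozsv\'ath--Szab\'o \cite{ozsvath2005knots} writes each $d(-Y,\spincs)$ as the corresponding unknot value $d(S^3_{\delta/2}(U),\spincs)$ corrected by $-2V_i(\kappa)$ for a non-increasing sequence of non-negative integers $V_i(\kappa)$; the hypothesis $d(-Y,\spincs_0)=d(S^3_{\delta/2}(U),\spincs_0)$ forces the leading correction term to vanish. On the filling side, sharpness of $X$ means exactly that each $d(Y,\spinct)$ is realised by a characteristic covector of minimal square in its class, and transporting this through $Q_X\hookrightarrow\mathbb{Z}^{r+2}$ re-expresses the same numbers as minima of $(c^2-(r+2))/4$ over characteristic $c$ with prescribed pairings against $\rho$ and $\sigma$.

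The hard part will be the combinatorial translation of the resulting equality of these two families of numbers into the inequalities $0\leq\sigma_1\leq 1$ and $\sigma_{i-1}\leq\sigma_i\leq\sigma_1+\dots+\sigma_{i-1}+1$. The mechanism I expect to use is that the extremal covectors realising the sharp $d$-invariants must have coordinates in $\{\pm1\}$ off the support of $\sigma$, so minimising $c^2$ in each $\spinc$-class reduces to choosing, for a target residue $k$, a subset of the $\sigma_i$ summing to $k$; the $d$-invariant identities then assert that \emph{every} admissible $k$ is so attained, which is precisely the subset-sum representability of Brown's Proposition and hence equivalent to the change-maker condition. Making this rigorous demands careful bookkeeping of the correspondence between the $\spinc$-structures on $-Y$, the surgery labels $i$, and the characteristic covectors of $\mathbb{Z}^{r+2}$, together with the monotonicity and subadditivity of the $V_i(\kappa)$. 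It is this identification, rather than any single inequality, that is the crux, and it is where both the sharpness of $X$ and the $L$-space hypothesis on $Y$ are indispensable.
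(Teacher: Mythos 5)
First, a caveat about the comparison itself: the paper does not prove this statement. It is imported verbatim from Greene \cite{Greene3Braid}, and the only thing proved locally is Lemma~\ref{lem:deficiencies}, which shows the $d$-invariant hypothesis is automatically satisfied when $Y$ is an $L$-space. So there is no in-paper proof to measure you against; the comparison has to be with Greene's argument. Against that benchmark your outline reproduces the correct architecture: cap $-Y$ off with the positive-definite two-handlebody realising the continued fraction $\delta/2=\frac{\delta+1}{2}-\frac{1}{2}$, glue to $X$, apply Donaldson's Theorem A to the resulting closed positive-definite manifold, identify $Q_X$ with the orthogonal complement of the images $\rho,\sigma$ of the two handle classes (one should also verify that $Q_X$ is the \emph{full} complement rather than a finite-index sublattice of it, which follows from a discriminant count since both have discriminant $\delta$), and then feed in the correction terms via sharpness and the rational surgery formula.

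The genuine gap is that the step carrying essentially all of the content is described but not performed. The Donaldson half of the argument is routine; the theorem \emph{is} the extraction of the change-maker inequalities $\sigma_{i-1}\le\sigma_i\le 1+\sigma_1+\dots+\sigma_{i-1}$ from the comparison of the two computations of the correction terms, and this is deferred to ``careful bookkeeping.'' Two concrete symptoms. First, your normalisation of $\sigma$ is not purely lattice-theoretic: writing $\rho=e_{-1}-e_0$, the span $\langle\rho,\sigma\rangle$ contains a vector with $e_{-1}$-coordinate $0$ and $e_0$-coordinate $1$ only if $\sigma\cdot e_{-1}+\sigma\cdot e_0=\pm1$, which does not follow from $\sigma\cdot\rho=\pm1$ alone (consider $\sigma=2e_{-1}+e_0+\dots$); pinning down the bottom coordinates of $\sigma$ is already part of the Floer-theoretic content, not of the basis change. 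Second, the hypothesis $d(-Y,\mathfrak{s}_0)=d(S^3_{\delta/2}(U),\mathfrak{s}_0)$ does not kill the ``leading'' correction term: writing $\delta=4k\pm1$, it gives $V_i=0$ only for $i\ge k$ (this is exactly the computation in Lemma~\ref{lem:deficiencies}), while $V_0$ is the largest of the $V_i$ and is in general non-zero. The change-maker condition must therefore be wrung out of the full family of sharpness identities across all $\spinc$-structures, and until that is done the proposal is a faithful roadmap of Greene's proof rather than a proof.
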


As we show in Lemma~\ref{lem:deficiencies}, the condition on the $d$-invariants of the spin structures is redundant, since it is automatically satisfied when $Y$ is an $L$-space. Improvements in the understanding of the symmetries of $d$-invariants mean that the hypothesis that $Y$ is an $L$-space is not actually necessary for the above theorem to hold \cite{gibbons2013deficiency}. Since we are concerned with an application where $Y$ is an $L$-space, we will not require any such generalization.

\begin{lem}\label{lem:deficiencies}
Let $Y$ be a $L$-space with $-Y=S_{\delta/2}^3(\kappa)$. Let $\spincs_0$ be the spin structure, then
$$d(-Y,\spincs_0)=d(S_{\delta/2}^3(U),\spincs_0).$$
\end{lem}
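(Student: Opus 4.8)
The plan is to reduce the statement to the non-negativity of the correction terms $V_j(\kappa)$ together with the slope constraint forced by the $L$-space hypothesis. First I would reduce to the case of positive slope. Reversing orientation preserves the property of being an $L$-space and negates $d$-invariants, and for the unknot one has $d(S^3_{r}(U),\spincs_0)=-d(S^3_{-r}(U),\spincs_0)$. Hence it suffices to treat $\delta>0$: if $\delta<0$, apply the result to the mirror $\overline\kappa$, for which $Y=S^3_{|\delta|/2}(\overline\kappa)$ is a positive-slope $L$-space surgery, and then negate. So assume $-Y=S^3_{p/2}(\kappa)$ is an $L$-space with $p=\delta>0$ odd (recall $p=\det K$ is odd).

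The main tool is the rational surgery formula for correction terms of Ni and Wu: for $0\le i\le p-1$,
\[
d\big(S^3_{p/2}(\kappa),i\big)=d\big(S^3_{p/2}(U),i\big)-2\max\{V_{\lfloor i/2\rfloor},\,V_{\lceil (p-i)/2\rceil}\},
\]
where the $V_j=V_j(\kappa)$ form a non-increasing sequence of non-negative integers with $V_j=0$ for all $j\ge g(\kappa)$. The two arguments of the maximum are interchanged by conjugation of $\spinc$ structures, so the self-conjugate (spin) structure $\spincs_0$ is the one at which they agree. Solving $\lfloor i/2\rfloor=\lceil (p-i)/2\rceil$ with $p$ odd, I expect to identify the balanced index $i_0=(p+1)/2$, at which both arguments equal $j_0=\lfloor (p+1)/4\rfloor$. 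This reduces the lemma to the single assertion $V_{j_0}=0$, since then $d(-Y,\spincs_0)=d(S^3_{p/2}(U),\spincs_0)-2V_{j_0}$.

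The crux is to place $j_0$ at or beyond the genus. Because $S^3_{p/2}(\kappa)$ is an $L$-space of positive slope, the slope must satisfy $p/2\ge 2g(\kappa)-1$ (a positive $L$-space surgery exists only for slopes at least $2g-1$, by the work of Ozsv\'ath--Szab\'o and Rasmussen), so $p\ge 4g(\kappa)-2$. Here the odd parity of $p$ is decisive: since $4g-2$ is even, in fact $p\ge 4g(\kappa)-1$, and therefore $j_0=\lfloor (p+1)/4\rfloor\ge \lfloor 4g(\kappa)/4\rfloor=g(\kappa)$. Consequently $V_{j_0}=0$, which yields the desired equality.

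I expect the main obstacle to be bookkeeping rather than anything conceptual: pinning down the labelling of $\spinc$ structures in the surgery formula so that the self-conjugate one is correctly matched with the balanced index $i_0$, since an off-by-one there would shift $j_0$ and could destroy the borderline inequality $j_0\ge g(\kappa)$; and keeping the orientation conventions consistent in the reduction to $\delta>0$. The essential quantitative input is the interaction between the odd parity of $p=\det K$ and the $L$-space slope bound, which is precisely what upgrades $p\ge 4g-2$ to $p\ge 4g-1$ and makes $j_0\ge g(\kappa)$ hold with no room to spare.
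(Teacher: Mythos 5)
Your proposal is correct and follows essentially the same route as the paper: the Ni--Wu surgery formula for $d$-invariants, identification of the spin structure with the balanced index $(\delta+1)/2$, and the $L$-space slope bound $\delta/2\geq 2g(\kappa)-1$ combined with the odd parity of $\delta$ to force the relevant index ($k$ in the paper's notation $\delta=4k\pm1$, your $j_0$) to be at least $g(\kappa)$, so that $V_{j_0}=0$. The only cosmetic differences are that you phrase the maximum as $\max\{V_{\lfloor i/2\rfloor},V_{\lceil(\delta-i)/2\rceil}\}$ where the paper uses $H$ with $H_{-i}=V_i$, and that you make the reduction to positive slope explicit.
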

\begin{proof}
First, we appeal to Corollary 1.4 of \cite{ozsvath2011rationalsurgery}, which shows that
$$\frac{\delta}{2}\geq 2g(\kappa)-1,$$
where $g(\kappa)$ is the genus of $\kappa$. Writing $\delta=4k\pm1$, this implies that
$$g(\kappa) \leq k.$$

There are affine correspondences \cite{ozsvath2011rationalsurgery},
$$\spinc(-Y)\longleftrightarrow \mathbb{Z}/\delta \mathbb{Z} \longleftrightarrow \spinc(S_{\delta/2}^3(U)),$$
under which the following formula holds for $0\leq i \leq \delta-1$,
$$d(-Y,i)-d(S_{\delta/2}^3(U),i)=-2\max \{V_{\lfloor \frac{i}{2} \rfloor}, H_{\lfloor\frac{i-\delta}{2}\rfloor}\},$$
where $H_i,V_i$ are integers satisfying $H_{-i}=V_{i}$ and $V_i=0$ for $i \geq g(\kappa)$ \cite{ni2010cosmetic}.

Under the correspondences in use, $\spincs_0$ is identified with $c= \frac{\delta+1}{2}$. Since
$$\left\lfloor \frac{c}{2} \right\rfloor =-\left\lfloor \frac{c-\delta}{2} \right\rfloor =k\geq g(\kappa),$$
the lemma follows.
\end{proof}

\section{Knots and change-maker lattices}\label{sec:mainsection}
In this section we will prove our main results. The objective is to show that if $D$ is a reduced alternating diagram with
$$\Lambda_D \cong \langle \rho , \sigma \rangle ^\bot\subseteq \mathbb{Z}^{r+2},$$
then $D$ contains an unknotting crossing.
Supposing we have such a $D$, we will use $L$ to denote the change-maker lattice and fix a choice of isomorphism
$$\iota_D : \Lambda_D \longrightarrow L= \langle \rho , \sigma \rangle ^\bot\subseteq \mathbb{Z}^{r+2}.$$
This gives us a distinguished collection of vectors in $L$ given by the image of the $r+1$ vertices of $\Gamma_D$. We call this collection $V_D$, and in an abuse of notation we sometimes fail to distinguish between a vertex of $\Gamma_D$ and the corresponding element in $V_D$.

\paragraph{} Since $D$ is minimal, $\Gamma_D$ contains no self loops or cut edges. In particular, there are no vectors of norm 1 in $\Lambda_D$, so $\Lambda_D$ is indecomposable by Lemma~\ref{lem:indecomp}. Lemma~\ref{lem:2connectgraphlat} implies that $\Gamma_D$ is 2-connected and any $v\in V_D$ is irreducible.

\paragraph{} It will be necessary for us to flype $D$ to obtain new minimal diagrams. In all cases, this flype will be an application of Lemma~\ref{lem:flype}. If $D'$ is the new diagram we obtain from such a flype, then Lemma~\ref{lem:flype} gives a natural choice of $V_{D'}\subset L$ and hence an isomorphism,
$$\iota_{D'}:\Lambda_{D'}\longrightarrow L.$$
We will implicitly use this choice of isomorphism and speak of $V_{D'}$ without ambiguity.

\begin{figure}[h]
  \centering
  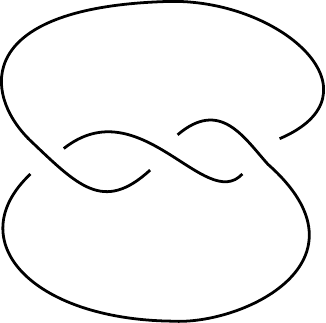
 \caption{The right-handed trefoil}
 \label{fig:trefoil}
\end{figure}

\begin{example}\label{rem:trefoilexample}
If $D$ is a reduced alternating diagram for the right-handed trefoil, then $\Gamma_D$ is a graph with two vertices and three edges and we have
$$\Lambda_D\cong\langle e_1+e_0, e_{-1}-e_0 \rangle ^\bot=\langle e_1-e_0-e_{-1} \rangle.$$
Thus $\Lambda_D$ is isomorphic to the unique indecomposable change-maker lattice of rank one and it has vertices $V_D=\{\pm (e_1-e_0-e_{-1})\}$. See Figure~\ref{fig:trefoil}. In fact, as $\langle e_1-e_0-e_{-1} \rangle$ has only two non-zero irreducible vectors, this is the only reduced alternating diagram with $\Lambda_D$ isomorphic to a change-maker lattice of rank one.
\end{example}

\paragraph{}We use the fact that $L$ is a change-maker lattice to deduce information about the elements of $V_D$. The following lemma serves as a useful sample calculation as well as being helpful in its own right.
\begin{lem}\label{lem:01splitting}
Let $x$ be a vertex or a sum of vertices. If there is $z=-e_g + \sum_{i \in A} e_i \in L$ with $A\subseteq \suppp(x)$ and $g\notin \suppp(x)$, then $(x-z)\cdot z\in \{-1,0 \}$.
\end{lem}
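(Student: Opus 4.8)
The plan is to pin $(x-z)\cdot z$ between $-1$ and $0$ and then invoke integrality of the pairing. Concretely, I will prove the two bounds
\[
-1 \leq (x-z)\cdot z \leq 0
\]
separately: the upper bound is essentially free from the graph-lattice side, while the lower bound is a short coordinate computation on the change-maker side. Since $L$ is an integral lattice, $(x-z)\cdot z \in \mathbb{Z}$, so the two bounds together force $(x-z)\cdot z \in \{-1,0\}$.

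For the upper bound, I will use the hypothesis that $x$ is a vertex or a sum of vertices. Under the identification $\Lambda_D \cong \Lambda(\Gamma_D)$, this means $x = [R]$ for some $R \subseteq V$. Then Lemma~\ref{lem:usefulbound}, applied to this $x$ and to $z \in L$, gives immediately
\[
(x-z)\cdot z \leq 0.
\]
This is the half of the statement that requires the graph structure; no property of $z$ beyond its membership in $L$ is used here.

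For the lower bound, I will compute directly in $\mathbb{Z}^{r+2}$. Writing $x = \sum_j x_j e_j$ with $x_j = x\cdot e_j$, the hypotheses translate into inequalities on coordinates: since $g \notin \suppp(x)$ we have $x_g \leq 0$, and since $A \subseteq \suppp(x)$ we have $x_i \geq 1$ for each $i \in A$ (in particular $g \notin A$). From $z = -e_g + \sum_{i\in A} e_i$ one gets $\norm{z} = 1 + |A|$ and $x\cdot z = -x_g + \sum_{i\in A} x_i$, so that
\[
(x-z)\cdot z = x\cdot z - \norm{z} = -x_g + \sum_{i\in A}(x_i - 1) - 1 \geq -1,
\]
because $-x_g \geq 0$ and each summand $x_i - 1 \geq 0$. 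Combining this with the upper bound and with integrality completes the argument.

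The proof is short and I do not anticipate a genuine obstacle; the only point worth flagging is the division of labour between the two bounds. The inequality $A \subseteq \suppp(x)$ and the condition $g \notin \suppp(x)$ are precisely what make every term in the displayed expression for $(x-z)\cdot z$ nonnegative except the constant $-1$, so the lower bound uses only the coordinate data of the embedding. The upper bound, by contrast, is exactly where the fact that $x=[R]$ comes from an honest sum of graph vertices enters, via Lemma~\ref{lem:usefulbound}. Keeping these two ingredients separate is the cleanest way to present the calculation.
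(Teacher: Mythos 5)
Your proof is correct and follows essentially the same route as the paper: the upper bound $(x-z)\cdot z\leq 0$ comes from Lemma~\ref{lem:usefulbound} applied to the sum of vertices $x$, and the lower bound comes from expanding $(x-z)\cdot z=-x\cdot e_g-1+\sum_{i\in A}(x\cdot e_i-1)$ and using $x\cdot e_g\leq 0$ and $x\cdot e_i\geq 1$ for $i\in A$. The only cosmetic difference is that you state the two bounds separately and invoke integrality explicitly, whereas the paper reads both off from a single displayed identity.
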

\begin{proof}
We apply Lemma~\ref{lem:usefulbound} to $(x-z)\cdot z$. This gives
\begin{equation*}
0\geq (x-z)\cdot z = -x\cdot e_g-1 + \sum_{i\in A} (x\cdot e_i -1).
\end{equation*}
By choice of $z$, $x\cdot e_g\leq 0$ and $x\cdot e_i\geq 1$ for all $i \in A$. So we can deduce that $(x-z)\cdot z\in \{0,-1\}$.
\end{proof}
It is worth noting that further information can be gleaned from the proof of the preceding Lemma. For example, we see that $x\cdot e_i=1$, for all but possibly one $i\in A$. Such arguments will be used frequently later in this paper. In these arguments, the vector $z$ will usually be provided by an application of Lemma~\ref{lem:slacksum}.

\subsection{Marked crossings}\label{sec:markedcrossings}
 In this section, we take the first steps towards understanding the structure of $D$. We will define marked crossings and show their existence. We also show that if there is more than one marked crossing, then any one of them is an unknotting crossing.

 \begin{lem}\label{lem:coefbound}
 Suppose $x = \sum_{v\in R}v \in L$ for $R\subseteq V_D$ is a sum of vertices, then
 \begin{enumerate}[(i)]
 \item $|x\cdot e_0|\leq 1$;
 \item $|x\cdot e_1|\leq 2$;
 \item if $x$ is irreducible with $x\cdot e_0\ne 0$ then $|x\cdot e_1|\leq 1$.
 \end{enumerate}
 \end{lem}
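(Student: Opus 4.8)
The common engine for all three parts is Lemma~\ref{lem:usefulbound}, which for a sum of vertices $x=[R]$ gives $x\cdot z\le\norm z$ for \emph{every} $z\in L$, together with the two constraints built into a change-maker lattice: $x\cdot e_0=x\cdot e_{-1}$ (since $x\cdot\rho=0$) and $x\cdot\sigma=0$. The plan is to feed carefully chosen test vectors $z$ into these inequalities. The test vectors are built from representations $\sigma_g=\sum_{i\in A}\sigma_i$ supplied by Brown's proposition and Lemma~\ref{lem:slacksum}, namely $z=-e_g+\sum_{i\in A}e_i$, with the convention that if the index $0$ lies in $A$ we include the pair $e_0+e_{-1}$ so that $z\in L$. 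Since the lattice is indecomposable we have $\sigma_s\ge1$ for $s\ge1$ by Lemma~\ref{lem:indecomp}, so $\sigma_1=1$ and the basic such vector is $v_1=e_0+e_{-1}-e_1\in L$. Note also that negating a sum of vertices again gives a sum of vertices (the complementary sum, as the vertices sum to zero), so in each part we may fix a convenient sign.

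For (i), assume $a:=x\cdot e_0=x\cdot e_{-1}\ge0$ and suppose for contradiction that $a\ge2$. I would show by induction on $j$ that $x\cdot e_j\ge1$ for every $j\ge1$. For the step, the change-maker condition gives $\sigma_j-1\le\sigma_1+\dots+\sigma_{j-1}$, so Brown's proposition yields $A'\subseteq\{1,\dots,j-1\}$ with $\sum_{i\in A'}\sigma_i=\sigma_j-1$; then $z=-e_j+e_0+e_{-1}+\sum_{i\in A'}e_i\in L$ has $\norm z=3+|A'|$, and Lemma~\ref{lem:usefulbound} reads
\[
-(x\cdot e_j)+2a+\sum_{i\in A'}(x\cdot e_i)\le 3+|A'|.
\]
The inductive hypothesis gives $\sum_{i\in A'}(x\cdot e_i)\ge|A'|$, whence $x\cdot e_j\ge 2a-3\ge1$. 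Once $x\cdot e_j\ge1$ for all $j\ge1$, the identity $x\cdot\sigma=a+\sum_{j\ge1}\sigma_j(x\cdot e_j)$ is at least $2+\sum_{j\ge1}\sigma_j>0$, contradicting $x\in L$. Hence $|x\cdot e_0|\le1$.

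For (iii), by (i) and (ii) we may assume (after possibly negating $x$, which preserves irreducibility by Lemma~\ref{lem:irreducible}) that $x\cdot e_0=1$ and, for contradiction, that $x\cdot e_1=\pm2$. The case $x\cdot e_1=-2$ is immediate: then $x\cdot v_1=2a-(x\cdot e_1)=4>3=\norm{v_1}$, so $x=v_1+(x-v_1)$ is a decomposition with $v_1\cdot(x-v_1)=x\cdot v_1-\norm{v_1}=1\ge0$ and $v_1\neq0,x$, contradicting irreducibility. For $x\cdot e_1=2$ I would reuse the inductive scheme of (i), but now exploiting irreducibility in the reverse direction: the proof of Lemma~\ref{lem:irreducible} shows that for irreducible $x$ one has $(x-z)\cdot z<0$ unless $z\in\{0,x\}$. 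Assuming $x\cdot e_j\le0$ at some stage and feeding in a suitable standard-basis vector $z$ with $1$ in its positive support, the extra weight $x\cdot e_1=2$ supplies exactly the slack needed to reach $x\cdot z\ge\norm z$, giving $(x-z)\cdot z\ge0$ with $z\notin\{0,x\}$, a contradiction. This forces $x\cdot e_j\ge1$ for all $j$ and hence $x\cdot\sigma\ge1+2+\dots>0$.

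Part (ii) is the crux and the step I expect to be the \emph{main obstacle}, since here $x$ need not be irreducible and only the weak inequality $x\cdot z\le\norm z$ is available. Assuming $x\cdot e_1\ge3$ (the lower bound follows by negation), I would take $g$ to be the least index $\ge2$ with $x\cdot e_g\le0$; such a $g$ exists, for otherwise $x\cdot\sigma>0$. By minimality $x\cdot e_i\ge1$ for $1\le i\le g-1$. If $\sigma_g$ is tight, Lemma~\ref{lem:cmirreducible} provides $u_g=-e_g+e_{g-1}+\dots+e_2+2e_1\in L$ with $\norm{u_g}=g+3$, and Lemma~\ref{lem:usefulbound} gives $-(x\cdot e_g)+\sum_{i=2}^{g-1}(x\cdot e_i)+2(x\cdot e_1)\le g+3$, which with the lower bounds forces $2(x\cdot e_1)\le5$, a contradiction. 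If $\sigma_g$ is slack it is representable through $\{1,\dots,g-1\}\subseteq\suppp(x)$, and whenever such a representation contains the index $1$ an application of Lemma~\ref{lem:01splitting} yields $\sum_{i\in A}(x\cdot e_i-1)\le1$, hence $x\cdot e_1\le2$, again a contradiction. The genuinely delicate case is a slack $\sigma_g$ whose only representations through $\suppp(x)$ omit the index $1$ (or are forced to use the index $0$ when $x\cdot e_0\le0$): there the local test vector cannot detect $e_1$, and one must instead combine the bounds on the neighbouring coordinates with the global identity $x\cdot\sigma=0$ to conclude. Managing this interaction between tight and slack indices uniformly is the principal difficulty.
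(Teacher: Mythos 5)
Your part (i) is correct and is essentially the paper's argument run in contrapositive form, and your disposal of the case $x\cdot e_1=-2$ in part (iii) is fine. The genuine gap is exactly where you flag it: in part (ii), and again in the case $x\cdot e_1=2$ of part (iii), you need a test vector $z=-e_g+(\text{positive part})\in L$ whose positive part meets $e_1$ with controlled multiplicity, and your method of producing $z$ --- representing $\sigma_g$ itself via Brown's proposition or Lemma~\ref{lem:slacksum} --- cannot guarantee this: the representation may omit the index $1$, or may be forced to use the index $0$ (hence the block $e_0+e_{-1}$), which destroys the inequality when $x\cdot e_0\leq 0$. Your proposed repair (``combine the bounds on the neighbouring coordinates with the global identity $x\cdot\sigma=0$'') is not carried out, and it is not how the paper closes the argument.

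The missing idea is to represent $\sigma_g-1$ rather than $\sigma_g$. The change-maker condition gives $0\leq\sigma_g-1\leq\sigma_1+\dotsb+\sigma_{g-1}$, so Brown's proposition yields $A\subseteq\{1,\dotsc,g-1\}$ with $\sum_{i\in A}\sigma_i=\sigma_g-1$ and no index $0$ involved; one then spends the remaining unit by hand, using $\sigma_0=\sigma_1=1$. For (ii) the paper sets $z=-e_g+e_1+\sum_{i\in A}e_i\in L$, so that $\epsilon:=z\cdot e_1\in\{1,2\}$ automatically; minimality of $g$ and Lemma~\ref{lem:usefulbound} then give $0\geq(x-z)\cdot z\geq -1+\epsilon(x\cdot e_1-\epsilon)$, hence $x\cdot e_1\leq\epsilon+1/\epsilon\leq 5/2$, with no tight/slack case division at all. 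For (iii) with $x\cdot e_0=1$ one instead places the spare unit on $e_0+e_{-1}$ when $1\in A$ and on $e_1$ when $1\notin A$, so that $z\cdot e_1=1$ in both cases and the $e_0,e_{-1}$ contribution to $(x-z)\cdot z$ vanishes; irreducibility then forces $x=z$ or $(x-z)\cdot z\leq-1$, giving $x\cdot e_1\leq 1$. With this construction of $z$ your inductive scheme becomes unnecessary and both remaining parts close in a few lines.
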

 \begin{proof} All parts are proved in a similar way.

 (i) Since $-x =\sum_{v\in V_D \setminus R}v$ is also a sum of vertices, we may assume that $x\cdot e_0\geq 0$. Let $g>0$ be minimal such that $x\cdot e_g\leq 0$. We may write $\sigma_g -1 = \sum_{i \in A} \sigma_i$ for some $A \subseteq \{1, \dotsc , g-1\}$. Let $z=-e_g + e_0 + e_{-1} + \sum_{i \in A} e_i$. By construction, $z \in L$. By Lemma~\ref{lem:usefulbound}, $(x-z)\cdot z\leq 0$. This gives
 $$0\geq(x-z)\cdot z \geq -1-x\cdot e_g + (x\cdot e_0 - 1) + (x\cdot e_{-1} - 1) \geq -1 + 2x\cdot e_0 -2.$$
 Thus $x\cdot e_0\leq \frac{3}{2}$.

 (ii) We may assume $x\cdot e_1\geq 0$. Let $g>1$ be minimal such that $x\cdot e_g\leq 0$. We may write $\sigma_g -1 = \sum_{i \in A} \sigma_i$ for some $A \subseteq \{1, \dotsc , g-1\}$. Let $z=-e_g + e_1 + \sum_{i \in A} e_i$. We have $z\cdot e_1=\epsilon \in \{1,2\}$. By Lemma~\ref{lem:usefulbound}, $(x-z)\cdot z\leq 0$. This gives
 $$0\geq (x-z)\cdot z\geq -1 + \epsilon(x\cdot e_1-\epsilon).$$
 Thus $x\cdot e_1\leq \epsilon + \frac{1}{\epsilon}\leq \frac{5}{2}$.

 (iii) Suppose now that $x$ is irreducible. We may assume that $x\cdot e_0 = 1$. Let $g>0$ be minimal such that $x\cdot e_g\leq 0$. If $g=1$, then let $z=-e_1 + e_0 + e_{-1}$. By irreducibility, it follows that either $x=z$ or
 $$(x-z)\cdot z= -(x\cdot e_1+1)\leq -1.$$
 This shows that $0\geq x\cdot e_1\geq -1$ in this case.
 Suppose now that $g>1$. We may write $\sigma_g -1 = \sum_{i \in A} \sigma_i$ for some $A \subseteq \{1, \dotsc , g-1\}$. If $1\in A$, set $z=-e_g + e_0 + e_{-1} + \sum_{i \in A} e_i$. If $1\notin A$, set $z=-e_g + e_{1} + \sum_{i \in A} e_i$. In either case $z\cdot e_1=1$. By irreducibility, it follows that either $x=z$ or
 $$ x\cdot e_1 - 1-(x\cdot e_g+1) \leq (x-z)\cdot z \leq -1.$$
 This shows that $x \cdot e_1 = 1$ in this case.
 \end{proof}

From part (i) of the above, it follows there is at most one vertex $v\in V_D$ with $v\cdot e_0>0$, and this necessarily has $v\cdot e_0=1$. It follows from the definition of a change-maker lattice that there is at least one such vertex. Similarly, there is a single vertex $w \in V_D$ with $w\cdot e_0<0$ and it satisfies $w\cdot e_0=-1$. We say that $v$ and $w$ are the {\em marker vertices} of $\Gamma_D$. The edges between $v$ and $w$ are the {\em marked edges}. The crossings in $D$ corresponding to the marked edges will be {\em marked crossings}.

\paragraph{} Returning to Example~\ref{rem:trefoilexample}, we see that every crossing of the trefoil in Figure~\ref{fig:trefoil} is a marked crossing.

\paragraph{}Note that replacing $\iota_D$ by $-\iota_D$ gives another isomorphism with the same marked crossings. For the purposes of notation, it will be convenient to fix this choice of sign in the next lemma.

\begin{rem}\label{rem:preservesmarked}
Suppose $v$ is a vertex that can be written $v=x+y$, with $x\cdot y=-1$. By Lemma~\ref{lem:flype}, this gives a cut edge, $e$ in $\Gamma_D\setminus\{v\}$ and a flype to a diagram with $x$ and $y$ as vertices. Observe that if $x\cdot e_0=0$ or $y\cdot e_0=0$, then the edge $e$ is not marked. This means that the flype can be chosen to fix the marked crossings which will again be marked crossings of the new embedding. In particular, such a flype commutes with the act of changing a marked crossing.
\end{rem}
\paragraph{} With this in mind, we make our first flypes and prove the existence of marked crossings.

\begin{lem}\label{lem:markedcexist}
Let $v$ and $w$ be the marker vertices with $v\cdot e_0=1$ and $w\cdot e_0=-1$. Up to choices of sign for $\iota_D$, we either have $v=-e_1+e_0+e_{-1}$ , or there is a flype, preserving marked crossings, which gives a diagram $D'$ with a marker vertex $v'=-e_1+e_0+e_{-1}$. In particular this means $D'$, and hence $D$, contains at least one marked crossing.
\end{lem}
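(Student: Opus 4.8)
The plan is to show that the positive marker vertex $v$ (the unique vertex with $v\cdot e_0=1$) can be turned into $-e_1+e_0+e_{-1}$ by at most one flype, after possibly exchanging $v$ for the other marker vertex via the sign change $\iota_D\mapsto-\iota_D$. First I would record the basic constraints on $v$: since $v\in L=\langle\rho,\sigma\rangle^{\bot}$, the relation $v\cdot\rho=0$ gives $v\cdot e_{-1}=v\cdot e_0=1$, and $v\cdot\sigma=0$ gives $\sum_{i\geq1}\sigma_i(v\cdot e_i)=-1$; since $L$ is indecomposable, Lemma~\ref{lem:indecomp} forces $\sigma_1=1$, and since $v$ is irreducible with $v\cdot e_0\neq0$, Lemma~\ref{lem:coefbound}(iii) gives $v\cdot e_1\in\{-1,0,1\}$. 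The whole argument is then organised around the auxiliary vector $v_1:=-e_1+e_0+e_{-1}$, which lies in $L$ (it is the first standard basis vector). The difference $v-v_1$ again lies in $L$, has vanishing $e_0$- and $e_{-1}$-coordinates, and a direct computation gives $v_1\cdot(v-v_1)=-1-v\cdot e_1$.

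The two favourable cases fall out immediately. If $v\cdot e_1=-1$ then $v_1\cdot(v-v_1)=0$, so writing $v=v_1+(v-v_1)$ would contradict the irreducibility of $v$ unless $v-v_1=0$; hence $v=v_1$ and there is nothing to do. If $v\cdot e_1=0$ then $v_1\cdot(v-v_1)=-1$, so $v=v_1+(v-v_1)$ meets the hypothesis of Lemma~\ref{lem:flype}; because $(v-v_1)\cdot e_0=0$, Remark~\ref{rem:preservesmarked} guarantees the resulting flype fixes the marked crossings, and it produces a diagram $D'$ whose positive marker vertex is exactly $v_1$.

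The remaining case $v\cdot e_1=1$ is the crux. Here $v_1\cdot(v-v_1)=-2$, so the decomposition $v=v_1+(v-v_1)$ no longer satisfies the hypotheses of Lemma~\ref{lem:flype} (and in the tight examples such as $v=v_k$ one checks there is no decomposition of $v$ with inner product $-1$ at all). My plan is to invoke the sign freedom and work instead with $-w$, where $w$ is the other marker vertex: under $\iota_D\mapsto-\iota_D$ the vector $-w$ is the positive marker vertex, with $(-w)\cdot e_1=-(w\cdot e_1)$, and the two favourable cases above apply to $-w$ precisely when $w\cdot e_1\in\{0,1\}$. Thus everything reduces to the key claim that $v\cdot e_1=1$ and $w\cdot e_1=-1$ cannot hold simultaneously, and I expect this to be the main obstacle. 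To prove it I would argue by contradiction: first note $v+w\neq0$ (otherwise $w=-v$ forces all other vertices to be orthogonal to $v$, making $\langle v\rangle$ an orthogonal summand and contradicting the indecomposability of $L$ from Lemma~\ref{lem:indecomp}); then use Lemma~\ref{lem:irreducible} to write $v_1=[R]$ with $R$ and $V\setminus R$ connected, and read off from the graph-lattice pairing that $v\in R$ with $e(v,V\setminus R)=1$, $w\notin R$ with $e(w,R)=1$, and $d(R)=\norm{v_1}=3$. The contradiction must then be extracted by feeding the change-maker constraint on the coordinates $e_i$ ($i\geq2$) of $v+w=-[V_D\setminus\{v,w\}]$ into this rigid edge-count; this is the delicate part of the whole lemma.

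Finally, once a choice of sign and at most one marked-crossing-preserving flype have produced a diagram whose positive marker vertex is $v_1=-e_1+e_0+e_{-1}$, the existence of a marked crossing is immediate: if $w'$ denotes the corresponding negative marker vertex then $v_1\cdot w'=-(w'\cdot e_1)+w'\cdot e_0+w'\cdot e_{-1}=-(w'\cdot e_1)-2\leq-1$, because $|w'\cdot e_1|\leq1$ by Lemma~\ref{lem:coefbound}(iii). Hence $e(v_1,w')\geq1$, so at least one marked edge, and therefore one marked crossing, is present in $D'$ and consequently in $D$.
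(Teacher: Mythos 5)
Your treatment of the cases $v\cdot e_1\in\{-1,0\}$ and of the final existence computation coincides with the paper's proof and is correct: the identity $v_1\cdot(v-v_1)=-1-v\cdot e_1$ for $v_1=-e_1+e_0+e_{-1}$, the appeal to irreducibility when this is $0$, the appeal to Lemma~\ref{lem:flype} and Remark~\ref{rem:preservesmarked} when it is $-1$, and the closing estimate $v_1\cdot w'=-2-w'\cdot e_1<0$ are all exactly what the paper does. The one place where your write-up falls short of a proof is precisely the step you flag as ``the delicate part'': ruling out the simultaneous occurrence of $v\cdot e_1=1$ and $w\cdot e_1=-1$. You set up the right object --- writing the irreducible vector $v_1$ as a sum of vertices $[R]$ via Lemma~\ref{lem:cmirreducible} and Lemma~\ref{lem:irreducible}, and correctly placing $v\in R$, $w\notin R$ --- but then only gesture at ``feeding the change-maker constraint into the edge-count'' without deriving a contradiction. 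As it stands this is a genuine gap, and the edge-counts $e(v,V\setminus R)=1$, $e(w,R)=1$, $d(R)=3$ do not by themselves produce one.

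The paper closes the gap in two lines, using an ingredient you already invoke in your last paragraph, namely Lemma~\ref{lem:coefbound}. Since $v\in R$ and $w\notin R$, the vector $[R]-v+w$ is again a sum of vertices, and its $e_1$-coordinate is
$$([R]-v+w)\cdot e_1=v_1\cdot e_1-v\cdot e_1+w\cdot e_1=-2+w\cdot e_1,$$
where the hypothesis $v\cdot e_1=1$ is what drives this down to $-2+w\cdot e_1$. Part (ii) of Lemma~\ref{lem:coefbound} bounds the $e_1$-coordinate of any sum of vertices by $2$ in absolute value, so $-2+w\cdot e_1\geq-2$, i.e.\ $w\cdot e_1\geq 0$; in particular $w\cdot e_1=-1$ cannot occur alongside $v\cdot e_1=1$. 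With this inserted, your sign-switch to $-w$ and the rest of your case analysis complete the proof exactly as in the paper.
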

\begin{proof}
First, we will show either $v\cdot e_1\leq0$ or $w\cdot e_1\geq 0$. Suppose $v\cdot e_1 =1$. By Lemma~\ref{lem:cmirreducible}, $v'=-e_1+e_0+e_{-1}$ is irreducible. Thus Lemma~\ref{lem:irreducible} implies there is $R \subset V_D$ such that $v'=\sum_{u\in R}u$. From $v'\cdot e_0=1$, it follows that $v\in R$ and $w\notin R$. Thus $v'-v+w$ is also a sum of vertices. Using part (ii) of Lemma~\ref{lem:coefbound}, we get
$$(v'-v+w)\cdot e_1=-2+w\cdot e_1\geq-2$$
This gives $w\cdot e_1\geq 0$.

\paragraph{} Thus up to replacing $\iota_D$ by $-\iota_D$, we may assume that $v\cdot e_1\leq 0$. As before, let $v'=-e_1+e_0+e_{-1}$. If $v\cdot e_1=-1$, then $v=v'$ as $v$ is irreducible. Assume that $v\cdot e_1=0$. In this case, $(v-v')\cdot v'=-1$. Applying Lemma~\ref{lem:flype} gives a flype to $D'$ with $(v-v')$ and $v'$ as vertices in $V_{D'}$. Since $(v-v')\cdot e_0=0$, this flype preserves marked crossings.
\paragraph{} The statement about existence follows by observing that since $|w\cdot e_1|\leq 1$ by part (iii) of Lemma~\ref{lem:coefbound} we have
$$v'\cdot w=-2-w\cdot e_1<0.$$
\end{proof}

\paragraph{} It follows that we may assume that $D$ has a marker vertex
$$v=-e_1+e_0+e_{-1}.$$
Consequently, there are at least one and at most three marked crossings in $D$. As we will see below, it is easy to show that $u(K)=1$, when there is more than one marked crossing. Further work will be required in the case of a single marked crossing.

\begin{lem}\label{lem:manymarkedcrossings}
Let $K'$ be the knot obtained by changing a marked crossing $c$ in $D$. Then $K'$ is almost alternating with $\det(K')=1$ and
\[\sigma(K')=
\begin{cases}
\sigma(K)+2 &\text{if $c$ is positive,}\\
\sigma(K)   &\text{if $c$ is negative.}
\end{cases}
\]
Moreover, if $D$ has more than one marked crossing, then $K'$ is the unknot and the diagram $D'$ obtained by changing $c$ is $\mathcal{C}_m$ or $\overline{\mathcal{C}_m}$ for some $m$.
\end{lem}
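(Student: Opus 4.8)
The plan is to analyze the local picture at a marked crossing using the lattice-theoretic information accumulated so far. By Lemma~\ref{lem:markedcexist} we may assume the marker vertex is $v=-e_1+e_0+e_{-1}$, and the other marker vertex $w$ satisfies $w\cdot e_0=-1$ and $|w\cdot e_1|\leq 1$ (part (iii) of Lemma~\ref{lem:coefbound}). The marked crossings correspond to the edges between $v$ and $w$; since $v\cdot w = -2 - w\cdot e_1$ records minus the number of such edges, there are between one and three marked crossings depending on whether $w\cdot e_1$ is $1$, $0$, or $-1$.

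First I would establish the claims that hold for \emph{any} single marked crossing $c$. The signature statement follows directly from Proposition~\ref{prop:sigformula}: changing a crossing $c$ toggles whether it is counted among $n_+$ or $n_-$ and changes the signature of the Goeritz form by a controlled amount; a marked crossing, being negative when $\sigma(K)=0$ and positive when $\sigma(K)=-2$ (per the framework of Theorem~\ref{thm:technical}), produces exactly the stated shift. For the determinant, I would compute $\det(K')$ as the absolute value of the determinant of the Goeritz matrix of the altered diagram. Changing the marked crossing corresponds to deleting or altering the marked edge(s) between the marker vertices; I expect that the change-maker structure forces $\det(K')=|\det G_{D'}|=1$. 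The cleanest route is to observe that $\Sigma(K')$ is obtained by the Montesinos trick from an unknotting operation whose associated surgery coefficient forces $|H_1|=1$, or alternatively to compute the Goeritz determinant directly using the fact that $v=-e_1+e_0+e_{-1}$ contributes a norm-$3$ diagonal entry interacting with $w$ in a highly constrained way. That $K'$ is almost alternating is immediate from the definition, since $D'$ differs from the alternating diagram $D$ by a single crossing change.

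The substantive part is the ``moreover'' clause when there is more than one marked crossing. In that case $w\cdot e_1\in\{0,1\}$ gives two or three marked edges between $v$ and $w$. Since $\det(K')=1$ and $K'$ is a knot whose branched double cover $\Sigma(K')$ is a homology sphere, I would argue that $K'$ must be the unknot by exhibiting the explicit diagram. When $v$ and $w$ are joined by $m$ parallel edges (the marked crossings) and changing one of them reduces the local tangle, the remaining diagram $D'$ reduces by Reidemeister moves to the standard clasp diagram $\mathcal{C}_m$ (or its mirror $\overline{\mathcal{C}_m}$, depending on the sign of the crossings, i.e. on $\sigma(K)$). Concretely, with $v$ and $w$ the only two white regions adjacent to all the marked crossings, changing one crossing among a bundle of parallel crossings between two regions yields precisely the picture in Figure~\ref{fig:claspdiagram}.

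The main obstacle I anticipate is pinning down the global diagram $D'$ rather than just the local tangle: I must rule out the possibility that the marker vertices $v,w$ carry additional nontrivial structure elsewhere in $\Gamma_D$ that would survive the crossing change and prevent $K'$ from being the unknot. This is where the irreducibility of $v=-e_1+e_0+e_{-1}$ (Lemma~\ref{lem:cmirreducible}) and the tight control from Lemma~\ref{lem:coefbound} on the coefficients $x\cdot e_0, x\cdot e_1$ of any sum of vertices become essential. I would use these to show that when multiple marked edges exist, the marker vertices $v$ and $w$ together with the marked edges form an isolated ``clasp'' region, so that the graph $\Gamma_{D'}$ (after the crossing change and subsequent simplification) is the trivial graph lattice and hence $K'=U$. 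The determinant computation $\det(K')=1$ then serves as the final confirmation, consistent with Brown's characterisation of the change-maker condition forcing the total $|H_1(\Sigma(K'))|=1$.
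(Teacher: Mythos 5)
Your outline has the right ingredients in view (Goeritz matrices, Gordon--Litherland, ``$\det(K')=1$ should force unknottedness''), but the two steps that carry all the content of the lemma are missing. First, the determinant computation is never actually performed. The route via the Montesinos trick is circular: identifying which surgery a given crossing change realises in the branched double cover is precisely what the paper is trying to establish, so you cannot invoke a ``surgery coefficient forcing $|H_1(\Sigma(K'))|=1$'' at this stage. The alternative you sketch --- a local computation around the norm-$3$ entry of $v$ and its interaction with $w$ --- cannot work either, since the determinant of the full Goeritz matrix is not controlled by the $v,w$ block alone. The paper's argument is global: the embedding $\iota_D$ writes $G_D=AA^T$ with $A$ the $r\times(r+2)$ matrix of vertex coordinates; changing the marked crossing adds $2$ to the single off-diagonal entry $v\cdot w$, which is exactly $(v-e_0-e_{-1})\cdot(w+e_0+e_{-1})=v\cdot w+2$, so $G_{D'}=CC^T$ where $C$ deletes the $e_{-1},e_0$ columns of $A$; and $|\det C|=1$ because the row lattice of $C$ coincides with that of the triangular standard-basis matrix $C'$ with diagonal entries $-1$. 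This factorisation also shows $G_{D'}$ is positive definite of rank $r$, which your signature argument silently needs when applying Proposition~\ref{prop:sigformula}.

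Second, for the ``moreover'' clause your plan is to show that $v$, $w$ and the marked edges form an isolated clasp with trivial residual graph lattice; this is false as stated --- for $\mathcal{C}_m$ the white graph minus the two clasp edges is a path of length $|m|$, not trivial --- and you offer no mechanism for controlling the global structure of $\Gamma_D$. The paper's mechanism is the one you are missing: since $\norm{v}=3$, two or more marked edges must be adjacent crossings in $D$, so after the crossing change a Reidemeister~II move yields an \emph{alternating} diagram $D''$ of $K'$; the determinant of an alternating link equals the number of spanning trees of its white graph, so $\det(K')=1$ forces $\Gamma_{D''}$ to be a tree; and the absence of cut-edges in $\Gamma_D$ then forces $\Gamma_{D''}$ to be a path with the marker vertices as endpoints, identifying $D$ as a clasp diagram and $D'$ as $\mathcal{C}_m$ or $\overline{\mathcal{C}_m}$. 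You have the logical flow reversed when you describe $\det(K')=1$ as a ``final confirmation'': it is the input that drives the entire moreover clause, not a consistency check at the end.
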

\begin{proof}
As in the statement of the lemma, let $D'$ be the diagram for $K'$ obtained by changing $c$ in $D$. The diagram $D'$ is almost-alternating by definition.

\paragraph{}The embedding of $\Lambda_D$ into $\mathbb{Z}^{r+2}$ gives a factorization of the Goeritz matrix, $G_D=AA^T$, where $A$ is the $r\times (r+2)$-matrix given by $A_{ij} =(v_i\cdot e_j)$, for some choice of $r$ vectors $\{v_1,\dotsc , v_r\}\subset V_D$. Let $C$ be the right-most $r\times r$ submatrix of $A$. Recall from Section~\ref{sec:CMlattices} that the change-maker lattice $L$ admits a standard basis. Let $\{w_1, \dotsc , w_r \}$ be a standard basis, where $w_i$ is the basis element with $w_i \cdot e_i = -1$. Let $C'$ be the $r \times r$-matrix $C'=(w_i \cdot e_j)_{1 \leq i,j \leq r}$. Since the lattices spanned by the rows of $C$ and $C'$ are isomorphic, we have $|\det(C)|=|\det(C')|$. As $\{w_1, \dotsc , w_r \}$ is a standard basis, $C'$ is triangular and all diagonal entries take the value -1. Therefore we have $|\det(C)|=|\det(C')|=1$.

\paragraph{}Let $w$ and $v$ be the marker vertices with $w\cdot e_0=-1$ and $v\cdot e_0=1$. Since $(w+e_0+e_{-1})\cdot(v-e_0-e_{-1})=w\cdot v+2$, we see that $G_{D'}=CC^T$ is a Goeritz matrix for $D'$. Therefore, $\det (K')=\det(C)^2 =1$.
\paragraph{} Now we compute the change in the knot signature. Observe that $G_{D'}$ is positive definite of rank $r$. Since the colouring on $D$ is such that every crossing has incidence number -1 and the corresponding Goeritz matrix is positive definite, Proposition~\ref{prop:sigformula} shows that
$$\sigma(K)=r-n,$$
where $n$ is the number of positive crossings in $D$. If $c$ is positive then $D'$ has $n-1$ positive crossings of incidence -1 and 1 negative crossing of incidence +1. If $c$ is negative, then $D'$ has $n$ positive crossings of incidence -1 and no negative crossings of incidence +1. Thus,
\[\sigma(K')=
\begin{cases}
r-n+2=\sigma(K)+2 &\text{if $c$ is positive,}\\
r-n=\sigma(K)   &\text{if $c$ is negative,}
\end{cases}
\]
as required.

\paragraph{} Suppose that $D$ has more than one marked crossing. We may assume that $D$ is a diagram with marker vertex $v=-e_1+e_0+e_{-1}$. Since $\norm{v}=3$, the marked crossings are adjacent in $D$. This allows us to perform a Reidemeister II move on $D'$ to obtain an alternating diagram, $D''$, for $K'$.
The white graph $\Gamma_{D''}$ is obtained by deleting two edges between the marker vertices of $\Gamma_D$. The determinant of an alternating knot is equal to the number of maximal spanning subtrees of the white graph of any alternating diagram \cite{crowell1959genus}. So as $\Gamma_D$ has no self-loops and $\det(K')=1$, it follows that $\Gamma_{D''}$ is a tree. Furthermore, as $\Gamma_D$ has no cut-edges, $\Gamma_{D''}$ must be a path whose endpoints were the marker vertices in $\Gamma_D$. Therefore, $D$ is a diagram of a clasp knot and $D'$ is $\mathcal{C}_m$ or its reflection $\overline{\mathcal{C}_m}$, depending on which marked crossing was changed, for some non-zero $m$.
\end{proof}

\begin{figure}
  \centering
  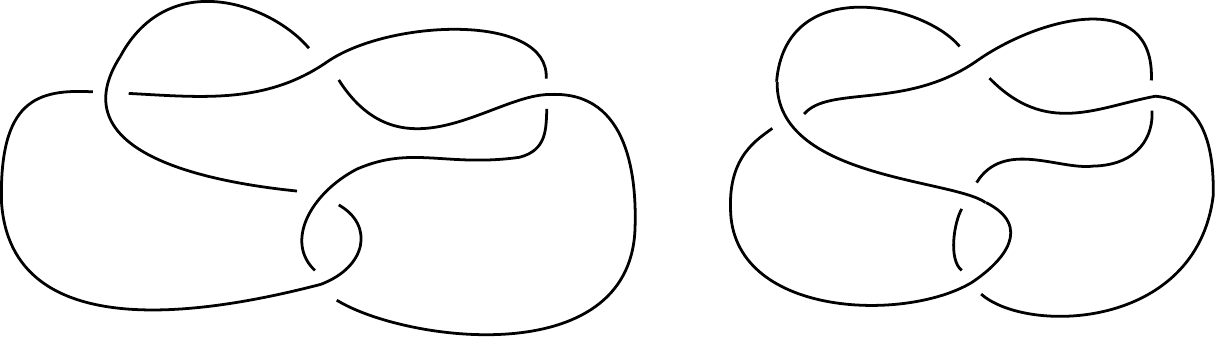
 \caption{Changing a marked crossing in the 5-crossing clasp knot.}
 \label{fig:clasp}
\end{figure}

\subsection{A single marked crossing}\label{sec:singlemarkedc}
We have already shown that $u(K)=1$ when $D$ has a multiple marked crossings. Dealing with the case of a single marked crossing requires more work. Assuming this is the case, we will perform further flypes to find diagrams with extra structure. The aim is to show we can flype so that after changing the marked crossing there is an obvious untongue or untwirl move that can be performed.

\paragraph{}By Lemma \ref{lem:markedcexist}, we may assume that $D$ is such that one of the marker vertices takes the form
$$v_1= -e_1+e_0+e_{-1}.$$
We are using $v_1$ to denote this marker vertex since it coincides with the notation for the standard basis of a change-maker lattice, as used in Section~\ref{sec:CMlattices}. We will write $w$ for the other marker vertex. In this section, we are assuming $w\cdot v_1=-1$, and in particular $w\cdot e_1=-1$. Since $v_1$ is a vertex of degree three it follows that there are other vertices pairing non-trivially with $v_1$. There are two cases to consider.
\begin{itemize}
 \item We say we are in {\em Situation A} if there are two vertices $u_1$ and $u_2$ with
 $$u_1\cdot v_1=u_2\cdot v_1=-1.$$
 From the comments after Lemma~\ref{lem:coefbound}, we necessarily have $u_1\cdot e_0=u_2\cdot e_0=0$ and $u_1\cdot e_1=u_2\cdot e_1=1$.
 \item We say we are in {\em Situation B} if there is a single vertex $u$ with $u\cdot v_1=2$. In this case, such a $u$ has $u\cdot e_0=0$ and $u\cdot e_1=2$.
\end{itemize}
In Situations A and B, we will call $u_1$ and $u_2$, and $u$, the {\em adjacent vertices} respectively.

\paragraph{} Let $k$ be maximal such that $\sigma_k=1$. If $k>1$, then for $2\leq a\leq k$ let $v_a$ be $v_a=-e_a+e_{a-1}\in L$. Again this notation coincides with that of the standard basis.

\begin{lem}
There is a sequence of flypes, preserving the marked crossing, to get a diagram $D'$ with $v_1,\dotsc , v_k$ as vertices.
\end{lem}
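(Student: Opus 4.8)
The plan is to argue by induction on $a$, the base case $a=1$ being exactly the previous lemma, which supplies $v_1=-e_1+e_0+e_{-1}$ as a vertex. So suppose $v_1,\dotsc,v_{a-1}$ are already vertices of a diagram obtained by marked-crossing-preserving flypes, and aim to produce $v_a=-e_a+e_{a-1}$ by a further such flype (iterating if necessary). The engine throughout is Lemma~\ref{lem:flype}, applied to a vertex $v$ admitting a splitting $v=x+y$ with $x=v_a$ and $x\cdot y=-1$; since $\norm{v_a}=2$, this is the same as asking for a vertex $v\neq v_a$ with $v_a\cdot v=1$, and then setting $y=v-v_a$. One requirement comes for free: because $a\geq 2$ we have $v_a\cdot e_0=0$, so Remark~\ref{rem:preservesmarked} guarantees that any flype produced this way fixes the marked crossings. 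Thus the whole content is to locate the vertex $v$ and to check that the resulting flype leaves $v_1,\dotsc,v_{a-1}$ intact.

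The crux is the construction of $v$. Since $\sigma_a=\sigma_{a-1}=1$, the vector $v_a$ lies in $L$ and is irreducible by Lemma~\ref{lem:cmirreducible}, so Lemma~\ref{lem:irreducible} writes $v_a=[R]$ for some $R\subseteq V_D$ inducing a connected subgraph with connected complement. If $R$ is a single vertex then $v_a$ is already a vertex and there is nothing to do, so assume $|R|>1$. I would then locate $e_a$ among the current vertices: the $e_a$-coefficients of all vertices sum to zero, since $\sum_{v\in V_D}v=0$, so some vertex $v$ has $v\cdot e_a<0$, and coefficient estimates in the spirit of Lemmas~\ref{lem:coefbound} and~\ref{lem:01splitting}, which feed on the change-maker condition, should pin this down to $v\cdot e_a=-1$ and force $v_a\cdot v=1$. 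This coefficient bookkeeping---showing the vertex carrying $-e_a$ is unique and sits adjacent to $e_{a-1}$ in precisely the way encoded by $v_a$---is the step I expect to be the main obstacle, since it is where the interaction between the graph-lattice and change-maker structures must be exploited in full.

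With $v$ in hand, Lemma~\ref{lem:flype} produces the desired flype, replacing $v,u_1,u_2$ by $v_a,y,u_1+u_2$, where $u_1,u_2$ are the unique vertices pairing positively with $v_a$ and $y$ respectively. It remains to verify that $v_1,\dotsc,v_{a-1}$ survive, i.e. that none of them equals $v$, $u_1$ or $u_2$. Each $v_i$ with $i\leq a-1$ has support contained in $\{-1,0,\dotsc,a-1\}$, so $v_i\neq v$ because $v\cdot e_a\neq 0$; and $v_i\cdot v_a\leq 0$ for all $i\leq a-1$ (indeed $v_i\cdot v_a=0$ for $i\leq a-2$ and $v_{a-1}\cdot v_a=-1$), so no $v_i$ can be the vertex $u_1$. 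The only genuinely delicate case is $u_2$, the vertex with $u_2\cdot y>0$, which I would exclude using the same support and pairing constraints applied to $y=v-v_a$. Once this is checked, the flype preserves $v_1,\dotsc,v_{a-1}$, adjoins $v_a$, and preserves the marked crossing, completing the inductive step and hence the lemma.
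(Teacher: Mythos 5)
Your skeleton matches the paper's: take the minimal $m$ with $v_m$ not a vertex, use Lemma~\ref{lem:cmirreducible} and Lemma~\ref{lem:irreducible} to write $v_m=[G_1]$ for a connected subgraph $G_1$, find a vertex $x$ with $x\cdot v_m=1$, split $x=v_m+(x-v_m)$ via Lemma~\ref{lem:flype}, and invoke Remark~\ref{rem:preservesmarked}. But the two steps you yourself flag as the hard part are exactly where the proposal fails, and your proposed route for the first one is wrong. You suggest locating a vertex $v$ with $v\cdot e_a<0$ and using change-maker coefficient estimates to ``force'' $v_a\cdot v=1$. Since $v_a\cdot v=-v\cdot e_a+v\cdot e_{a-1}$, even knowing $v\cdot e_a=-1$ you would also need $v\cdot e_{a-1}=0$, and there is no reason for that: the marker vertex $w$, for instance, ends up satisfying $w\cdot e_1=\dotsb=w\cdot e_{k-1}=-1$, so a vertex carrying $-e_a$ can perfectly well pair with $v_a$ trivially or negatively. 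The paper finds $x$ by graph-theoretic rather than arithmetic means: $\norm{v_a}=2$ means exactly two edges leave $G_1$, and 2-connectedness (no cut vertices) forces these edges to emanate from distinct vertices of $G_1$ when $|G_1|>1$, each of which then satisfies $x\cdot v_a=e(x,V\setminus G_1)=1$.

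The second gap is the $u_2$ case, which you correctly call delicate but do not resolve — and it cannot be resolved by ``the same support and pairing constraints'' for an arbitrary choice of $x$. The two candidate vertices of $G_1$ are the endpoints of the two edges leaving $G_1$; exactly one of them is adjacent to $v_{a-1}$ (since $v_a\cdot v_{a-1}=-1$ gives a unique edge from $v_{a-1}$ into $G_1$). If you split about the \emph{other} candidate $x'$, then $v_{a-1}\cdot(x'-v_a)=v_{a-1}\cdot x'-v_{a-1}\cdot v_a=0+1=1$, so $v_{a-1}$ \emph{is} the vertex $u_2$ of Lemma~\ref{lem:flype} and is destroyed (merged into $u_1+u_2$) by the flype, breaking your induction. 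The paper avoids this precisely by choosing $x$ to be the vertex of $G_1$ adjacent to $v_{a-1}$, so that $v_{a-1}\cdot(x-v_a)=-1+1=0$ and $v_{a-1}$ survives; the remaining $v_i$, $i<a-1$, are excluded from $\{x,u_1,u_2\}$ because distinct vertices pair non-positively while $x\cdot v_a=u_1\cdot v_a=1$ and $v_i\cdot(x-v_a)=v_i\cdot x\le 0$. So the missing content is not bookkeeping: it is the 2-connectivity argument producing $x$, together with the specific choice of $x$ among the two candidates.
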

\begin{proof}
Suppose there is $1<l\leq k$, for which $v_l$ is not a vertex. Take $m>1$ to be minimal such that $v_m$ is not a vertex. By Lemma~\ref{lem:cmirreducible}, $v_m$ is irreducible, thus Lemma~\ref{lem:irreducible} shows that we may write $v_m=\sum_{v\in G_1}v$ for some subgraph $G_1\subseteq \Gamma_D$. As $v_{m-1}$ is a vertex and $v_m\cdot v_{m-1}=-1$, there is a vertex $x\in G_1$ with $x\cdot v_{m-1}=-1$. Since $x\in G_1$, this must satisfy $x\cdot v_m>0$ and, as $x$ is neither $v_m$ nor a cut vertex, we have $x\cdot v_m=1$. In particular, $(x-v_m)\cdot v_m=-1$. So by Lemma~\ref{lem:flype}, there is a flype to get a diagram $D'$ with $v_m$ and $x-v_m$ as vertices. Since $v_m\cdot v_i\leq0$ for $i<m$, this flype leaves $v_1, \dotsc , v_{m-1}$ as vertices and by Remark~\ref{rem:preservesmarked}, it can be chosen to preserve the marked crossing. Thus, we may flype so that all of the $v_1,\dotsc , v_k$ are vertices.
\end{proof}

If $D$ is such that $v_1,\dotsc , v_k$ are vertices, then we will say that $D$ is in {\em standard form}. By the previous lemma, we may now assume that $D$ is in standard form.

\paragraph{}If $k>1$, then we must be in Situation A and we can take $u_2=v_2$. Since $v_i$ can pair non-trivially with at most two vertices for $1<i\leq k$, it follows that
$$w\cdot e_1=\dotsb = w\cdot e_{k-1}=-1.$$
The inequality $u_1\cdot v_l\leq 0$ implies
$$u_1\cdot e_1= \dotsb = u_1\cdot e_k = 1.$$

Now we consider whether $\sigma$ is tight or slack, as defined in Section~\ref{sec:CMlattices}.
\begin{lem}\label{lem:slackvtight}
Assume $D$ is in standard form and we are in Situation A. Then the change-maker vector $\sigma$ is slack if, and only if, $\Gamma_D\setminus \{v_1,w\}$ is disconnected and $u_1$ and $u_2=v_2$ lie in separate components.
\end{lem}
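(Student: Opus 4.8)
The plan is to reduce the stated biconditional to the cleaner equivalence
\[\sigma \text{ is tight} \iff u_1 \text{ and } v_2 \text{ lie in the \emph{same} component of } \Gamma_D\setminus\{v_1,w\}.\]
This suffices, because the right-hand side of the lemma (``disconnected, with $u_1$ and $u_2=v_2$ in separate components'') says precisely that $u_1$ and $v_2$ lie in \emph{different} components of $\Gamma_D\setminus\{v_1,w\}$, so slackness matches exactly the negation of the same-component statement. The bridge between the arithmetic of $\sigma$ and the combinatorics of $\Gamma_D$ will be the dictionary between tightness and irreducible vectors supplied by Lemma~\ref{lem:cmirreducible} and Lemma~\ref{lem:irreducible}.

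For ``tight $\Rightarrow$ same component'' I would fix a tight index $s$; since $\sigma_1=\dots=\sigma_k=1$ while tightness forces $\sigma_s=1+\sigma_1+\dots+\sigma_{s-1}\geq 2$, necessarily $s>k$. I would then use the \emph{second} irreducible vector of Lemma~\ref{lem:cmirreducible}, namely $t=-e_s+e_{s-1}+\dots+e_2+2e_1$, whose key feature is $t\cdot e_0=0$. By Lemma~\ref{lem:irreducible} write $t=[R]$ with $R$ and $V\setminus R$ both inducing connected subgraphs. Because $v_1$ and $w$ are the only vertices with nonzero $e_0$-component, $t\cdot e_0=0$ gives $v_1\in R\Leftrightarrow w\in R$; and comparing $t\cdot e_1=2$ with the coefficient bounds of Lemma~\ref{lem:coefbound}, using $v_1\cdot e_1=w\cdot e_1=-1$ and $u_1\cdot e_1=v_2\cdot e_1=1$, will force the one of $R,\,V\setminus R$ that omits both marker vertices to contain both $u_1$ and $v_2$. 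Since that set is connected and lies inside $\Gamma_D\setminus\{v_1,w\}$, the two adjacent vertices share a component.

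For the reverse implication I would prove the contrapositive: assuming $u_1$ and $v_2$ lie in a common component $C$ of $\Gamma_D\setminus\{v_1,w\}$, I would manufacture a tight relation. The idea is to feed the connectedness of $C$ into the telescoping identity $v_1+\dots+v_k=-e_k+e_0+e_{-1}$ together with the sum-decomposition of Lemma~\ref{lem:slacksum}: a path from $u_1$ to $v_2$ inside $C$ should assemble, via $[C]$ and Lemma~\ref{lem:usefulbound}, an irreducible vector of the form $-e_s+2e_1+(\text{lower terms})$ with vanishing $e_0$-component, whose existence is incompatible with the slack decomposition $\sigma_s=\sum_{i\in A}\sigma_i$ with $0\notin A$, and hence pins down a tight $\sigma_s$.

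The main obstacle I anticipate in both directions is the combinatorial bookkeeping on $e_1$-weights: identifying that, in Situation A standard form, the only vertices with $u\cdot e_1\neq 0$ are $v_1,w,u_1,v_2$ (with signs $-,-,+,+$), and thereby excluding the degenerate case in which both marker vertices fall into the same set $R$; and, in the converse, extracting the \emph{exact} tight index $s$ from the geometry of $C$ rather than merely obtaining some relation among the $\sigma_i$. Controlling these points will rely on the standard-form hypotheses $u_1\cdot e_i=1$ and $w\cdot e_i=-1$ for $i\le k$, on $\norm{v_1}=3$ to locate the three edges at $v_1$, and on repeated use of Lemma~\ref{lem:usefulbound} to upgrade the relevant inequalities into the equalities that characterise tightness.
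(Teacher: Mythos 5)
Your reduction of the lemma to ``$\sigma$ tight $\iff$ $u_1$ and $u_2=v_2$ lie in the same component of $\Gamma_D\setminus\{v_1,w\}$'' is valid, and your argument for the forward direction is essentially the paper's: take a tight index $s$, use the irreducible vector $-e_s+e_{s-1}+\dotsb+e_2+2e_1$ from Lemma~\ref{lem:cmirreducible}, write it as $[R]$ for a connected $R$ via Lemma~\ref{lem:irreducible}, and observe that since $v_1,w,u_1,u_2$ are the only vertices with nonzero $e_1$-coefficient (with signs $-,-,+,+$, because any other vertex $z$ has $z\cdot v_1=z\cdot e_0=z\cdot e_{-1}=0$), the equation $[R]\cdot e_1=2$ forces $u_1,u_2\in R$ and $v_1,w\notin R$; the degenerate case you worry about, with both markers in $R$, is killed by the same count. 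That half is fine.

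The gap is in the other direction. You propose to prove ``same component $\Rightarrow$ tight'' by extracting from the common component $C$ an irreducible vector of the form $-e_s+2e_1+(\text{lower terms})$ with vanishing $e_0$-coefficient and declaring its existence incompatible with slackness. Neither step is secured. The natural candidate $[C]$ is indeed irreducible with $[C]\cdot e_1=2$ and $[C]\cdot e_0=0$, but nothing in your sketch controls its remaining coefficients, and the incompatibility hinges on them: taking the slack decomposition $\sigma_s=\sum_{i\in A}\sigma_i$ with $1\in A$, $0\notin A$, and $z'=-e_s+\sum_{i\in A}e_i$, one computes $([C]-z')\cdot z'=-[C]\cdot e_s+\sum_{i\in A\setminus\{1\}}([C]\cdot e_i-1)$, which contradicts irreducibility only if you already know $[C]\cdot e_s=-1$ and $[C]\cdot e_i\geq 1$ for $i\in A\setminus\{1\}$ --- essentially that $[C]$ has the exact shape $-e_s+e_{s-1}+\dotsb+e_2+2e_1$. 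Moreover, mere membership of such a vector in a change-maker lattice does not exclude slackness (for $\sigma=(1,1,2)$ the vector $-e_3+2e_1$ lies in the lattice, which is slack; it happens to be reducible there, which is exactly why irreducibility, and hence the missing coefficient control, carries all the weight). The paper sidesteps this by proving the contrapositive directly: assuming $\sigma$ slack, it takes $g$ minimal with $w\cdot e_g\geq 0$, forms $x=e_g-\sum_{i\in A}e_i$ with $1\in A$ and $0\notin A$, uses $x\cdot v_1=1$ and $x\cdot w\geq\norm{x}-1$ with Lemma~\ref{lem:usefulbound} to get $(v_1+w-x)\cdot x=0$, concludes $v_1+w$ is reducible so $\{v_1,w\}$ is a cut set, and finally uses $e(v_1,\Gamma_D\setminus G_1)=x\cdot v_1=1$ to place exactly one of $u_1,u_2$ on each side. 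You would need either to adopt that construction or to supply the coefficient analysis of $[C]$ that your sketch presupposes.
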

\begin{proof}
Suppose $\sigma$ is slack. This implies $k>1$. Let $g>1$ be minimal such that $w\cdot e_g\geq 0$. By Lemma~\ref{lem:slacksum}, we may find $A\subseteq\{1,\dotsc , g-1\}$ with $1\in A$, such that $\sigma_g = \sum_{i\in A} \sigma_i$.
Consider $x = e_g-\sum_{i\in A}e_i\in L$. By Lemma~\ref{lem:cmirreducible} and Lemma~\ref{lem:irreducible}, this is irreducible and corresponds to a connected subgraph $G_1$ of $\Gamma_D$. Using Lemma~\ref{lem:usefulbound} combined with $x\cdot v_1=1$ and $x\cdot w\geq|A|=\norm{x}-1$, it follows that $x\cdot (v_1+w-x)=0$. In particular, this implies $v_1+w$ is reducible, since $x\ne v_1+w$. From Lemma~\ref{lem:irreducible}, it follows that $\{v_1,w\}$ is a cut set. As $x\cdot v_1>0$ and $x\cdot w>0$, we have $v_1, w \in G_1$. Using $x\cdot v_1=1=e(v_1,\Gamma_D\setminus G_1)$, we see that there is precisely one of the $u_i \in G_1$.

\paragraph{} For the converse, suppose there is $s>1$ with $\sigma_s=\sigma_{s-1}+\dotsb + \sigma_1 +1$.
Thus, by Lemma~\ref{lem:cmirreducible} and Lemma~\ref{lem:irreducible} there is a connected subgraph $G_2$, such that $y=\sum_{v\in G_2}v=-e_s + e_{s-1} + \dotsb +e_2 +2e_1$. Since $y\cdot e_1=2$, it follows that $v_1,w\notin G_2$ and $u_1,u_2 \in G_2$. Since $G_2$ is connected, it follows that $v_1$ and $w$ cannot separate the adjacent vertices.
\end{proof}

Armed with this information, we perform our final sequence of flypes. As ever, these will come from Lemma~\ref{lem:flype} and, by Remark~\ref{rem:preservesmarked}, can be chosen to commute with changing the marked crossing.

\begin{lem}\label{lem:tightslackflyping}
If $\sigma$ is tight and $w$ is not of the form
$$w=e_g-e_{g-1} - \dotsb - e_{-1},$$
then there is a flype to $D'$ in standard form such that we have marker vertex $w'\ne v_1$ satisfying $\max \supp(w')<\max \supp (w)$.

If $\sigma$ is slack and $u_1$ is not of the form
$$u_1= -e_h + e_{h-1} + \dotsb + e_1,$$
then there is a flype to $D'$ in standard form such that we have adjacent vertex $u_1'\ne v_2$ satisfying $\max \supp(u_1')<\max \supp (u_1)$.
\end{lem}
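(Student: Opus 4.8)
The plan is to treat the tight and slack cases in parallel, since they are structurally symmetric: in the tight case we flype to reduce $\max\supp(w)$ of the marker vertex $w$, and in the slack case we flype to reduce $\max\supp(u_1)$ of the adjacent vertex $u_1$. In each case the mechanism is the same: find a suitable irreducible vector $z\in L$ of the form $-e_g+\sum_{i\in A}e_i$ such that the target vertex (call it $t$, meaning $w$ or $u_1$) satisfies $(t-z)\cdot z=-1$, and then invoke Lemma~\ref{lem:flype} to produce a flype replacing $t$ by $t-z$, where $t-z$ has strictly smaller maximal support. Remark~\ref{rem:preservesmarked} will guarantee that this flype can be chosen to preserve the marked crossing, provided the cut edge is unmarked, which will follow because $z\cdot e_0=0$.

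First I would treat the tight case. Here $w$ is the marker vertex with $w\cdot e_0=-1$, and the hypothesis is that $w\ne e_g - e_{g-1}-\dotsb - e_{-1}$, where presumably $g=\max\supp(w)$. Using the tightness of $\sigma$ and Lemma~\ref{lem:cmirreducible} (the tight irreducible vector $-e_g+e_{g-1}+\dotsb+e_2+2e_1$, or more likely a standard basis vector adapted to the support of $w$), I would produce an irreducible $z$ whose support matches a chunk of $\suppp(-w)$. The key computation, exactly in the style of Lemma~\ref{lem:01splitting}, is to apply Lemma~\ref{lem:usefulbound} to $(w-z)\cdot z$ (after perhaps replacing $w$ by $-w$ so that the relevant coefficients are positive) and show that the bound is achieved with value $-1$ rather than $0$; the failure of $w$ to have the special form is precisely what forces the product to be $-1$. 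Once $(w-z)\cdot z=-1$, Lemma~\ref{lem:flype} gives the flype, and one checks that the new marker vertex $w'=w-z$ (or $z-w$, up to the bookkeeping of which piece is the marker) still has $w'\cdot e_0=-1$ and $\max\supp(w')<\max\supp(w)$. Re-establishing standard form, if disturbed, follows from the preceding standard-form lemma.

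For the slack case the argument is parallel but uses Lemma~\ref{lem:slackvtight}: slackness means $\{v_1,w\}$ is a cut set separating $u_1$ from $u_2=v_2$. I would use Lemma~\ref{lem:slacksum} (with $0\notin A$, available in the slack case) to write an appropriate $\sigma_g$ as a sum and build an irreducible $z=-e_h+\sum_{i\in A}e_i$ with $A\subseteq\suppp(u_1)$, then run the same $(u_1-z)\cdot z=-1$ computation via Lemma~\ref{lem:01splitting}. The hypothesis that $u_1\ne -e_h+e_{h-1}+\dotsb+e_1$ is what prevents the product from being $0$, yielding a genuine flype that reduces $\max\supp(u_1)$ and produces $u_1'\ne v_2$.

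The main obstacle I anticipate is not the flype itself but verifying the bookkeeping that the resulting diagram is again in standard form with the correct marker/adjacent vertex structure, and that $\max\supp$ strictly decreases rather than merely staying fixed. The strict decrease hinges delicately on choosing $z$ so that $\max\supp(z)=\max\supp(t)$ and that the leading coordinate cancels; getting the product to equal $-1$ (not $0$) is exactly where the ``not of the special form'' hypothesis is consumed, so the heart of the proof is identifying which coordinate of $t$ fails to match the special form and showing this contributes the extra $-1$ to $(t-z)\cdot z$. I would expect to need a short case analysis on whether $\max\supp(t)$ is tight or whether the defect appears at a lower index, handled by the two constructions of $z$ already appearing in Lemma~\ref{lem:coefbound}(iii).
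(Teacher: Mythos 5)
Your toolkit is the right one (Lemma~\ref{lem:slacksum} to build the auxiliary vector, Lemma~\ref{lem:01splitting} to pin the pairing to $\{-1,0\}$, Lemma~\ref{lem:flype} plus Remark~\ref{rem:preservesmarked} for the flype, with the value $0$ forcing the special form by irreducibility), but the mechanism you describe for the strict decrease of $\max\supp$ is not the one that works, and it hides the places where the real content lies. In the paper, $g$ is not $\max\supp(w)$: it is the \emph{minimal} index at which the sign pattern of the target vertex breaks ($g$ minimal with $w\cdot e_g\geq 0$ in the tight case, $h$ minimal with $u_1\cdot e_h\leq 0$ in the slack case); the auxiliary vector $w'=e_g-\sum_{i\in A}e_i$ (resp.\ $u_1'=-e_h+\sum_{i\in A}e_i$) is supported entirely in $\{-1,\dots,g\}$; and the new marker (resp.\ adjacent) vertex produced by the flype is $w'$ (resp.\ $u_1'$) \emph{itself}, not $w-w'$. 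The strict decrease is then automatic: when $(w-w')\cdot w'=-1$, the term-by-term analysis in Lemma~\ref{lem:01splitting} forces $w\cdot e_g=0$, so $\max\supp(w')=g<\min\suppp(w)\leq\max\supp(w)$. Your version --- choosing $z$ with $\max\supp(z)=\max\supp(t)$ so that the leading coordinate cancels in $t-z$ --- would require a vector from Lemma~\ref{lem:slacksum} whose top coefficient matches the (unknown, possibly of absolute value $\geq 2$) top coefficient of $t$, which the change-maker machinery does not supply.

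Two further steps are missing. First, in the tight case one must show that $A$ can be taken to contain $\{-1,0,\dots,k\}$; otherwise $w'\cdot e_0=0$, the vertex inheriting the marker role after the flype is $w-w'$, and its maximal support has not decreased. This is exactly where tightness is consumed: if $A$ could avoid $\{0,-1\}$ one builds $y=e_g-\sum_{i\in A'}e_i$ with $A'\subseteq\{1,\dots,k-1\}$ and reruns the argument of Lemma~\ref{lem:slackvtight} to conclude that $\{v_1,w\}$ separates the adjacent vertices, contradicting tightness (and giving a contradiction outright in Situation B). Your proposal locates the use of the ``not of special form'' hypothesis correctly (it rules out the value $0$), but it never uses tightness or slackness at all, which cannot be right since the two halves of the lemma flype at different vertices for a reason. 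Second, in the slack case the claim that the new diagram is in standard form needs an argument: if some $v_b\cdot u_1'=1$ for $1<b\leq k$, Lemma~\ref{lem:cutedge} would give a cut-edge incident to $v_b$ in $\Gamma_D\setminus\{u_1\}$, which the cut set $\{v_1,w\}$ from Lemma~\ref{lem:slackvtight} promotes to a cut-edge of $\Gamma_D$, a contradiction since $D$ is reduced. ``Re-establish standard form afterwards'' is not a safe substitute, since the standard-form flypes could a priori destroy the vertex $u_1'$ you have just created.
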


\begin{proof}
Suppose $\sigma$ is tight. Let $g$ be minimal such that $w\cdot e_g\geq 0$. By Lemma~\ref{lem:slacksum}, we may take $A\subseteq\{-1, 0, \dotsc , g-1\}$ such that $w'=e_g - \sum_{i \in A} e_i \in L$ and $1\in A$. We must have $\{-1,0, \dotsc ,k\}\subseteq A$. Otherwise, we could take $y=e_g-\sum_{i \in A'}e_i$, for some $A'\subseteq \{1, \dotsc , k-1\}$ with $1\in A'$. This would give $y\cdot v_1=1$ and $w\cdot y\geq \norm{y}-1$ thus allowing us to mimic the proof of Lemma~\ref{lem:slackvtight} to show that in Situation A, $\{v_1,w\}$ is a cut set separating $u_1$ and $u_2$. This would also give a contradiction in Situation B, since there is only one adjacent vertex.
\paragraph{} By Lemma~\ref{lem:01splitting}, $(w-w')\cdot w'\in\{-1,0\}$, so we consider the two possibilities separately. If $(w-w')\cdot w'=0$, then $w=w'$ by irreducibility of $w$. This implies
$$w=e_g-e_{g-1}-\dotsb - e_{-1}.$$
\paragraph{} If $(w-w')\cdot w'=-1$, then, by Lemma~\ref{lem:flype}, we may flype to get diagram $D'$ in standard form with $w'$ as a vertex of $D'$. In this case, we have
$$\max \supp(w')=g<\min \suppp (w)\leq \max \supp(w),$$
as required.

\paragraph{} Suppose $\sigma$ is slack, so we are necessarily in Situation A. Let $h$ be minimal such that $u_1\cdot e_h\leq 0$. Observe that $h>k$. By Lemma~\ref{lem:slacksum}, there is $A\subseteq \{1, \dotsc , h-1\}$ with $1\in A$ and $u_1'=-e_h + \sum_{i\in A} e_i \in L$.
\paragraph{}By Lemma~\ref{lem:01splitting}, $(u_1-u_1')\cdot u_1'\in\{0,-1\}$, so we consider the two cases separately. If $(u_1-u_1')\cdot u_1'=0$, then $u_1=u_1'$, by irreducibility of $u_1$. This implies
$$u_1=-e_h + e_{h-1} + \dotsb + e_1.$$
\paragraph{} Suppose $(u_1-u_1')\cdot u_1'=-1$. We may flype to a diagram $D'$ with $u_1'$ as a vertex. However, we need to show that $D'$ is in standard form. Since $v_1\cdot u_1'=-1$, it suffices to check that $v_i\cdot u_1'\leq 0$ for $1<i\leq k$. Suppose otherwise, so there is $1<b\leq k$ with $v_b\cdot u_1'=1$. By Lemma~\ref{lem:cutedge}, this implies that there is a cut-edge, $e$, incident to $v_b$ in $G\setminus u_1$. However, Lemma~\ref{lem:slackvtight} shows that $\{v_1,w\}$ is a cut set separating $u_1$ and $u_2$. In particular, $\{v_1,w\}$ also separates $u_1$ and $v_b$, as $u_2=v_2$ and $v_b$ are connected by $v_2, \dotsc , v_{b}$, a path of vertices of degree two. This implies that $e$ is actually a cut-edge in $G$, which is a contradiction as $D$ is reduced. Therefore, $D'$ is in standard form with $u_1'$ an adjacent vertex. In this case, we have
$$\max \supp(u_1')=h<\min \suppm (u_1)\leq \max \supp(u_1),$$
as required.
\end{proof}

\paragraph{}From the preceding lemma, it follows that we may flype so that diagram $D$ is in standard form with
$$w=e_g-e_{g-1} - \dotsb - e_{-1}$$
or
$$u_1= -e_h + e_{h-1} + \dotsb + e_1,$$
depending on whether $\sigma$ is tight or slack. As the following two lemmas will show, this will be the correct diagram for us to consider. Both proofs run along very similar lines, and like much of what has gone before, they make heavy use of Lemma~\ref{lem:usefulbound} applied to carefully chosen combinations of vectors.

\begin{lem}\label{lem:tightedgedetect}
Suppose $\sigma$ is tight and $D$ is in standard form with
$$w=e_g-e_{g-1} - \dotsb - e_{-1}.$$
Let $U=u_1+u_2$ if we are in Situation A, and $U=u$ if we are in Situation B. In either case, $U\cdot w<0$.
\end{lem}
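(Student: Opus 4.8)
The plan is to translate everything into the coordinates $U\cdot e_j$ and then play the orthogonality relation $U\cdot\sigma=0$ against the tightness of $\sigma_g$. First I would collect what the adjacent vertices already supply: in both situations $U\cdot e_{-1}=U\cdot e_0=0$ and $U\cdot e_1=2$. Since $U\in L=\langle\rho,\sigma\rangle^{\bot}$ we have $U\cdot\sigma=0$, and as $\sigma_1=1$ this reads $\sum_{j\ge 2}\sigma_j(U\cdot e_j)=-2$. The hypothesis is exactly that $w=-v_g$ for the tight standard basis vector $v_g=-e_g+e_{g-1}+\dotsb+e_{-1}$, so $\sigma_g=1+\sigma_1+\dotsb+\sigma_{g-1}$; expanding the pairing gives $U\cdot w=U\cdot e_g-\sum_{j=2}^{g-1}U\cdot e_j-2$, and the claim $U\cdot w<0$ becomes the coordinate inequality $U\cdot e_g\le 1+\sum_{j=2}^{g-1}U\cdot e_j$.

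Second, I would note that the non-strict bound $U\cdot w\le 0$ is automatic: $w$ is a vertex whose $e_0$- and $e_1$-coordinates have the wrong sign for it to be one of the summands of $U$, so $U\cdot w$ is a sum of terms $u_i\cdot w=-e(u_i,w)\le 0$. The real content is therefore to exclude equality, and here I would argue by contradiction. Assuming $U\cdot w=0$ turns the identity above into $U\cdot e_g=2+\sum_{j=2}^{g-1}U\cdot e_j$; substituting this into $\sum_{j\ge 2}\sigma_j(U\cdot e_j)=-2$ collapses the two relations into the single equation
\[
\sum_{j=2}^{g-1}(\sigma_j+\sigma_g)(U\cdot e_j)+\sum_{j>g}\sigma_j(U\cdot e_j)=-2(1+\sigma_g),
\]
whose right-hand side is very negative because $\sigma_g\ge 1+k$.

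Finally --- and this is where I expect the real work to lie --- I would show this equation is impossible by bounding the coordinates $U\cdot e_j$ for $j>k$. The coordinates in the pinned range $2\le j\le k$ are already known (for instance $U\cdot e_2=0$ and $U\cdot e_j=1$ for $3\le j\le k$ in Situation A), so all of the required negativity would have to come from indices $j>k$, where $\sigma_j\ge 2$; to control these I would feed the test vectors supplied by the change-maker condition via Lemma~\ref{lem:slacksum} (of the form $-e_m+e_0+e_{-1}+\sum_{i\in A}e_i\in L$, as in Lemma~\ref{lem:coefbound}) into Lemma~\ref{lem:usefulbound}, obtaining lower bounds on each $U\cdot e_m$, while irreducibility (Lemma~\ref{lem:cmirreducible}) caps the positive coordinates. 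The delicate case is when $U$ has support beyond $g$, where the weights $\sigma_j$ are largest; there I would use the tightness of $\sigma_g$ together with the connectedness of the subgraph carrying the irreducible vector $-(v_1+w)=-e_g+e_{g-1}+\dotsb+2e_1$ (the subgraph appearing in the proof of Lemma~\ref{lem:slackvtight}) to rule out the cancellations an equality would demand. Assembling these bounds should make the left-hand side of the displayed equation strictly larger than $-2(1+\sigma_g)$, the contradiction forcing $U\cdot w<0$.
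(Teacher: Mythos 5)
Your first two paragraphs are sound: the coordinate bookkeeping ($U\cdot e_{-1}=U\cdot e_0=0$, $U\cdot e_1=2$), the observation that $U\cdot w\leq 0$ is automatic because $U$ is a sum of vertices distinct from $w$, and the reduction of the strict inequality to refuting the single equation
\[
\sum_{j=2}^{g-1}(\sigma_j+\sigma_g)(U\cdot e_j)+\sum_{j>g}\sigma_j(U\cdot e_j)=-2(1+\sigma_g)
\]
are all correct. But the proof stops there. The third paragraph is a plan, not an argument --- you say yourself that this is ``where the real work lies'' and that assembling the bounds ``should'' give a contradiction --- and I do not believe the plan as stated closes the gap. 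The displayed equation is nothing more than the conjunction of $U\cdot\sigma=0$ (which holds automatically) with the hypothesis $U\cdot w=0$, so refuting it requires genuinely new input beyond coordinate bounds. In particular, ``lower bounds on each $U\cdot e_m$'' cannot suffice: for $j>g$ the weights $\sigma_j$ are unconstrained from above relative to $\sigma_g$, so a single coordinate $U\cdot e_j=-1$ at an index with $\sigma_j\approx 2\sigma_g$ already accounts for the entire right-hand side, and no pointwise cap on the positive coordinates rules this out.

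The missing input is the graph-lattice structure, and the paper deploys it quite differently. Rather than arguing by contradiction from $U\cdot w=0$, it takes $m$ minimal with $U\cdot e_m<0$ and splits into the cases $m\leq g$ and $m>g$. In each case it constructs an explicit auxiliary vector ($z=e_m-\sum_{i\in A}e_i$ or $x=-e_l+\sum_{i\in B}e_i+\epsilon e_g+\sum_{j\in A}e_j$ via Lemma~\ref{lem:slacksum}) and applies Lemma~\ref{lem:usefulbound} not to $U$ or $w$ alone but to the sums of vertices $w+U$, decomposed as $(z+U)+(w-z-U)$ or $(x+w)+(U-x)$; it is the fact that $w+U$ is a sum of vertices that makes Lemma~\ref{lem:usefulbound} applicable and yields the inequalities that force $U\cdot w<0$. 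Your sketch gestures at Lemma~\ref{lem:usefulbound}, irreducibility, and connectedness, but none of the case analysis or the specific decompositions is carried out, and without them the contradiction is not established. The reformulation in your first two paragraphs is a reasonable alternative starting point, but as written the proof is incomplete at its decisive step.
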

\begin{proof}
Since $U\ne 0$, $\suppm(U)$ is nonempty, so we may take $m$ be minimal such that $U \cdot e_m<0$. Note the strict inequality here. We consider the cases $m\leq g$ and $m>g$ separately.

\paragraph{} Suppose $m\leq g$. By Lemma~\ref{lem:slacksum}, there is $A\subseteq\{-1,0,\dotsc, m-1\}$ with $1\in A$ and
$$z=e_m-\sum_{i\in A} e_i \in L.$$
Direct computation gives
$$(w-z)\cdot z= w\cdot e_m -1 \geq -2,$$
and
$$U\cdot z\leq -U\cdot e_1 + U\cdot e_m = -2 +U\cdot e_m \leq -3.$$
Combining these gives $(w-z-U)\cdot z>0$. However, Lemma~\ref{lem:usefulbound} yields
$$(w-z-U)\cdot z+w\cdot U=(w+U-(U+z))\cdot(z+U)\leq 0.$$
Therefore, $w\cdot U\leq -(w-z-U)\cdot z<0$, which is the required inequality.

\paragraph{} Now suppose $m>g$. In this case,
\begin{equation*}
w\cdot U=U\cdot e_g-\sum_{i=1}^{g-1}U\cdot e_i,
 \end{equation*}
and $U\cdot e_i\geq 0$ for $1\leq i \leq g$, so we wish to find a bound for $U\cdot e_g$. Let $l>g$ be minimal such that $U\cdot e_l\leq 0$. Such an $l$ exists and is at most $m$. By Lemma~\ref{lem:slacksum}, we may pick $A\subseteq\{g+1,\dotsc , l-1\}$, $B\subseteq \{-1, \dotsc , g-1\}$ with $1\in B$, and $\epsilon \in \{0,1\}$, such that
$$x=-e_l + \sum_{i \in B} e_i + \epsilon e_g + \sum_{j\in A} e_j \in L.$$
We write $C=\{-1, \dotsc , g-1\}\setminus B$. Note that $1\notin C$ and
$$x+w=-e_l + \sum_{j\in A}e_j + (1+\epsilon)e_g - \sum_{i\in C} e_i.$$
Since $w+U$ is a sum of vertices, we can apply Lemma~\ref{lem:usefulbound}, to get
\begin{align*}
0&\geq(w+U - (x+w))\cdot(x+w)=(U-x)\cdot(x+w)
\\ &\geq -1 + \sum_{j\in A}(U\cdot e_j-1) + (1+\epsilon)(U\cdot e_g-\epsilon) - \sum_{i\in C} U\cdot e_i
\\ &\geq -1 + (1+\epsilon)(U\cdot e_g-\epsilon) - \sum_{i\in C} U\cdot e_i.
\end{align*}
Here we are also using that $U\cdot e_j\geq1$ for $j\in A$.

As $1\notin C$, we also get the inequality,
\begin{equation*}
\sum_{i\in C} U\cdot e_i \leq \sum_{i=2}^{g-1}U\cdot e_i =\sum_{i=1}^{g-1}U\cdot e_i -2.
\end{equation*}
It follows from the above inequalities that
\begin{equation*}
U\cdot e_g \leq \frac{1}{1+\epsilon}(\sum_{i=1}^{g-1}U\cdot e_i -1) + \epsilon.
\end{equation*}
This allows us to compute
\begin{align*}
w\cdot U & =U\cdot e_g-\sum_{i=1}^{g-1}U\cdot e_i
\\&\leq \epsilon - \frac{1}{1+\epsilon}(\epsilon \sum_{i=1}^{g-1}U\cdot e_i + 1)
\\&\leq \epsilon - \frac{\epsilon U\cdot e_1 + 1}{1+\epsilon}
\\&= \epsilon - \frac{2\epsilon + 1}{1+\epsilon}.
\end{align*}
Since $\epsilon \in \{0,1\}$, this implies $w\cdot U<0$.
\end{proof}

\begin{lem}\label{lem:slackedgedetect}
Suppose $\sigma$ is slack and $D$ is in standard form with
$$u_1=-e_h+e_{h-1} + \dotsb + e_1.$$
If $w\cdot e_2=0$, then $w\cdot u_2=-1$. If $w\cdot e_2 =-1$, then $u_1\cdot w<0$.
\end{lem}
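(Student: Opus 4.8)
The plan is to treat the two assertions separately; both rest on first identifying the adjacent vertex $u_2$. Since $\sigma$ is slack we cannot have $k=1$: if $k=1$ then $\sigma_2=\sigma_1+1$ would make $\sigma_2$ tight, contradicting slackness, so $k>1$. As recorded after the definition of standard form, when $k>1$ we are necessarily in Situation~A with $u_2=v_2=-e_2+e_1$. Consequently $w\cdot u_2=-w\cdot e_2+w\cdot e_1$, and since $w\cdot e_1=-1$ the hypothesis $w\cdot e_2=0$ gives $w\cdot u_2=-1$ immediately, proving the first assertion.

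For the second assertion, note first that $u_1$ and $w$ are distinct vertices, so $u_1\cdot w\le 0$ automatically; the real content is to exclude $u_1\cdot w=0$, that is, to show $u_1$ and $w$ are adjacent. The key step is to repackage the pairing. With $u_2=v_2=-e_2+e_1$ and the present hypothesis $w\cdot e_2=-1$ we get $w\cdot u_2=-w\cdot e_2+w\cdot e_1=1-1=0$, so that, setting $U:=u_1+u_2$, we have $u_1\cdot w=U\cdot w$. Here $U$ is a sum of vertices (hence lies in $L$) of the clean form
\[
U=u_1+u_2=-e_h+e_{h-1}+\dotsb+e_3+2e_1,
\]
the $e_2$-coefficients cancelling; in particular $U\cdot e_1=2$. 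Thus it suffices to prove $U\cdot w<0$ for a sum of vertices $U$ of the same shape that was bounded in Lemma~\ref{lem:tightedgedetect}, with the feature $U\cdot e_1=2$ playing exactly the role that forces strictness there.

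To establish $U\cdot w<0$ I would run the argument of Lemma~\ref{lem:tightedgedetect} in the dual configuration (there the vector with the isolated extreme coefficient was the \emph{known} one and $U$ was bounded against it; here $U$ is known and $w$ is the sum of vertices whose higher coordinates are unknown). Using $w\cdot e_{-1}=w\cdot e_0=w\cdot e_1=w\cdot e_2=-1$, set $m=\min\{i:w\cdot e_i\ge 0\}$, so $m\ge 3$, and split according to whether $m\le h$ or $m>h$. In each case Lemma~\ref{lem:slacksum} supplies a decomposition of $\sigma_m$ and hence a vector $z\in L$ supported near $e_m$ and the indices below it; applying Lemma~\ref{lem:usefulbound} to the sum of vertices $U+w$ against a suitable translate of $z$ bounds the single problematic coordinate, and substituting into the explicit expansion $U\cdot w=2\,w\cdot e_1+\sum_{i=3}^{h-1}w\cdot e_i-w\cdot e_h$ then yields $U\cdot w<0$. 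I expect the main obstacle to be the case $m>h$, where $w\cdot e_h<0$ is a priori unbounded below: there one must, exactly as in Lemma~\ref{lem:tightedgedetect}, choose the slack decomposition of $\sigma_m$ so as to split its support around the index $h$, isolating a coefficient $\epsilon\in\{0,1\}$ on $e_h$, and it is the contribution $U\cdot e_1=2$ together with $1$ lying in the low part of the decomposition that makes the resulting estimate strict rather than merely $\le 0$. Some care is also needed in the small cases of the decomposition (for instance $m=k$ with $\sigma_k=1$), where the hypothesis $w\cdot e_2=-1$ must be used to select the decomposition; but this book-keeping is entirely parallel to that already carried out in Lemma~\ref{lem:tightedgedetect}.
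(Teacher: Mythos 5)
Your handling of the first assertion is correct and matches the paper: slackness forces $k\ge 2$, so $u_2=v_2=-e_2+e_1$ and $w\cdot u_2=-w\cdot e_2+w\cdot e_1=-1$ when $w\cdot e_2=0$. For the second assertion your reduction is also sound, and in one respect cleaner than the paper's: since $w\cdot u_2=0$ when $w\cdot e_2=-1$, it suffices to bound $U\cdot w$ for $U=u_1+u_2=-e_h+e_{h-1}+\dotsb+e_3+2e_1$, and with this uniform choice both subcases of the low range ($2\in A$ and $2\notin A$) give $(U-z-w)\cdot z\ge 1$, whereas the paper switches between $U=u_1$ and $U=u_1+u_2$ according to whether $2\in A$. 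The overall strategy --- split on whether the first non-negative coordinate of $w$ occurs at or below $h$, manufacture a vector from Lemma~\ref{lem:slacksum}, and apply Lemma~\ref{lem:usefulbound} to a sum of vertices --- is exactly the paper's, which itself mirrors Lemma~\ref{lem:tightedgedetect}.

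Two things stop this from being a proof. First, the one concrete choice you do commit to is wrong: you take $m=\min\{i:w\cdot e_i\ge 0\}$, but the argument requires $m$ minimal with $w\cdot e_m>0$ (which exists since $\suppp(w)\ne\emptyset$). If $w\cdot e_m=0$, the estimate $w\cdot z\le -w\cdot e_m+\sum_{i\in A}w\cdot e_i$ loses $1$, the key inequality $(U-z-w)\cdot z>0$ degrades to $\ge 0$, and Lemma~\ref{lem:usefulbound} then only returns $U\cdot w\le 0$ --- which holds for any pair of distinct vertices and proves nothing. Second, the case $m>h$, which you correctly identify as the crux, is not actually carried out: the required upper bound on $-w\cdot e_h$ in terms of $\sum_{i=1}^{h-1}w\cdot e_i$ depends on choosing the decomposition of $\sigma_l$ (for $l>h$ minimal with $w\cdot e_l\ge 0$) so that $2$ lies in the low part $B$ of the support, and on the input $w\cdot(e_1+e_2)=-2$ to make the final quantity $\epsilon-\frac{2\epsilon+1}{1+\epsilon}$ strictly negative for $\epsilon\in\{0,1\}$; none of this appears in your write-up. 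With $m$ redefined strictly and the $m>h$ estimate executed as in Lemma~\ref{lem:tightedgedetect}, your outline does close, following essentially the paper's own route.
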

\begin{proof} As $\sigma$ is slack, we have $u_2=-e_2+e_1$. If $w\cdot e_2=0$, then $w\cdot u_2=-1$. So from now on we assume that $w\cdot e_2=-1$. The proof that $u_1\cdot w<0$ follows that of the preceding lemma closely.
\paragraph{}Take $m$ minimal such that $w\cdot e_m>0$. We consider the cases $m\leq h$ and $m>h$ separately.

\paragraph{} We deal with $m\leq h$ first. By Lemma~\ref{lem:slacksum}, we may choose $A\subseteq\{1, \dotsc, m-1\}$ such that $1\in A$ and
$$z=-e_m+\sum_{i\in A} e_i \in L.$$
If $2\in A$, then
$$(u_1-z)\cdot z=-u_1\cdot e_m -1\geq -2,$$
and
$$w\cdot z\leq w\cdot e_1 + w\cdot e_2 - w\cdot e_m \leq -3.$$
If $2\notin A$, then
$$(u_1+u_2-z)\cdot z=(u_1-z)\cdot z+u_2\cdot z= -u_1\cdot e_m \geq -1,$$
and
$$w\cdot z\leq -w\cdot e_1 +w\cdot e_m \leq -2.$$
In particular, the above calculations shows there is a choice, $U=u_1$ or $U=u_1+u_2$ such that
$$(U-z-w)\cdot z>0.$$
Applying Lemma~\ref{lem:usefulbound} yields
$$(U-z-w)\cdot z+w\cdot U=(U+w-(w+z))\cdot(w+z)\leq 0.$$
This gives the bound $w\cdot U<0$. The required inequality, $w\cdot u_1<0$, follows after observing that $u_2\cdot w=0$.

\paragraph{} Now suppose $m>h$. In this case,
\begin{equation*}
u_1\cdot w= - w\cdot e_h + \sum_{i=1}^{h-1} w\cdot e_i,
\end{equation*}
and $w\cdot e_i<0$ for $1\leq i\leq h$, so we need to bound $w\cdot e_h$. Let $l>h$ be minimal such that $w\cdot e_l\geq 0$. Such an $l$ exists and is at most $m$. By Lemma~\ref{lem:slacksum}, we may pick $A\subseteq\{h+1, \dotsc , l-1\}$, $B\subseteq \{1, \dotsc , h-1\}$ and $\epsilon \in \{0,1\}$ such that
\begin{equation*}
x= e_l - \sum_{j\in A} e_j - \epsilon e_h - \sum_{i\in B}e_i \in L.
\end{equation*}
Since $\sigma_2=1$, we may assume that $2\in B$.
If we write $C=\{1, \dotsc , h-1\}\setminus B$, then $2\notin C$ and
\begin{equation*}
x+u_1 = e_l - \sum_{j\in A} e_j - (\epsilon +1) e_h + \sum_{i\in C}e_i \in L.
\end{equation*}
Since $u_2+w+u_1$ is a sum of vertices, we can apply Lemma~\ref{lem:usefulbound} to get
\begin{align*}
0&\geq (u_2+ w + u_1 -(x+u_1))\cdot(x+u_1)
\\ &=(u_2 + w - x)\cdot(x+u_1)
\\ &\geq -1 - \sum_{j\in A}(w\cdot e_i-1) - (1+\epsilon)(w\cdot e_h-\epsilon) + \sum_{i\in C} (w\cdot e_i + u_2\cdot e_i)
\\ &\geq -1 - (1+\epsilon)(w\cdot e_h+\epsilon) + \sum_{i\in C} (w\cdot e_i + u_2\cdot e_i).
\end{align*}

Since $(w+u_2)\cdot e_1=0$, $(w+u_2)\cdot e_2=-2$ and $2\notin C$, we have
\begin{equation*}
\sum_{i\in C} (w\cdot e_i + u_2\cdot e_i) \geq \sum_{i=3}^{h-1} (w\cdot e_i + u_2\cdot e_i)=\sum_{i=1}^{h-1}w\cdot e_i+2.
\end{equation*}
Combining these inequalities gives
\begin{equation*}
-w\cdot e_h\leq \frac{-1}{1+\epsilon}(\sum_{i=1}^{h-1}w\cdot e_i +1)+\epsilon.
\end{equation*}
This allows us to compute
\begin{align*}
w\cdot u_1&=- w\cdot e_h + \sum_{i=1}^{h-1} w\cdot e_i
\\ &\leq \epsilon + \frac{1}{1+\epsilon}(\epsilon \sum_{i=1}^{h-1}w\cdot e_i -1)
\\ &\leq \epsilon + \frac{\epsilon w\cdot (e_1+e_2) -1}{1+\epsilon}
\\ &= \epsilon - \frac{2\epsilon + 1}{1+\epsilon}.
\end{align*}
Since $\epsilon \in \{0,1\}$, this gives $w\cdot u_1<0$.
\end{proof}

It will be useful to subdivide Situation A into two cases. If $w\cdot e_2=0$, then we say we are in {\em Situation A2}. Otherwise, we say that we are in {\em Situation A1}. Note that in Situation A2, $\sigma$ is necessarily slack and we must have $k=2$.
The results of this section can be summarised as the following.

\begin{prop}\label{prop:singlecrossingsummary}
If $K$ has a minimal diagram $D$ with a single marked crossing $c$, then there is a sequence of flypes to a minimal diagram, $D'$, in standard form which appears as shown in Figure~\ref{fig:localsubgraphs} in the neighbourhood of $c$. Each of the flypes fixes $c$ and hence commutes with changing $c$.
\end{prop}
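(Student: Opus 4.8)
The plan is to assemble this proposition from the lemmas already proved in this section, treating it as a packaging statement rather than a source of new ideas. First I would invoke Lemma~\ref{lem:markedcexist} to arrange, after a choice of sign for $\iota_D$ and a marked-crossing-preserving flype, that $D$ has the marker vertex $v_1=-e_1+e_0+e_{-1}$, and then apply the lemma putting $D$ into standard form so that $v_1,\dotsc,v_k$ are all vertices, the flype again fixing $c$. At this stage I would split into Situation A and Situation B according to whether $v_1$ pairs nontrivially with two adjacent vertices $u_1,u_2$ or a single adjacent vertex $u$, recording the coordinate data $u_i\cdot e_0=0$, $u_i\cdot e_1=1$ (resp.\ $u\cdot e_0=0$, $u\cdot e_1=2$) from the remarks following Lemma~\ref{lem:coefbound}, together with the standard-form chain relations $w\cdot e_1=\dotsb=w\cdot e_{k-1}=-1$ and $u_1\cdot e_1=\dotsb=u_1\cdot e_k=1$.

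Next I would run the normal-form reduction of Lemma~\ref{lem:tightslackflyping}, the dichotomy between tight and slack being decided by Lemma~\ref{lem:slackvtight}. In the tight case I would iterate the first clause: as long as $w$ is not of the form $e_g-e_{g-1}-\dotsb-e_{-1}$, there is a standard-form flype, preserving $c$ by Remark~\ref{rem:preservesmarked}, replacing the marker vertex $w$ by $w'$ with $\max\supp(w')<\max\supp(w)$. Since $\max\supp(w)$ is a non-negative integer that strictly decreases, the process terminates with $w=e_g-e_{g-1}-\dotsb-e_{-1}$. In the slack case the symmetric argument using the second clause terminates with the adjacent vertex in the form $u_1=-e_h+e_{h-1}+\dotsb+e_1$. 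The one thing to verify at each step is that the flype keeps $D$ in standard form, which is precisely the content of the check $v_i\cdot u_1'\leq 0$ carried out inside the proof of Lemma~\ref{lem:tightslackflyping}.

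With $w$ (resp.\ $u_1$) in normal form, I would read off the neighbourhood of $c$ from the edge-detection lemmas. In the tight case Lemma~\ref{lem:tightedgedetect} gives $U\cdot w<0$, where $U=u_1+u_2$ in Situation A and $U=u$ in Situation B, forcing an edge from $w$ to an adjacent vertex; together with the marked edge joining $v_1$ and $w$, the edges joining $v_1$ to the adjacent vertices, and the degree-two chain $v_2,\dotsc,v_k$, this pins down the local subgraph. In the slack case, which occurs only in Situation A, I would subdivide via Lemma~\ref{lem:slackedgedetect}: if $w\cdot e_2=0$ we are in Situation A2, where $k=2$ and $w\cdot u_2=-1$, while if $w\cdot e_2=-1$ we are in Situation A1, where $u_1\cdot w<0$. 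Collecting these gives exactly the three configurations of Figure~\ref{fig:localsubgraphs}, and since every flype used was supplied by Lemma~\ref{lem:tightslackflyping} through Lemma~\ref{lem:flype}, Remark~\ref{rem:preservesmarked} guarantees each fixes $c$ and commutes with changing it.

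I expect the real work to lie not in any single invocation but in the bookkeeping: confirming that the recorded coordinate constraints leave no unaccounted edges in the neighbourhood of $c$, that the middle reduction simultaneously preserves standard form and strictly decreases the support so that it genuinely terminates, and that the tight/slack dichotomy together with the A1/A2/B subdivision is exhaustive (in particular that the slack case forces Situation A, equivalently that an adjacent vertex $u$ with $u\cdot e_1=2$ forces $\sigma$ tight). Once these routine checks are in place the proposition follows by straightforward case analysis.
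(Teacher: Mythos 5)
Your proposal is correct and follows essentially the same route as the paper: the paper's proof likewise cites Lemma~\ref{lem:tightslackflyping} (iterated implicitly, with the same termination-by-decreasing-support reasoning) to reach the normal forms for $w$ or $u_1$, and then reads off the triangles in $\Gamma_{D'}$ from Lemmas~\ref{lem:tightedgedetect} and~\ref{lem:slackedgedetect} in Situations A1, A2 and B. The preliminary reductions you spell out (Lemma~\ref{lem:markedcexist}, standard form, the coordinate constraints on the adjacent vertices, and the fact that slack forces $k\geq 2$ and hence Situation A) are exactly the ones established in the surrounding text of Section~\ref{sec:singlemarkedc}, so your packaging matches the paper's.
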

\begin{proof}
By Lemma~\ref{lem:tightslackflyping}, there are a sequence of flypes fixing $c$ to get $D'$ in standard form with an adjacent vertex
$$u_1=-e_h+e_{h-1} + \dotsb + e_1$$
or a marker vertex
$$w=e_g-e_{g-1} - \dotsb - e_{-1},$$
depending on whether $\sigma$ is tight or slack.
In Situation A1, we can use Lemma~\ref{lem:slackedgedetect} or Lemma~\ref{lem:tightedgedetect} to deduce that $(u_2 + u_1)\cdot w<0$. We may assume $u_1\cdot w<0$. This means $v_1, w$ and $u_1$ form a triangle in $\Gamma_{D'}$.

Situation A2 is the case where $\norm{u_2}=2$ and $u_2\cdot w=u_2\cdot v_1=-1$. This means $v_1, w$ and $u_2$ form a triangle in $\Gamma_{D'}$ and there are no further edges from $u_2$.

In Situation B, Lemma~\ref{lem:tightedgedetect} gives $u\cdot w<0$. Thus $v_1, w$ and $u$ form a triangle in $\Gamma_{D'}$ with an additional edge between $u$ and $v_1$.
\end{proof}

\begin{figure}[h]
  \centering
  \def\svgwidth{\columnwidth}
  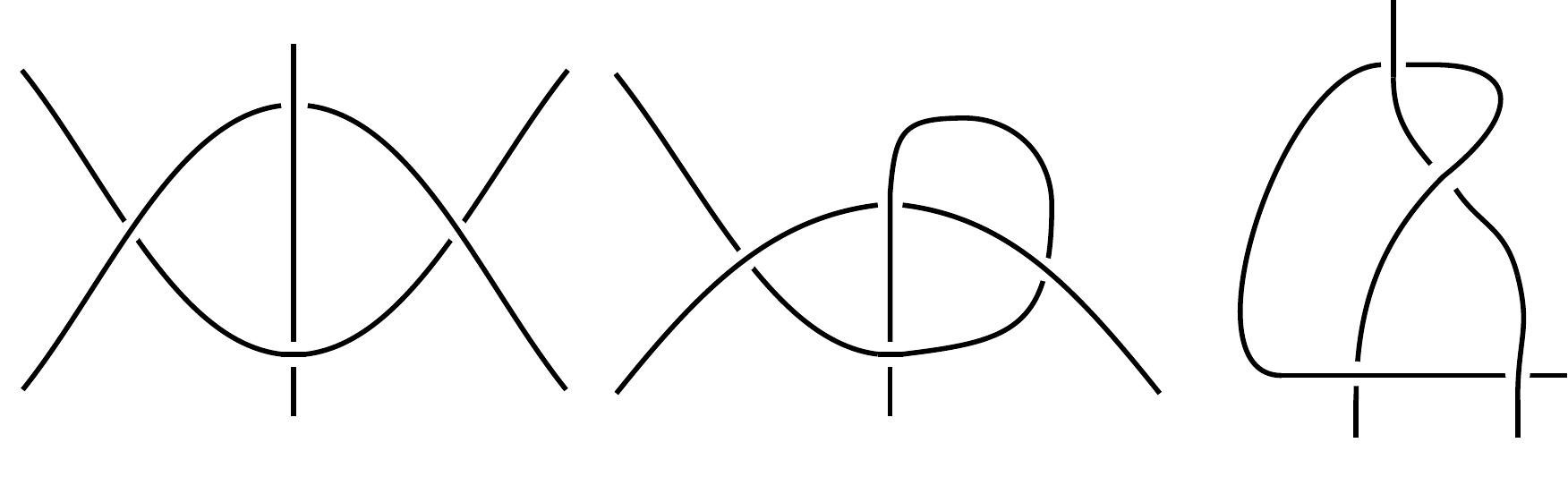
  \caption{In each of the Situations we may isotope to get a subtangle as pictured in the neighbourhood of the marked crossing.}
  \label{fig:localsubgraphs}
\end{figure}

\subsection{Changing marked crossings}\label{sec:smallerdiagram}
Now we study the effects of changing a marked crossing. We have already considered the case of multiple marked crossings in Lemma~\ref{lem:manymarkedcrossings}, so we suppose there is a single marked crossing, $c$. By the results of Section \ref{sec:singlemarkedc}, we may assume that we have flyped so that the neighbourhood of the marked crossing is as shown in Figure \ref{fig:localsubgraphs}. Let $K'$ be the knot obtained by changing the marked crossing. This crossing change gives an almost-alternating diagram $D'$. In Situation A1, there is an obvious untongue move and in Situations A2 and B there are obvious untwirl moves. Let $\widetilde{D}'$ be the new almost-alternating diagram obtained by performing these moves. There is a crossing, $\tilde{c}$, in $\widetilde{D}'$, such that changing $\tilde{c}$ gives an alternating diagram $\widetilde{D}$. In each case, the isotopies and crossing changes suggest how the embedding of $\Lambda_D$ can be modified to given an embedding of $\Lambda_{\widetilde{D}}$ into $\mathbb{Z}^{r+2}$. This is illustrated in Figures \ref{fig:sitA1embeddings}, \ref{fig:sitA2embeddings} and \ref{fig:sitBembeddings}.

These new embeddings will be used in the following lemma which will provide the induction step in a proof that $K'$ is the unknot.
\begin{lem}\label{lem:inductstep}
The diagram $\widetilde{D}$ is reduced and $\Lambda_{\widetilde{D}}$ can be embedded as a change-maker lattice with $\tilde{c}$ as a marked crossing.
\end{lem}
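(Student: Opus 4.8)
The plan is to verify the claim of Lemma~\ref{lem:inductstep} separately in each of the three local configurations (Situations A1, A2, and B) established by Proposition~\ref{prop:singlecrossingsummary}, using the explicit new embeddings of $\Lambda_{\widetilde{D}}$ into $\mathbb{Z}^{r+2}$ indicated in Figures~\ref{fig:sitA1embeddings},~\ref{fig:sitA2embeddings} and~\ref{fig:sitBembeddings}. In each case $\widetilde{D}$ is a genuinely smaller alternating diagram obtained from $D$ by a crossing change followed by an untongue or untwirl move (these reduce the crossing number), together with changing the crossing $\tilde c$. The strategy is: first confirm that $\widetilde{D}$ is reduced (no nugatory crossings), then exhibit a concrete sublattice of $\mathbb{Z}^{r+2}$ isomorphic to $\Lambda_{\widetilde{D}}$ and identify it as a change-maker lattice, and finally check that under this identification $\tilde c$ sits between two vertices of the form $v_1'+e_0+e_{-1}$ and $v_2'-e_0-e_{-1}$ with $v_1'\cdot e_0=v_2'\cdot e_0=0$, so that $\tilde c$ is marked.

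First I would treat reducedness. Since $D$ is reduced, $\Gamma_D$ is $2$-connected and has no cut-edges, so $\Lambda_D$ is indecomposable with no norm-$1$ vectors by Lemma~\ref{lem:indecomp}. The untongue and untwirl moves only remove the small local features visible in Figure~\ref{fig:localsubgraphs}; I would argue that the corresponding modification of $\Gamma_D$ (deleting the marked edge and contracting or deleting the edges involved in the move) cannot create a cut-edge or self-loop, because the new white graph $\Gamma_{\widetilde D}$ is obtained from the $2$-connected graph $\Gamma_D$ by operations localized at the triangle $v_1,w$ and the adjacent vertex. Equivalently, I would verify via the new embedding that $\Lambda_{\widetilde D}$ contains no norm-$1$ vector, and then invoke Lemmas~\ref{lem:indecomp} and~\ref{lem:2connectgraphlat} to conclude $\Gamma_{\widetilde D}$ is $2$-connected with no cut-edges, hence $\widetilde D$ is reduced.

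The central step is to recognise the new sublattice as a change-maker lattice. Here I would use Lemma~\ref{lem:sufficentCMcondition}: it suffices to produce a basis of $\Lambda_{\widetilde D}$ consisting of vectors of the form $w_s=-e_s+e_{s-1}+\sum_{i\in A_s}e_i$ with $A_s\subseteq\{-1,\dotsc,s-2\}$ and $-1\in A_s\Leftrightarrow 0\in A_s$. The standard basis $\{v_1,\dotsc,v_r\}$ of the original change-maker lattice $L$, combined with the explicit special forms obtained after our final flypes --- namely $w=e_g-e_{g-1}-\dotsb-e_{-1}$ in the tight case and $u_1=-e_h+e_{h-1}+\dotsb+e_1$ in the slack case from Lemma~\ref{lem:tightslackflyping} --- should furnish, after deleting the marker vertices and reindexing, exactly such a standard-looking basis for the ambient lattice of $\widetilde D$. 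I would check each of the three situations by writing down the images of the vertices of $\Gamma_{\widetilde D}$ under the modified embedding and confirming they satisfy the hypotheses of Lemma~\ref{lem:sufficentCMcondition}. Finally, the marked-crossing property of $\tilde c$ should be transparent from the new labels, since the move is designed precisely so that $\tilde c$ again separates regions labelled $\pm(e_0+e_{-1})$ plus a vector orthogonal to $e_0$.

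The main obstacle I anticipate is the change-maker verification in the slack case (Situation A1 and A2), where the reindexing of the basis is more delicate: after performing the untongue move the roles of $u_1$ and $u_2=v_2$ must be repackaged into the new standard basis, and one must confirm that the inductive drop in rank is matched by a genuine change-maker vector $\widetilde\sigma$ on $\mathbb{Z}^{r+1}$ rather than merely a positive-definite lattice. Situation B, with the extra edge between $u$ and $v_1$ giving $u\cdot e_1=2$, will require care in checking the condition $-1\in A_s\Leftrightarrow 0\in A_s$ for the modified basis vector that absorbs $u$. Verifying that no norm-$1$ vector is introduced --- equivalently that the new $\widetilde\sigma$ has all entries $\geq 1$ --- is the step most likely to need a short separate argument in each situation, but it should follow from the special forms of $w$ and $u_1$ secured in Lemma~\ref{lem:tightslackflyping}.
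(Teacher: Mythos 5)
Your proposal follows essentially the same route as the paper: case analysis over Situations A1, A2 and B using the new embeddings of Figures~\ref{fig:sitA1embeddings}--\ref{fig:sitBembeddings}, with Lemma~\ref{lem:sufficentCMcondition} as the key tool for recognising the modified lattice as an indecomposable change-maker lattice (which also disposes of reducedness via the absence of norm-one vectors). The one mechanism worth making explicit when you execute the plan is the paper's device of writing each standard basis vector $v_s$ as a sum of vertices via irreducibility and Lemma~\ref{lem:irreducible}, then substituting the modified vertices into those sums; the sign constraints $v_s\cdot e_0, v_s\cdot e_1\geq 0$ are what force the resulting $\tilde v_s$ to have the coefficients required by Lemma~\ref{lem:sufficentCMcondition}.
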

\begin{proof}
Let $S = \{v_1, \dotsc, v_r\} \subseteq \mathbb{Z}^{r+2}$ be a standard basis for $L$. Since each element of $S$ is irreducible, Lemma~\ref{lem:irreducible} implies that it can be written as a sum of vertices. Write
$$v_s=\sum_{x \in B_s}x,$$
where $B_s \subseteq V_D$.

Situations A1, A2, and B are treated separately, although for each one the proof is very similar. We will modify each set $B_s$ to give a new set $\widetilde{B}_s$ of vertices of $\widetilde{D}$. We will then observe that a subset of the collection $\{\tilde{v}_s:=\sum_{x\in \widetilde{B}_s}x\}$ satisfy the hypotheses of Lemma~\ref{lem:sufficentCMcondition} to get the desired conclusion.
\paragraph{Situation A1} If we write $w=-e_1 + \tilde{w}$ and $u_1=e_1 + \tilde{u}_1$, then $\Lambda_{\widetilde{D}}$ has an embedding into $\mathbb{Z}^{r+2}$ with vertices $V_{\widetilde{D}}$ obtained by replacing $\{v_1,w,u_1,u_2\}$ in $V_D$ by $\{v_1+u_2, \tilde{w}, \tilde{u}_1\}$. (See Figure~\ref{fig:sitA1embeddings})
\begin{figure}[h]
  \centering
  \def\svgwidth{300pt}
  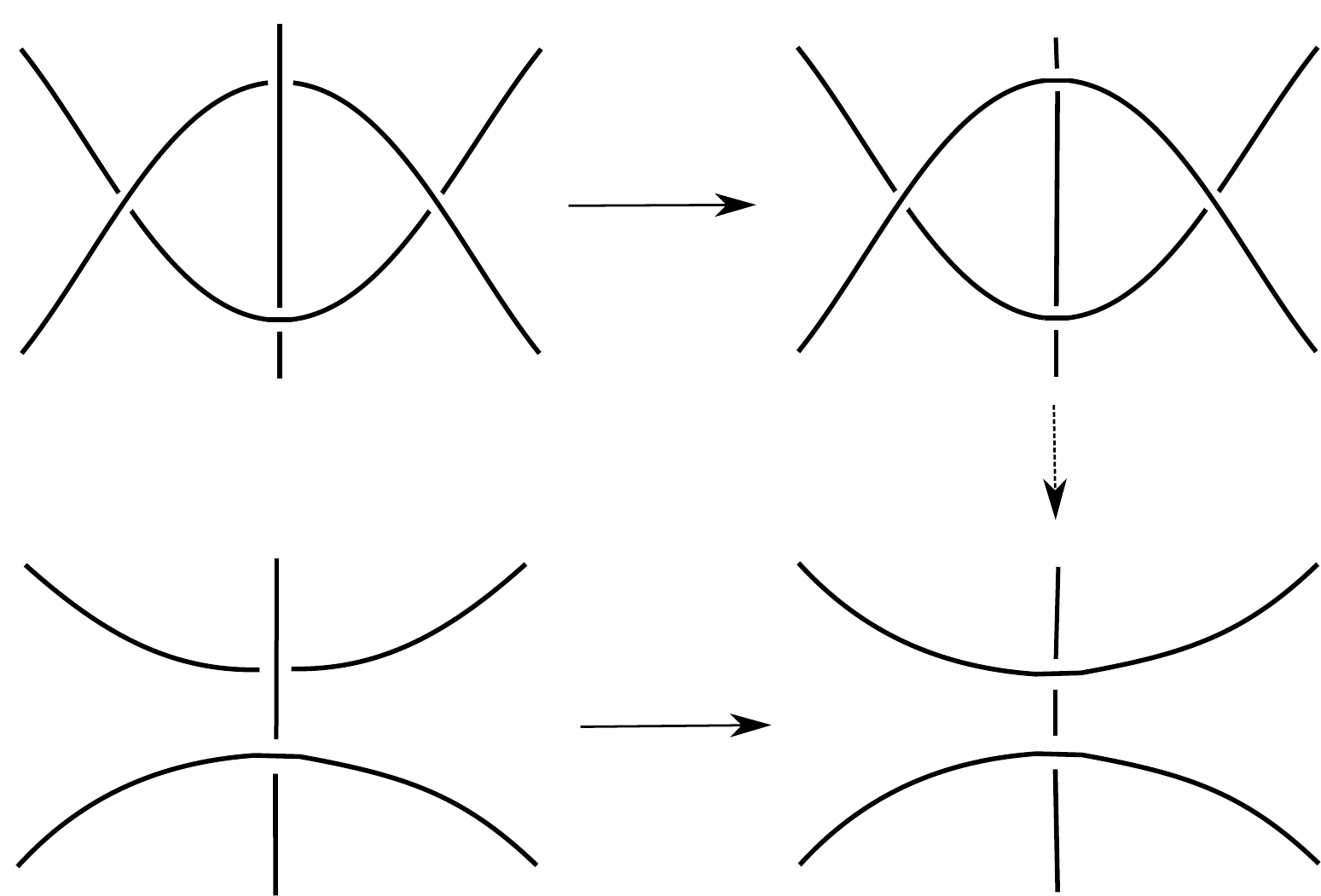
  \caption{An illustration of the embeddings for $\widetilde{D}$ in Situation A1}
  \label{fig:sitA1embeddings}
\end{figure}

\paragraph{} For $s\geq 2$, we obtain $\widetilde{B}_s$ by modifying $B_s$ in the following way. Replace any occurrences of $w$ by $\tilde{w}$, of $u_2$ by $v_1+u_2$ and of $u_1$ by $\tilde{u}_1$ and if $v_1\in B_s$, then discard it. Now define
$$\tilde{v}_s=\sum_{x\in \widetilde{B}_s}x.$$

By the definition of a standard basis, we have $v_s\cdot e_1, v_s\cdot e_0\geq0$ for $s\geq 2$. So, by considering the coefficients of the vertices of $\Lambda_D$, we see that $w\in B_s$ implies $v_1,u_1,u_2 \in B_s$. Similarly, for $s\geq 2$, $v_s\cdot e_0=1$ only if $v_s\cdot e_1=1$. So $v_1\in B_s$ implies $u_1,u_2\in B_s$. It follows that $\tilde{v}_2=-e_2+e_0+e_{-1}$ and, for $s\geq 3$,
 \[
\tilde{v}_s\cdot e_i
  \begin{cases}
   =v_s\cdot e_i            & \text{if } i\geq2 \\
   = 0       & \text{if } i=1\\
   \in\{0,1\}  & \text{if } i=0.
  \end{cases}
\]
We also have that $\tilde{v}_s\cdot e_0=\tilde{v}_s\cdot e_{-1}$. Therefore applying Lemma~\ref{lem:sufficentCMcondition} to $\{\tilde{v}_2, \dotsc , \tilde{v}_r\}$ shows that $\Lambda_{\widetilde{K}}$ embeds into
$$\langle e_{-1}, e_0, e_2, \dotsc , e_r\rangle \cong \mathbb{Z}^{r+1}$$
as an indecomposable change-maker lattice with $\tilde{c}$ as the marked crossing.

\paragraph{Situation A2} If we write $w=-e_1 + \tilde{w}$, then $\Lambda_{\widetilde{D}}$ has an embedding into $\mathbb{Z}^{r+2}$ with vertices $V_{\widetilde{D}}$ obtained by replacing $\{v_1,w,u_1,u_2\}\subseteq V_D$ by $\{v_1+u_1,\tilde{w}\}$. (See Figure~\ref{fig:sitA2embeddings})

\begin{figure}[h]
  \centering
  \def\svgwidth{300pt}
  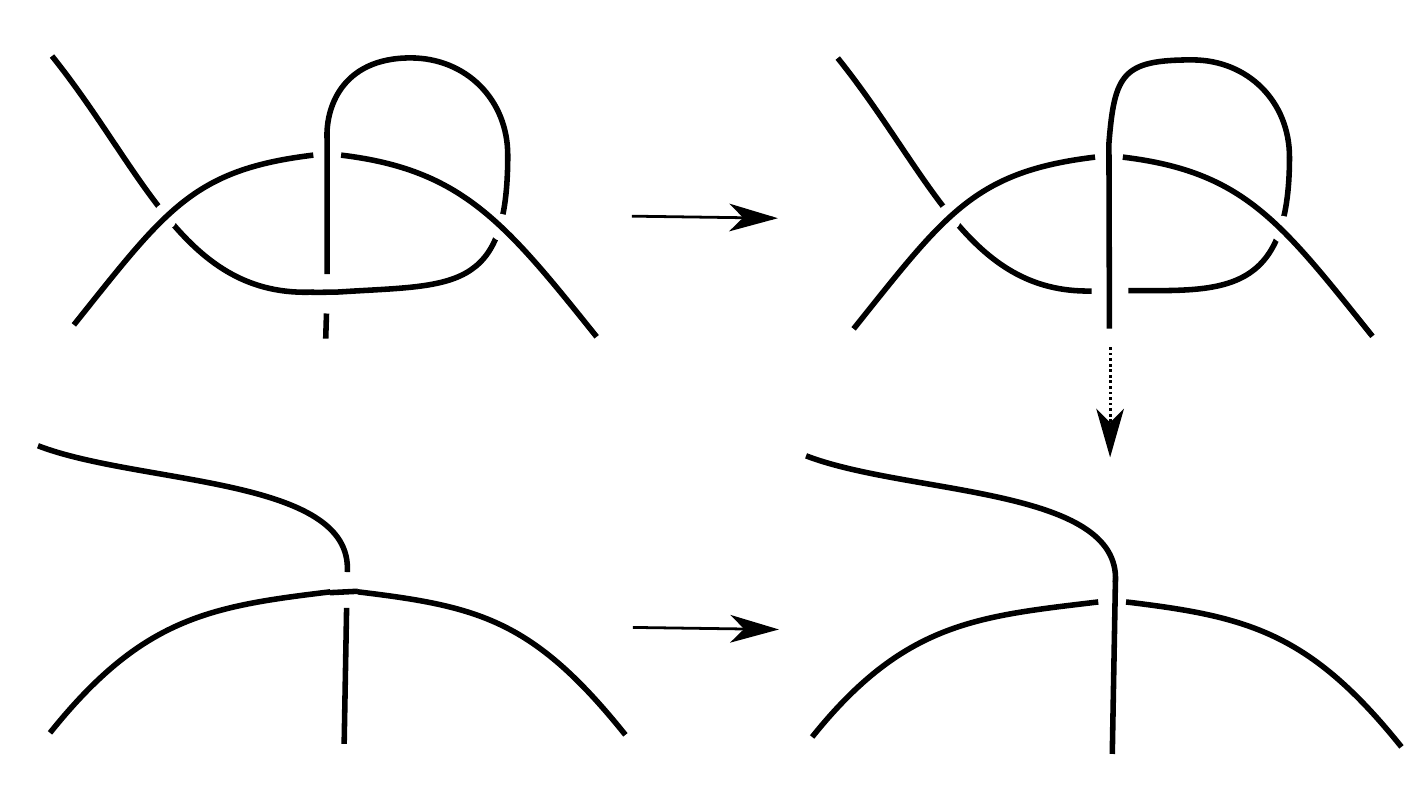
  \caption{An illustration of the embeddings for $\widetilde{D}$ in Situation A2}
  \label{fig:sitA2embeddings}
\end{figure}

For $s\geq 3$, we obtain $\widetilde{B}_s$ by modifying $B_s$ in the following way. Replace any occurrences of $w$ by $\tilde{w}$, of $u_1$ by $v_1+u_1$ and discard any occurrences of $v_1$ and $u_2$. Now define
$$\tilde{v}_s=\sum_{x\in \widetilde{B}_s}x.$$

By the definition of a standard basis, we have $v_s\cdot e_2, v_s\cdot e_1, v_s\cdot e_0\geq0$ for $s\geq 3$. So, by considering the coefficients of the vertices of $\Lambda_D$, we see that $w\in B_s$ implies $v_1,u_1,u_2 \in B_s$. Similarly, for $s\geq 3$, $v_s\cdot e_0=1$ only if $v_s\cdot e_1=1$. So $v_1\in B_s$ implies $u_1,u_2\in B_s$. It follows that $\tilde{v}_3=-e_3+e_0+e_{-1}$ and, for $s\geq 4$,
 \[
\tilde{v}_s\cdot e_i
  \begin{cases}
   =v_s\cdot e_i   & \text{if } i\geq 3\\
   = 0       & \text{if } i=1,2\\
   \in\{0,1\}  & \text{if } i=0.
  \end{cases}
\]
We also have that $\tilde{v}_s\cdot e_0=\tilde{v}_s\cdot e_{-1}$. Therefore applying Lemma~\ref{lem:sufficentCMcondition} to $\{\tilde{v}_3, \dotsc , \tilde{v}_r\}$ shows that $\Lambda_{\widetilde{K}}$ embeds into
$$\langle e_{-1}, e_0, e_3, \dotsc , e_r\rangle \cong \mathbb{Z}^{r}$$
as an indecomposable change-maker lattice with $\tilde{c}$ as the marked crossing.

\paragraph{Situation B} Recall that $u\cdot e_1=2$. If we write $w=-e_1 + \tilde{w}$ and $u=e_1 + \tilde{u}$, then $\Lambda_{\widetilde{D}}$ has an embedding into $\mathbb{Z}^{r+2}$ with vertices $V_{\widetilde{D}}$ obtained by replacing $\{v_1,w,u\}\subseteq V_D$ by $\{v_1 + \tilde{u}, \tilde{w}\}$. (See Figure~\ref{fig:sitBembeddings}.)

\begin{figure}[h]
  \centering
  \def\svgwidth{300pt}
  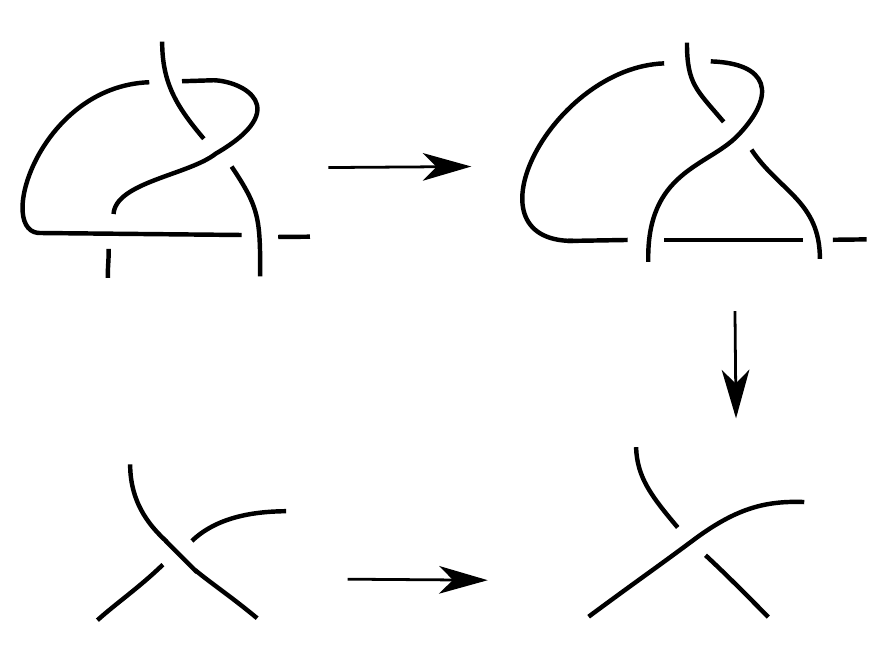
  \caption{An illustration of the embeddings for $\widetilde{D}$ in Situation B}
  \label{fig:sitBembeddings}
\end{figure}

For $s\geq 2$, we obtain $\widetilde{B}_s$ by modifying $B_s$ in the following way. Replace any occurrences of $w$ by $\tilde{w}$ and of $u$ by $v_1+\tilde{u}$. If $v_1\in B_s$, then discard it. Now define
$$\tilde{v}_s=\sum_{x\in \widetilde{B}_s}x.$$

By the definition of a standard basis, we have $v_s\cdot e_1, v_s\cdot e_0\geq0$ for $s\geq 2$. So, by considering the coefficients of the vertices of $\Lambda_D$, we see that $w\in B_s$ implies $v_1,u \in B_s$. Similarly, for $s\geq 2$, $v_s\cdot e_0=1$ only if $v_s\cdot e_1=1$. So $v_1$ or $u$ in $B_s$ implies both $v_1$ and $u$ are in $B_s$. It follows that $\tilde{v}_2=-e_2+e_0+e_{-1}$ and, for $s\geq 3$,
 \[
\tilde{v}_s\cdot e_i
  \begin{cases}
   =v_s\cdot e_i   & \text{if } i\geq2 \\
   = 0       & \text{if } i=1\\
   \in\{0,1\}  & \text{if } i=0.
  \end{cases}
\]
We also have that $\tilde{v}_s\cdot e_0=\tilde{v}_s\cdot e_{-1}$. Therefore applying Lemma~\ref{lem:sufficentCMcondition} to $\{\tilde{v}_2, \dotsc , \tilde{v}_r\}$ shows that $\Lambda_{\widetilde{D}}$ embeds into
$$\langle e_{-1}, e_0, e_2, \dotsc , e_r\rangle \cong \mathbb{Z}^{r+1}$$
as an indecomposable change-maker lattice with $\tilde{c}$ as the marked crossing.
\end{proof}

\subsection{The main results}\label{sec:mainresults}
We are now in a position to prove our main results.

\begin{proof}[Proof of Theorem~\ref{thm:markedmeansunknotting}]
By Lemma \ref{lem:markedcexist}, the embedding gives a marked crossing in $D$. We will prove by induction on the rank of $\Lambda_D$ that any marked crossing is an unknotting crossing. Suppose that $r=1$. As shown in Example~\ref{rem:trefoilexample}, this means $D$ is a minimal diagram of the right-handed trefoil. In particular, when $r=1$ every crossing in $D$ is both marked and an unknotting crossing.

Suppose now that $r>1$. If there is more than one marked crossing, then Lemma~\ref{lem:manymarkedcrossings} shows that any one of them is an unknotting crossing. So consider the case that there is a single marked crossing, $c$. Let $K'$ be the knot obtained by changing $c$. By Proposition~\ref{prop:singlecrossingsummary} and Lemma~\ref{lem:inductstep}, we see that $K'$ can be obtained by changing a marked crossing in a reduced alternating diagram $\widetilde{D}$ with the rank of $\Lambda_{\widetilde{D}}$ strictly less than $r$. Therefore, by the inductive hypothesis, $K'$ is the unknot.
\end{proof}

\begin{proof}[Proof of Theorem~\ref{thm:technical}]
The Montesinos trick, as stated in Proposition~\ref{prop:Montesinos}, gives $(i) \Rightarrow (ii)$. The implication $(iv)\Rightarrow (i)$ is trivial.
\paragraph{} Now we show $(ii)\Rightarrow (iii)$. If $\Sigma(K)=S^3_{-d/2}(\kappa)$, then $-\Sigma(K)=S^3_{d/2}(\overline{\kappa})$. Since $K$ is alternating, the work of Ozsv{\'a}th and Szab{\'o} shows that its branched double cover $\Sigma(K)$ is an $L$-space bounding a positive-definite sharp 4-manifold with intersection lattice isomorphic to $\Lambda_D$ \cite{ozsvath2005heegaard}.  Combining this with Lemma~\ref{lem:deficiencies} and Theorem~\ref{thm:GreeneLspaceversion} implies that $\Lambda_D$ is isomorphic to a change-maker lattice.

\paragraph{} To prove $(iii) \Rightarrow (iv)$, we take the marked crossing guaranteed by Theorem~\ref{thm:markedmeansunknotting}. Applying Lemma~\ref{lem:manymarkedcrossings} and the fact that the unknot has signature 0 shows that the marked crossing is of the required sign.
\end{proof}

\begin{proof}[Proof of Theorem~\ref{thm:main}]
This follows immediately when we observe that if $u(K)=1$, then least one of $K$ or its reflection $\overline{K}$ can be unknotted by a crossing change satisfying the conditions of Theorem~\ref{thm:technical}.
\end{proof}

\section{Almost-alternating diagrams of the unknot}\label{sec:aadiagrams}
In this section we reprove Tsukamoto's characterisation of almost-alternating diagrams of the unknot. It follows from the proof of Theorem~\ref{thm:markedmeansunknotting} that if an almost-alternating diagram of the unknot is obtained by changing a marked crossing, then it can be reduced by a sequence of flypes and untongue and untwirl moves. Thus, it suffices for us to show that any unknotting crossing in a reduced alternating knot diagram can be made into a marked crossing.

\paragraph{}Let $D$ be an alternating diagram with an unknotting crossing $c$, which we assume to be negative if $\sigma(D)=0$, and positive if $\sigma(D)=-2$. Consider the diagram $D'$ obtained by unknotting $c$, introducing a twirl and then changing a crossing back an alternating diagram, as in Figure~\ref{fig:swirlintro}.
\begin{figure}[h]
  \centering
  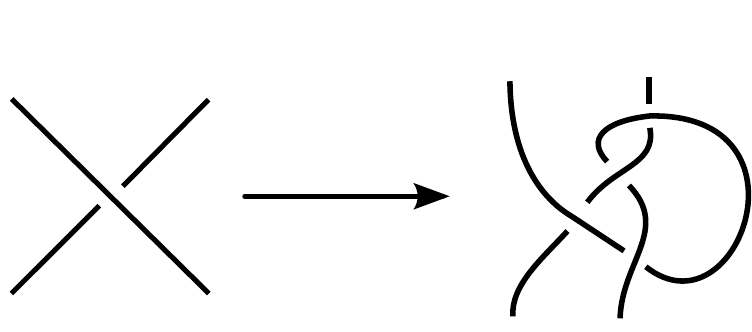
  \caption{Constructing $D'$ from $D$.}
  \label{fig:swirlintro}
\end{figure}
Since $c$ was an unknotting crossing, this has a new positive unknotting crossing $c'$ in the twirl. If $c$ is negative, then $D'$ has 3 new negative crossings. If $c$ is positive, then $D'$ has one new positive crossing. Thus Proposition~\ref{prop:sigformula} shows that $\sigma(D')=-2$.

Now let $D^{(n)}$, be the diagram obtained by iterating the process of unknotting and introducing twirls $n$ times. If the unknotting crossing was originally between region $v_0$ and $w$, then white graph $\Gamma_{D^{(n)}}$ is obtained from $\Gamma_D$ as in Figure~\ref{fig:iteratedswirl} below.
\begin{figure}[h]
  \centering
  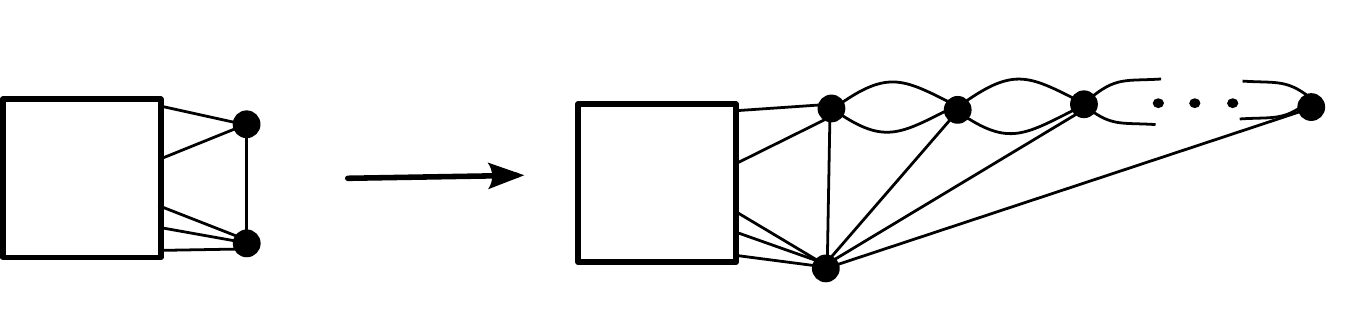
  \caption{Obtaining the white graph of $D^{(n)}$. The new unknotting crossing is between the regions labeled $w$ and $v_n$.}
  \label{fig:iteratedswirl}
\end{figure}

\begin{lem}\label{lem:manyswirls}
For sufficiently large $n$, the diagram $D^{(n)}$ has a unique unknotting crossing. Labelling the regions of $D^{(n)}$ as in Figure~\ref{fig:iteratedswirl}, this unknotting crossing is between the regions regions $v_n$ and $w$.
\end{lem}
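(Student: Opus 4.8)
The plan is to translate the phrase ``unknotting crossing'' into a determinant condition in the white graph $\Gamma_{D^{(n)}}$, and then to exploit the fact that this determinant grows without bound under twirls. Write $M$ for the Goeritz matrix of $D^{(n)}$ in the colouring in which every crossing has incidence $-1$, so that $M$ is positive-definite and $\det M=\det K^{(n)}$ is the number of spanning trees of $\Gamma_{D^{(n)}}$. Since $\sigma(D^{(n)})=-2$, any crossing of $D^{(n)}$ whose change yields the unknot must be positive: changing a negative crossing leaves the signature at $-2$ by Lemma~\ref{lem:manymarkedcrossings}, while the unknot has signature $0$. Changing the crossing at an edge $e$ joining white regions $a,b$ replaces $M$ by $M'=M-2P_{ab}$, where $P_{ab}$ is the rank-one form $x\mapsto (x_a-x_b)^2$. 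By Proposition~\ref{prop:sigformula} the change unknots only if $M'$ is again positive-definite with $\det M'=1$, and by the matrix--determinant lemma
\[
\det M' = \det M\,\bigl(1-2R(a,b)\bigr),
\]
where $R(a,b)$ is the effective resistance between $a$ and $b$ in $\Gamma_{D^{(n)}}$. Thus every unknotting crossing of $D^{(n)}$ satisfies the single condition $R(a,b)=\tfrac12\bigl(1-1/\det M\bigr)$; by construction the edge $\{v_n,w\}$ does satisfy it, so the whole content of the lemma is the \emph{uniqueness} of such an edge.

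The key quantitative input is that $\det M=\det K^{(n)}\to\infty$ as $n\to\infty$, which follows by deletion--contraction applied to the gadget inserted by each twirl, as this strictly increases the spanning-tree count. Granting for the moment that the effective resistance of every edge other than $\{v_n,w\}$ is bounded away from $\tfrac12$ for large $n$, we are done: for such an edge
\[
\bigl|\det M'\bigr| = \det M\,\bigl|1-2R(a,b)\bigr| \longrightarrow \infty,
\]
so $\det M'\neq 1$ once $n$ is large and the edge is not an unknotting crossing. To control the resistances I would split the edges into those lying in the fixed subgraph $G$ and those in the twirl tail. For an edge of $G$ the resistance $R^{(n)}(a,b)$ is nonincreasing in $n$ by Rayleigh monotonicity (each twirl only adds vertices and edges), hence converges to a limit; the path edges of the tail have resistance tending to $1$, while the increasingly well-connected internal edges have resistance tending to $0$. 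The only balanced edge, with resistance tending to $\tfrac12$ at exactly the rate $\tfrac12(1-1/\det M)$, is the clasp edge $\{v_n,w\}$ -- precisely the behaviour exhibited by the marked edge of $\mathcal C_m$, where a single edge sits in parallel with a long path.

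I expect the main obstacle to be twofold. First, one must pin down the precise graph that the twirl inserts and verify that $\{v_n,w\}$ is genuinely clasp-like, so that $R(v_n,w)<\tfrac12$ for every $n$: a naive single-edge fan would force $R(v_n,w)$ to a limit strictly above $\tfrac12$, which is inconsistent with $\{v_n,w\}$ being an unknotting crossing, so the gadget must place $\{v_n,w\}$ in parallel with the long path running back through $v_{n-1},\dots,v_0$ and $G$, exactly as in the $\mathcal C_m$ computation. Second, one must exclude the borderline case in which some fixed edge of $G$ has limiting resistance exactly $\tfrac12$; here the mere convergence of $R^{(n)}$ is not enough, and one must show that $\det M\,(1-2R^{(n)})$ cannot remain equal to $1$, that is, quantify the rate at which $R^{(n)}\to\tfrac12$ rather than just its existence. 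Once both points are settled, the condition $R(a,b)=\tfrac12(1-1/\det M)$ singles out $\{v_n,w\}$ for all sufficiently large $n$, which is the assertion of the lemma.
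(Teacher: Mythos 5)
Your reformulation of the unknotting condition via the matrix--determinant lemma, $\det M'=\det M\,(1-2R(a,b))$, is correct and is essentially a change of language from the determinant bookkeeping the paper uses, so the framework is sound. The genuine gap is the case you relegate to the ``twirl tail'': the crossings between $v_s$ and $w$ for $1\leq s\leq n-1$. These do not form a fixed finite set, so you cannot argue edge-by-edge by letting $n\to\infty$, and your quantitative claim that their resistances tend to $0$ is false: since $v_s$ has degree $5$, shorting all other vertices together gives $R(v_s,w)\geq \tfrac15$ for every $n$, and a direct computation on the tail (a unit edge from each $v_i$ to $w$ in parallel with the chain of doubled edges) shows these resistances accumulate at exactly $\tfrac12$ --- the critical value --- which is precisely why this is the delicate case. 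The paper resolves it with a recurrence: writing $d_k$ for the leading principal minors of the Goeritz matrix $M$ obtained by changing the $(v_s,w)$ crossing of $D^{(n)}$, one has $d_k=5d_{k-1}-4d_{k-2}$ along the tail, and the initial condition $d_0-2d_{-1}=1$ is supplied by the fact that this same crossing \emph{does} unknot $D^{(s)}$; induction then gives $d_k>2d_{k-1}>1$ and hence $\det M>1$ for every $n>s$. Some input of this kind --- pinning down the value at $n=s$ using the known unknotting of $D^{(s)}$ and showing it at least doubles with each further twirl --- is unavoidable, and nothing in your proposal supplies it; the assertion that $\{v_n,w\}$ is ``the only balanced edge'' is exactly what has to be proved.

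Two smaller points. The borderline worry you raise for edges of the fixed subgraph $G$ can be closed with tools you already have: if $R^{(n)}(a,b)\geq\tfrac12$ for all $n$ the changed form is never positive definite, and otherwise Rayleigh monotonicity gives $1-2R^{(n)}\geq 1-2R^{(n_0)}>0$ for all $n\geq n_0$, whence $\det M'\geq \det M\,(1-2R^{(n_0)})\to\infty$; finiteness of $E(G)$ then lets one choice of $n$ work for all such edges (this is, in effect, the paper's $d_k=4^kB+A$ computation). The doubled edges between $v_i$ and $v_{i+1}$ are easiest to dispose of diagrammatically, as the paper does: change one, cancel the pair by a Reidemeister II move, and note the resulting alternating diagram has determinant greater than $1$. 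Finally, you do need to justify $\det M\to\infty$; this follows since $\Gamma_{D^{(n)}}$ strictly contains $\Gamma_{D^{(n-1)}}$ together with a new independent cycle, so the spanning-tree count is a strictly increasing sequence of integers.
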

\begin{proof}
Since $\sigma(D^{(n)})=-2$, Proposition~\ref{prop:sigformula} shows any unknotting crossing must be positive and the Goeritz matrix of the diagram obtained by changing it must be positive definite. Consider a diagram $\widetilde{D}$ obtained by changing a crossing which is not incident to any of the regions $v_1, \dotsc, v_n$. By discarding the region $w$, such a diagram has an $(n+r)\times (n+r)$ Goeritz matrix of the form given below
$$M_{n} =
  \begin{pmatrix}
   m_{1,1} & \cdots & m_{1,r} &    &         &    &   \\
   \vdots  & \ddots & \vdots  &    &         &    &   \\
   m_{r,1} & \cdots & m_{r,r} & -2 &         &    &   \\
           &        & -2      & 5  & -2      &    &   \\
           &        &         &-2  &  \ddots & -2 &   \\
           &        &         &    & -2      & 5  & -2\\
           &        &         &    &         & -2 &  3
  \end{pmatrix}$$
We will show that if we have chosen $n$ sufficiently large, then $\det M_n>1$ or $M_n$ is not positive definite. In either case, it follows that $\widetilde{D}$ is not the unknot. Let $d_k$, denote the determinant of the upper left $(r+k) \times (r+k)$-submatrix. By expanding the determinant along the bottom row of the submatrix we see that for $1\leq k \leq n-1$,
$$d_k=5d_{k-1}-4d_{k-2},$$
and
$$d_n=3d_{n-1}-4d_{n-2}.$$
Solving these linear recurrence relations, shows that there are constants $A$ and $B$ depending only on $d_0$ and $d_{-1}$, such that
$$d_k=4^kB+A,$$
for $-1\leq k \leq n-1$ and
$$d_n=2(4^{n-1})B-A.$$
If $M_n$ is positive definite, then $d_k>0$ for all $k$. This implies that
$$d_0+d_n=(2^{2n-1}+1)B>0,$$
and hence that $B>0$. Thus if we have chosen sufficiently large $n$, we have $d_n>1$. Since there are only finitely many possibilities for $\widetilde{D}$, it follows that for large enough $n$ any unknotting crossing of $D^{(n)}$ must be incident to at least one $v_1, \dotsc, v_n$.
\paragraph{} It is clear that a crossing between $v_i$ and $v_{i+1}$ cannot be an unknotting crossing; after changing such a crossing, we may perform a Reidermeister II move to obtain a non-trivial alternating diagram. Thus it remains only to show that the crossing between $v_s$ and $w$ is not an unknotting crossing for $1\leq s \leq n-1$. If we change such a crossing we get a diagram with Goeritz matrix of the form
$$M =
  \begin{pmatrix}
   m_{1,1}     & \cdots & m_{1,r+s-1}     &     &    &        &     &   \\
   \vdots      & \ddots & \vdots          &     &    &        &     &   \\
   m_{r+s-1,1} & \cdots & m_{r+s-1,r+s-1} & -2  &    &        &     &   \\
               &        & -2              & 3   & -2 &        &     &   \\
               &        &                 &-2   &  5 & -2     &     &   \\
               &        &                 &     & -2 & \ddots & -2  &   \\
               &        &                 &     &    & -2     &  5  & -2\\
               &        &                 &     &    &        & -2  &  3\\
  \end{pmatrix}$$
Where the upper left $(r+s)\times(r+s)$-submatrix is a Goeritz matrix for $D^{(s)}$. For $-1\leq k \leq n+r-s$, let $d_k$ be the determinant of the upper left $(r+s+k)\times (r+s+k)$-submatrix. The diagram $D^{(s)}$ can be unknotted by changing the crossing between $v_s$ and $w$. The Goeritz matrix we obtain by changing this crossing is positive and takes the form:
$$M' =
  \begin{pmatrix}
   m_{1,1}     & \cdots & m_{1,r+s-1}     &     \\
   \vdots      & \ddots & \vdots          &     \\
   m_{r+s-1,1} & \cdots & m_{r+s-1,r+s-1} & -2  \\
               &        & -2              & 1   \\

  \end{pmatrix}.$$
Thus, we have  $\det M'=d_0-2d_{-1}=1$.

Now consider the $(r+s+k)\times (r+s+k)$-submatrix of $M$. If we compute the determinant of this submatrix by expanding along the bottom row, we see that
$$d_k=5d_{k-1}-4d_{k-2},$$
for $1\leq k \leq n-s-1$, and
$$\det M=d_{n-s}=3d_{n-s-1}-4d_{n-s-2}.$$
By induction, we see that $d_k>2d_{k-1}>1$ for $0\leq k \leq n-s-1$. Therefore it follows that
$$\det M=3d_{n-s-1}-4d_{n-s-2}>2d_{n-s-2}>1.$$
Thus $M$ cannot be the Goeritz matrix of an unknot diagram. Hence we have shown that for large $n$ the only unknotting crossing lies between $w$ and $v_n$, as required.
\end{proof}

Theorem~\ref{thm:technical} implies that the Goeritz form $\Lambda_{D^{(n)}}$ is isomorphic to a change-maker lattice, and hence has a marked crossing. Since a marked crossing is an unknotting crossing, Lemma~\ref{lem:manyswirls} shows that  for large $n$ the marked crossing is between regions labelled $w$ and $v_n$. Since $D^{(n)}$ is obtained by adding twirls, we can modify this embedding as in Figure~\ref{fig:sitBembeddings} to obtain an embedding of $\Lambda_D$ into $\mathbb{Z}^{r+2}$. Lemma~\ref{lem:inductstep} shows that this new embedding makes $\Lambda_D$ into a change-maker lattice with $c$ as a marked crossing. In summary, this proves the following lemma.

\begin{lem}\label{lem:unknottingismarked}
Let $D$ be an alternating diagram with an unknotting crossing $c$, which is negative if $\sigma(D)=0$, and positive if $\sigma(D)=-2$. Then the Goeritz form $\Lambda_D$ can be embedded into $\mathbb{Z}^{r+2}$ as a change-maker lattice for which $c$ is a marked crossing.
\end{lem}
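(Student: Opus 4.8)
The strategy is to avoid working with $D$ directly — where we have no control over the position of the marked crossing produced by Theorem~\ref{thm:technical} — and instead to pass to the twirled diagrams $D^{(n)}$ of Figure~\ref{fig:iteratedswirl}, whose unknotting crossing can be located precisely, and then transport the resulting change-maker embedding back down to $D$. The sign hypotheses on $c$ are exactly what guarantee $\sigma(D^{(n)})=-2$, as computed just before Lemma~\ref{lem:manyswirls}, so throughout the argument the relevant unknotting crossings are positive and the conventions of Theorem~\ref{thm:technical} apply without reflecting.

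First I would record that each $D^{(n)}$ is a reduced alternating diagram which is unknotted by changing the positive crossing between $v_n$ and $w$; this is immediate from the construction, since changing that crossing undoes the last crossing change and returns a twirled diagram of the unknot. Hence $D^{(n)}$ satisfies condition (iv) of Theorem~\ref{thm:technical}, so condition (iii) holds and $\Lambda_{D^{(n)}}$ is isomorphic to a change-maker lattice. Theorem~\ref{thm:markedmeansunknotting} then furnishes a marked crossing in $D^{(n)}$, which is in particular an unknotting crossing.

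Next I would pin down that crossing using Lemma~\ref{lem:manyswirls}: for $n$ large the diagram $D^{(n)}$ has a \emph{unique} unknotting crossing, namely the one between $v_n$ and $w$. Since a marked crossing must be an unknotting crossing, the marked crossing of our change-maker embedding of $\Lambda_{D^{(n)}}$ is forced to be exactly this crossing. By construction the final twirl presents the white graph near $v_n$ and $w$ in the Situation~B configuration, so the local picture is precisely the one handled by Lemma~\ref{lem:inductstep}.

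Finally I would descend. Applying the Situation~B embedding modification of Figure~\ref{fig:sitBembeddings}, i.e. Lemma~\ref{lem:inductstep}, deletes the twirl vertex $v_n$ and converts the change-maker embedding of $\Lambda_{D^{(n)}}$, marked between $v_n$ and $w$, into a change-maker embedding of $\Lambda_{D^{(n-1)}}$ marked between $v_{n-1}$ and $w$. Iterating this $n$ times strips off all the twirls and returns an embedding of $\Lambda_D=\Lambda_{D^{(0)}}$ as a change-maker lattice whose marked crossing is $c$, as required. The main obstacle is conceptual rather than computational: it is the realization that one should enlarge the diagram by twirls so that Lemma~\ref{lem:manyswirls} can force the otherwise-uncontrolled marked crossing into a known position. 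Granting that, together with the already-proved Lemmas~\ref{lem:manyswirls} and~\ref{lem:inductstep}, the argument is a clean assembly requiring no new calculation.
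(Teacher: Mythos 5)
Your proposal is correct and follows essentially the same route as the paper: pass to the twirled diagrams $D^{(n)}$, use Theorem~\ref{thm:technical} to get a change-maker structure on $\Lambda_{D^{(n)}}$, invoke Lemma~\ref{lem:manyswirls} to force the marked crossing to sit between $v_n$ and $w$, and then strip the twirls off one at a time via the Situation~B modification of Lemma~\ref{lem:inductstep}. The only difference is presentational: you make the $n$-fold iteration of the descent explicit, where the paper compresses it into a single sentence.
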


Now we complete our proof of Tsukamoto's classification of almost-alternating diagrams of the unknot.
\begin{proof}[Proof of Theorem~\ref{thm:tsukamoto}]
It follows from the proof of Theorem~\ref{thm:markedmeansunknotting} that a reduced almost-alternating diagram of the unknot obtained by changing a marked crossing admits a sequence of flypes, untongue and untwirl moves to $\mathcal{C}_m$ or $\overline{\mathcal{C}_m}$ for some non-zero $m$. Lemma~\ref{lem:unknottingismarked} shows that up to reflection any almost-alternating diagram of the unknot may be obtained by changing a marked crossing. Theorem~\ref{thm:tsukamoto} follows since there is a flype from $\overline{\mathcal{C}_m}$ to $\mathcal{C}_{-m}$.
\end{proof}

\section{Further questions}\label{sec:furtherqus}
Finally, we discuss various questions suggested by the results of this paper.

\subsection{Other surgery coefficients}
We now have a diagrammatic characterisation of when the  branched double cover of an alternating knot can be constructed by half-integer surgery on a knot in $S^3$. One may wish to ask whether something similar is true for other surgery coefficients. In this case, more general tangle replacements should replace the role of crossing changes.
\begin{ques}\label{ques:othercoef}
Let $K$ be an alternating knot with alternating diagram $D$ and suppose that $\Sigma(K)$ can be constructed by $p/q$-surgery on a knot in $S^3$ for some $p/q \in \mathbb{Q}$. Can a surgery of this slope be realised by some tangle replacement in $D$?
\end{ques}
Using Gibbons' generalisation of Greene's change-maker theorems \cite{gibbons2013deficiency} and ideas in this paper, it can be shown that Question~\ref{ques:othercoef} has a positive answer for $q>1$, i.e. when the surgery coefficient is not an integer. We will return to this question in future work \cite{mccoy2014noninteger}.

For $q=1$, Question~\ref{ques:othercoef} seems more difficult. This is reflected in the complexity of the combinatorics required by Greene in his solution to the lens space realization problem \cite{GreeneLRP}. This also makes $q=1$ the most interesting case of Question~\ref{ques:othercoef}, since a solution could well provide a new perspective on the lens space realization problem in terms of the 2-bridge links of which they are branched double covers.

\subsection{Unknotting crossings}
Combining Theorem~\ref{thm:markedmeansunknotting} and Lemma~\ref{lem:unknottingismarked} shows that a crossing in a reduced alternating diagram $D$ is an unknotting crossing if, and only if, it is a marked crossing for some isomorphism of $\Lambda_{D}$ or $\Lambda_{\overline{D}}$ to a change-maker lattice. So to find all unknotting crossings is a question of understanding all isomorphisms from $\Lambda_D$ to change-maker lattices. In all known examples, the isomorphism from $\Lambda_{D}$ to a change-maker lattice is essentially unique. This suggests the following conjecture.
\begin{conj}\label{conj:unknottingcrossing} If $K$ is an alternating knot with $u(K)=1$ and has an alternating diagram $D$ that contains more than one unknotting crossing of the same sign, then $K$ is a clasp knot.
\end{conj}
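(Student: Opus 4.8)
The plan is to reduce the conjecture to a uniqueness statement for change-maker embeddings and then attack that statement with the lattice machinery developed above. First I would invoke the characterisation obtained by combining Theorem~\ref{thm:markedmeansunknotting} and Lemma~\ref{lem:unknottingismarked}. For a fixed alternating knot $K$ the sign of an unknotting crossing in a reduced diagram $D$ is determined by $\sigma(K)$ (via the signature computation in Lemma~\ref{lem:manymarkedcrossings}, a negative unknotting crossing forces $\sigma(K)=0$ and a positive one forces $\sigma(K)=-2$), so after possibly passing to $\overline{K}$ every unknotting crossing of the relevant sign is precisely a marked crossing for some isomorphism $\iota\colon\Lambda_D\to\langle\rho,\sigma\rangle^{\bot}\subseteq\mathbb{Z}^{r+2}$. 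Given two distinct unknotting crossings $c_1\neq c_2$ there are two cases. If they are both marked for a single embedding $\iota$, then that embedding has more than one marked crossing and Lemma~\ref{lem:manymarkedcrossings} already forces $D$ to be a clasp diagram, so $K$ is a clasp knot. The whole content of the conjecture therefore lies in the remaining case, where $c_1$ and $c_2$ arise as the marked crossings of two genuinely different embeddings $\iota_1,\iota_2$, and the goal becomes to show that this configuration cannot occur unless $K$ is a clasp knot.

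Equivalently, I would aim to prove the rigidity statement: if $K$ is not a clasp knot, then the isomorphism $\Lambda_D\to\langle\rho,\sigma\rangle^{\bot}$ is unique up to an overall sign and an automorphism of $\mathbb{Z}^{r+2}$ preserving $\sigma$ and $\rho$; in particular the pair of marker vertices, and hence the located marked crossing, is determined by $\Lambda_D$ alone. The strategy is to exploit that $\Lambda_D$ is simultaneously a graph lattice and a change-maker lattice. The discriminant of $\Lambda_D$ equals $\det K$, which pins down $\norm{\sigma}$ and tightly constrains $\sigma$; the irreducible vectors of small norm are intrinsic to $\Lambda_D$ by Lemma~\ref{lem:irreducible}, and the marker vertex can be normalised to the distinguished irreducible vector $v=-e_1+e_0+e_{-1}$ of norm $3$. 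Using the triangle and adjacent-vertex structure isolated in Proposition~\ref{prop:singlecrossingsummary}, one would try to recover the coordinate functionals $x\mapsto x\cdot e_i$, and with them the whole embedding, from the combinatorics of how these low-norm irreducibles sit inside $\Lambda_D$. A natural inductive scheme mirrors Section~\ref{sec:smallerdiagram}: apply the reduction of Lemma~\ref{lem:inductstep} to pass from $D$ to a smaller alternating diagram $\widetilde{D}$, track both embeddings through the reduction, and argue that two inequivalent embeddings of $\Lambda_D$ must descend to two inequivalent embeddings of $\Lambda_{\widetilde{D}}$, bottoming out either at the rank-one trefoil, which has a unique embedding by Example~\ref{rem:trefoilexample}, or at a clasp configuration where multiple marked crossings genuinely appear.

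The hard part will be exactly this uniqueness of the change-maker embedding. It is a lattice-embedding rigidity problem of the same flavour as, and apparently no easier than, the combinatorial analysis Greene carried out in the lens space realisation problem \cite{GreeneLRP}. The difficulty is that the abstract lattice $\Lambda_D$ does not obviously remember the splitting of $\mathbb{Z}^{r+2}$ into coordinate axes, so one must reconstruct the $e_i$, up to sign and permutation compatible with the change-maker condition, purely from intrinsic data such as the set of irreducible vectors and their mutual pairings. The clasp knots are precisely the degenerate locus where this reconstruction fails and several inequivalent marker pairs coexist; isolating that locus cleanly, and ruling out any other source of non-uniqueness, is the crux. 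I would expect the inductive step to be the most delicate point: one must ensure that distinctness of the two embeddings is \emph{preserved} under the flype-and-reduce passage of Lemma~\ref{lem:inductstep} rather than collapsing, and that no coincidental new embeddings are introduced in the smaller diagram.
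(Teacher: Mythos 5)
This statement is stated in the paper as a \emph{conjecture} and is left open there: immediately after stating it the author explains that it ``does not immediately follow from Theorem~\ref{thm:main} and Theorem~\ref{thm:tsukamoto} since there is the possibility that we might be able [to] obtain two different almost-alternating diagrams of the unknot by changing crossings in $D$,'' i.e.\ precisely the possibility of two inequivalent identifications of $\Lambda_D$ with a change-maker lattice. Your proposal reproduces this reduction faithfully --- the case of two unknotting crossings marked for a single embedding is indeed handled by Lemma~\ref{lem:manymarkedcrossings}, and the entire content of the conjecture is the rigidity of the change-maker embedding --- but it does not prove that rigidity. You say so yourself (``the hard part will be exactly this uniqueness''), so what you have written is a correct identification of the open problem, not a proof of the statement. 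There is no argument here that could be checked or completed as it stands.

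Beyond the honest admission, the inductive scheme you sketch has a concrete structural problem. The reduction of Lemma~\ref{lem:inductstep} is not intrinsic to $\Lambda_D$: it is carried out relative to a chosen embedding $\iota$, since it uses the standard basis of $L$, the marker vertices $v_1$ and $w$, and the adjacent vertices of \emph{that} embedding to decide where the untongue or untwirl move is performed. Two inequivalent embeddings $\iota_1,\iota_2$ would in general single out different crossings and hence produce two \emph{different} smaller diagrams $\widetilde{D}_1\neq\widetilde{D}_2$, not two embeddings of one and the same $\Lambda_{\widetilde{D}}$. So the inductive step ``two inequivalent embeddings of $\Lambda_D$ descend to two inequivalent embeddings of $\Lambda_{\widetilde{D}}$'' is not even well posed without substantial further work, and the proposal gives no mechanism for comparing the two descents. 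Until the uniqueness (up to sign and automorphisms of $\mathbb{Z}^{r+2}$ preserving $\sigma$ and $\rho$) of the change-maker embedding for non-clasp knots is actually established, the conjecture remains open, exactly as the paper says.
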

Conjecture~\ref{conj:unknottingcrossing} does not immediately follow from Theorem~\ref{thm:main} and Theorem~\ref{thm:tsukamoto} since there is the possibility that we might be able obtain two different almost-alternating diagrams of the unknot by changing crossings in $D$. Such diagrams would be built up by different sequences of tongue and swirl moves and would identify $\Lambda_D$ with a change-maker lattice in different ways.

Note that there are alternating knots with both positive and negative unknotting crossings.
\begin{example}
If $K$ is an alternating, amphichiral knot with $u(K)=1$, then there is $\kappa\subset S^3$, with
$$S_{-d/2}^3(\kappa)=\Sigma(K)=\Sigma(\overline{K})=S_{d/2}(\overline{\kappa}).$$
So Theorem~\ref{thm:technical} shows that any alternating diagram for $K$ contains both a positive and negative unknotting crossing. There are 6 examples of such knots with 12 or fewer crossings. One of these is the figure eight knot, $4_1$. The others are $6_3,8_9,10_{17},10_{35}$ and $12a_{1273}$ \cite{Knotinfo}.
\end{example}

\begin{example} The knot $8_{13}$ is an alternating knot which has both a negative and a positive unknotting crossing in any alternating diagram. This is an example which is not amphichiral \cite{Knotinfo}.
\end{example}

\subsection{Montesinos knots}\label{sec:montyknots}
Other knots for which the unknotting number has been extensively studied are the Montesinos knots. For definitions of Montesinos knots and some of their properties see \cite{BurdeZieschang}. It has been established that any Montesinos knot with four or more branches cannot have unknotting number one \cite{Boyer98Dehnfilling, Motegi96unlinking}.

For three branched Montesinos knots Torisu gives the following conjectural list \cite{Torisu96note}.
\begin{conj}\label{conj:Torisu}
Let $K$ be a Montesinos knot with three branches. Then $u(K)=1$ if, and only if, $K=\mathcal{M}(0; (p,r),(q,s),(2mn\pm 1, 2n^2))$, where $p,q,r,s,m$ and $n$ are non-zero integers, $m$ and $n$ are coprime and $ps+rq=1$.
\end{conj}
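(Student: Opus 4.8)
The plan is to prove the two directions separately, with the reverse implication being the substantial one. For the ``if'' direction I would verify directly that each member of the family $\mathcal{M}(0;(p,r),(q,s),(2mn\pm1,2n^2))$ can be unknotted by a single crossing change. Changing a crossing inside the third tangle degenerates the parameter $(2mn\pm1)/(2n^2)$ to an integral tangle, and the arithmetic condition $ps+rq=1$ is precisely what forces the resulting sum of the first two rational tangles to be trivial, so the knot becomes the unknot. This direction is geometric and combinatorial, amounting to tracking rational tangle arithmetic through the crossing change, and I expect it to be routine if slightly fiddly.

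For the ``only if'' direction I would run the strategy of this paper in the Seifert fibered setting. If $u(K)=1$, the Montesinos trick (Proposition~\ref{prop:Montesinos}) produces a knot $\kappa\subset S^3$ with $\Sigma(K)=S^3_{-\delta/2}(\kappa)$, a half-integer surgery. Since $K$ is a three-branch Montesinos knot, $\Sigma(K)$ is a small Seifert fibered space which, after orienting appropriately, bounds its canonical positive-definite star-shaped plumbing $X$ (one central vertex with three linear legs coming from the continued fraction expansions of the three rational parameters). The key input is then Theorem~\ref{thm:GreeneLspaceversion}, which is already a half-integer ($q=2$) statement: provided $X$ is sharp and $\Sigma(K)$ is an $L$-space, the intersection lattice of $X$ embeds in $\mathbb{Z}^{r+2}$ as a change-maker lattice $\langle\rho,\sigma\rangle^\bot$ with $\norm{\rho}=2$. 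For the Seifert fibered spaces $\Sigma(K)$ that are not $L$-spaces one would instead invoke the $d$-invariant symmetries of \cite{gibbons2013deficiency} to obtain the same conclusion. This reduces the problem to a purely lattice-theoretic question about star-shaped plumbing lattices.

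The heart of the argument, and the main obstacle, is to classify which star-shaped (three-legged) plumbing lattices admit a change-maker embedding, in the spirit of Greene's analysis of the \emph{linear} lattices arising in the lens space realization problem \cite{GreeneLRP}. I would develop, for the central vertex and the three legs, the analogue of the marked-vertex and standard-form machinery of Sections~\ref{sec:markedcrossings}--\ref{sec:smallerdiagram}, using Lemma~\ref{lem:usefulbound} together with the change-maker condition to pin down the admissible leg lengths and their continued-fraction shapes. I expect the change-maker condition to force each leg to be essentially linear and to constrain the three Seifert invariants so tightly that, translating back through the dictionary between the Seifert invariants of $\Sigma(K)$ and the rational parameters of the Montesinos tangle, exactly the family with third parameter $(2mn\pm1,2n^2)$ and $ps+rq=1$ survives.

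The principal difficulty is precisely this lattice classification: for lens spaces the lattices were linear and Greene's combinatorics was already delicate, whereas here the branching vertex couples the three legs and substantially enlarges the casework, so the clean induction of Section~\ref{sec:smallerdiagram} would have to be replaced by a more elaborate descent that respects the plumbing structure. Secondary technical points are confirming sharpness of $X$ (the star-shaped plumbing has at most one bad vertex, so this should follow from the Ozsv\'ath--Szab\'o criterion) and carefully matching orientations so that the positive-definiteness hypothesis of Theorem~\ref{thm:GreeneLspaceversion} is met after passing to $\overline{\kappa}$ where necessary.
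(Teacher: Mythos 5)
This statement is a \emph{conjecture} in the paper, not a theorem: the paper offers no proof of it, and only observes that Theorem~\ref{thm:main} settles it in the alternating case (because alternating Montesinos knots have alternating Montesinos diagrams and Torisu already verified that the knots on the list are exactly those with an unknotting crossing in a standard Montesinos diagram). Your ``if'' direction is essentially Torisu's observation and is fine. The ``only if'' direction, however, contains gaps that are not merely technical. First, the paper itself flags the central obstruction to your strategy in Section~\ref{sec:montyknots}: for a non-alternating Montesinos knot $K$, only one of $\Sigma(K)$ or $\Sigma(\overline{K})$ is known to bound a sharp positive-definite 4-manifold \cite{ozsvath2003plumbed}. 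Theorem~\ref{thm:GreeneLspaceversion} needs $-Y$ to be the surgered manifold \emph{and} $Y$ to bound the sharp positive-definite $X$; when the crossing change has the ``wrong'' sign relative to the orientation that bounds the sharp manifold, the theorem gives you nothing, so at best you can constrain the sign of a putative unknotting crossing (cf.\ \cite{buck2013pretzel}), not rule out $u(K)=1$. Your remark that sharpness ``should follow'' because the plumbing has at most one bad vertex conflates the negative-definite plumbing bounded by one orientation with the positive-definite sharp filling required by the other; these do not both exist in general.

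Second, even granting the change-maker embedding, the step you describe as ``the heart of the argument''---classifying which three-legged star-shaped plumbing lattices embed as change-maker lattices---is not an adaptation of anything in this paper or in \cite{GreeneLRP}; it is an open combinatorial problem of at least the difficulty of the linear (lens space) case, and nothing in Sections~\ref{sec:markedcrossings}--\ref{sec:smallerdiagram} transfers, since the entire descent there (flypes detected by Lemma~\ref{lem:flype}, untongue/untwirl moves, the induction of Lemma~\ref{lem:inductstep}) is built on the Goeritz graph lattice of an \emph{alternating diagram}, which is a planar graph lattice, not a plumbing lattice. Finally, the fallback to \cite{gibbons2013deficiency} for non-$L$-space $\Sigma(K)$ removes only the $L$-space hypothesis, not the sharpness hypothesis, so it does not repair the first gap. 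In short, what you have is a reasonable research program, with its main obstacles correctly located but unresolved; it is not a proof, and the paper does not claim one.
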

In support of his conjecture, Torisu shows the knots on his list are precisely those admitting unknotting crossings in standard Montesinos diagrams. Since the alternating Montesinos knots admit alternating Montesinos diagrams, Theorem~\ref{thm:main} resolves Conjecture~\ref{conj:Torisu} in the alternating case.

It is natural to wonder if the non-alternating case is also amenable to the methods of this paper. However, if $K$ is non-alternating, then only one of $\Sigma(K)$ or $\Sigma(\overline{K})$ is known to bound a sharp manifold \cite{ozsvath2003plumbed}, so Theorem~\ref{thm:GreeneLspaceversion} can only obstruct the sign of an unknotting crossing, cf. \cite{buck2013pretzel}.

\subsection{The Montesinos trick}
Theorem~\ref{thm:main} provides a converse to the Montesinos trick for alternating knots: an alternating knot has unknotting number one if, and only if, its branched double cover arises as half-integer surgery on a knot in $S^3$. It is natural to ask how far this converse extends.

\begin{ques}\label{ques:montyconverse}
For which classes of knots does the existence of $\kappa \subset S^3$ such that $\Sigma(K)=S^3_{d/2}(\kappa)$ imply that $u(K)=1$?
\end{ques}
Given the discussion in Section~\ref{sec:montyknots}, it seems likely that Question~\ref{ques:montyconverse} has a positive answer for Montesinos knots.

\paragraph{} There are, however, knots for which the converse to the Montesinos trick does not hold. The author is grateful to Liam Watson for suggesting the following examples.
\begin{example}\label{exam:montyconverse}
Let $T_{r,s}$ denote the $(r,s)$-torus knot in $S^3$. Recall that the unknotting number is given by the formula \cite{kronheimer93gauge, rasmussen10khovanov}:
$$u(T_{r,s})=\frac{1}{2}(r-1)(s-1).$$
So the only torus knot with unknotting number one is $T_{3,2}$. However, for odd $q>1$, we have \cite[Proposition 2]{watson2010khovanov}:
$$S^3_{\pm 1/2}(T_{2,q})= \Sigma (T_{q,4q\mp 1}).$$
So for odd $q>1$, torus knots of the form $T_{q,4q\pm 1}$ provide a supply of knots whose branched double cover can be constructed by half-integer surgery and which have unknotting number greater than one.
\end{example}
One conjecture governing Seifert fibred surgeries is the following.
\begin{conj}[Seifert fibering conjecture]\label{conj:seifertfibring}
If non-integer surgery on a knot in $S^3$ is Seifert fibred, then the knot is a torus knot or a cable of a torus knot.
\end{conj}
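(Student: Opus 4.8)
The statement is a well-known open problem, so the realistic goal of a proof proposal is to reduce it to its genuinely hard core. The plan is to organise the argument around the geometric trichotomy for knots in $S^3$ furnished by Thurston's geometrization: every knot is a torus knot, a satellite knot, or a hyperbolic knot. Torus knots already satisfy the conclusion, so only the satellite and hyperbolic cases carry content, and one treats them separately.

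First I would dispose of the satellite case using the JSJ decomposition of the exterior $S^3\setminus K$. A non-integer Seifert fibred filling must interact with the essential companion tori: after filling along a slope $p/q$ with $q\geq 2$, each such torus either compresses or remains essential in the resulting manifold. Since a Seifert fibred space contains only a very restricted family of essential tori (vertical or horizontal with respect to the fibring), one argues that the pattern must be a cable and that the filling slope must be the cabling slope; the companion then inherits a Seifert structure, and an inductive descent through the JSJ pieces forces it to be a torus knot. The main inputs here are Gordon's analysis of surgery on cable knots and the classification of Seifert fibred surgeries on satellite knots in the spirit of Miyazaki--Motegi, together with the fact (essential for the non-integer hypothesis) that the cabling slope is the unique non-trivial slope producing a reducible or Seifert fibred manifold.

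The hard part, and the reason the statement remains a conjecture rather than a theorem, is the hyperbolic case: one must show that a hyperbolic knot admits no non-integer Seifert fibred surgery. By the Cyclic Surgery Theorem \cite{cglscyclic}, exceptional slopes on a hyperbolic knot are tightly constrained and non-integer exceptional slopes lie at distance one from the meridian. The natural strategy is to combine these distance bounds, coming from the $PSL_2(\mathbb{C})$ character variety, with obstructions to a Seifert structure derived from the Alexander polynomial, the knot genus, and Heegaard Floer $d$-invariants of the filling (in the spirit of Theorem~\ref{thm:GreeneLspaceversion}). The obstacle is that none of the currently available estimates is strong enough to eliminate every non-integer Seifert filling of an arbitrary hyperbolic knot: the character-variety norm does not by itself forbid a Seifert slope, and the Floer-theoretic obstructions bite only for $L$-space or very small Seifert fibred fillings. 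Thus the honest plan is to establish the satellite reduction unconditionally and to isolate the hyperbolic case as the single remaining difficulty, which would require a genuinely new constraint on exceptional surgeries beyond what the cyclic-surgery and Floer-theoretic machinery presently provide.
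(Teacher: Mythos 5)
The statement you were asked to prove is not a theorem of the paper at all: it is stated as Conjecture~\ref{conj:seifertfibring}, a well-known open problem, and the paper offers no proof of it. It is invoked only conditionally, in the remark that \emph{if} the conjecture holds, then the torus knots whose branched double covers arise from half-integer surgery are exactly $T_{3,2}$ and the family in Example~\ref{exam:montyconverse}. So there is no proof in the paper to compare yours against, and your own write-up is, by its own admission, not a proof either.

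To be concrete about where the gap lies: your reduction via geometrization to the torus/satellite/hyperbolic trichotomy is the standard framing, and the satellite case is indeed essentially within reach of known technology (Gabai's result that surgery on a knot in a solid torus yielding a manifold containing no essential torus forces the knot to be a core or cable, combined with Gordon's analysis of surgery on cables and the Miyazaki--Motegi classification), although even there your sketch waves at an ``inductive descent through the JSJ pieces'' without verifying that the induction terminates in a torus knot rather than an iterated cable of something hyperbolic. But the hyperbolic case is the entire content of the conjecture, and you explicitly concede that neither the Cyclic Surgery Theorem, nor the character-variety norm bounds of Culler--Gordon--Luecke--Shalen and Boyer--Zhang, nor the Heegaard Floer obstructions in the style of Theorem~\ref{thm:GreeneLspaceversion} suffice to rule out a non-integer Seifert fibred slope on an arbitrary hyperbolic knot. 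A proof proposal that isolates the hard case and then declares it open is a useful survey of the problem, but it does not establish the statement; as of this paper the conjecture remains unproved, and your attempt does not change that.
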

It can be shown that Conjecture~\ref{conj:seifertfibring} implies that the only torus knots whose branched double covers can be constructed by half-integer surgery on knots in $S^3$ are $T_{3,2}$, which is alternating, and those listed in Example~\ref{exam:montyconverse}. Thus it seems likely that any further examples of the failure of the converse to the Montesinos trick cannot be provided by torus knots.

\bibliographystyle{plain}
\bibliography{Unknottingv4}
\end{document}